\documentclass[12pt,reqno]{amsart}
\ifx\pdfsuppressptexinfo\undefined\else\pdfsuppressptexinfo=-1\fi

\usepackage{amsmath,amsthm,amssymb}
\usepackage{enumitem}
\usepackage[hmargin=1.15in,vmargin=1.15in]{geometry}
\usepackage[colorlinks,linkcolor=blue,citecolor=blue,urlcolor=blue]{hyperref}
\usepackage{bookmark}
\setlist[enumerate,1]{label=(\arabic*),font=\normalfont}
\setlist[enumerate,2]{label=(\alph*),font=\normalfont}
\numberwithin{equation}{section}

\newcommand{\F}{{\mathbb F}}
\newcommand{\Fcal}{{\mathcal F}}
\newcommand{\Tcal}{{\mathcal T}}

\DeclareMathOperator{\Aut}{Aut}
\DeclareMathOperator{\Gal}{Gal}
\DeclareMathOperator{\PGU}{PGU}

\theoremstyle{plain}
\newtheorem{theorem}{Theorem}[section]
\newtheorem{corollary}[theorem]{Corollary}
\newtheorem{proposition}[theorem]{Proposition}
\newtheorem{lemma}[theorem]{Lemma}
\newtheorem*{theoremA}{Theorem~A}
\newtheorem*{theoremB}{Theorem~B}
\newtheorem*{theoremC}{Theorem~C}

\theoremstyle{definition}
\newtheorem{definition}[theorem]{Definition}

\theoremstyle{remark}
\newtheorem{remark}[theorem]{Remark}

\begin{document}

\title[Fixed subfields of the Hermitian function fields]
{Explicit equations of Galois subfields of Hermitian function fields with respect to decomposition groups}
\date{}

\author[Liming Ma]{Liming Ma}
\address{School of Mathematical Sciences, University of Science and Technology of China,
Hefei, Anhui 230026, China}
\email{lmma20@ustc.edu.cn}

\author[Yipeng Wang]{Yipeng Wang}
\address{School of Mathematical Sciences, University of Science and Technology of China,
Hefei, Anhui 230026, China}
\email{jisunxingfu@mail.ustc.edu.cn}

\subjclass[2020]{Primary 14G15, 11G20; Secondary 14H37, 11T06, 14H05.}
\keywords{Hermitian function field; maximal curve; fixed subfield;
subspace polynomial; non-Galois cover}

\begin{abstract}
Let $q$ be a prime power and $\mathbb{F}_{q^2}$ be the finite fields of $q^2$ elements. The Hermitian function field $H=\mathbb{F}_{q^2}(x,y)$ defined by
$y^q+y=x^{q+1}$ is a well-known maximal function field with the largest possible genus.  Let $A(P_\infty)$
be the decomposition group of the infinity place $P_\infty$ of $H$ which is the common pole of $x$ and $y$.
For every subgroup $G\le A(P_\infty)$, we construct explicit generators
of Galois subfield $H^G$ of $H$ with respect to $G$ and determine an absolutely irreducible equation defining the smooth affine
plane model for such a Galois subfield.  For $p$-subgroups, the generators can be chosen so that the
defining equation has an additive polynomial on the left-hand side and an $\mathbb{F}_p$-quadratic polynomial on the
right-hand side.  For $q=27$, we can construct a
genus-two subfield $D\subset H$ that is not isomorphic to $H^J$
for any subgroup $J\le\Aut(H)$ from the explicit equations of Galois subfields of the Hermitian function field.
To the best of our knowledge, this is the first example of a maximal function field
covered but not Galois-covered by the same Hermitian function field.
\end{abstract}

\maketitle

\section{Introduction}
\label{sec:intro}

Let \(q=p^f\) be a prime power and $\F_q$ be the finite fields of $q$ elements.
Let $F/\mathbb F_q$ be an
algebraic function field of one variable with the full constant field $\mathbb F_q$ of genus $g$.
By the celebrated the Hasse--Weil theorem, the number $N(F)$ of rational places of $F/\mathbb F_q$ is bounded by
$$q+1-2g\sqrt{q}\le N(F)\leq q+1+2g\sqrt{q}.$$
If $N(F)= q+1+2g\sqrt{q}$, then
 $F$ is said to be maximal. It is clear that $F$ can be maximal only if either $g$ is zero or $q$ is a square.
 Finding maximal function fields with new genera or explicit equations are very important, not only from the number theoretic perspective but also for applications in coding theory and cryptography.

The most famous maximal function field is the
Hermitian function field $H =\mathbb F_{q^2}(x,y)$ defined by \(y^q+y=x^{q+1}\), since it has an extremely large automorphism group \(A=\Aut(H)\simeq \PGU(3,q)\) and the largest possible genus $q(q-1)/2$ for maximal function fields over $\mathbb F_{q^2}$ \cite{Ihara1981}.
Moreover, Hermitian function field is the unique maximal function field with genus $q(q-1)/2$ up to isomorphism \cite{RuckStichtenoth1994}.
For a maximal function field $F/\mathbb F_{q^2}$, Serre proved that any subfield $\mathbb F_{q^2} \subsetneq E \subseteq F$ is maximal as well \cite{Lachaud1987}.
Thus, any fixed subfield of a maximal function field with respect to subgroups of its automorphism group is maximal.
Such an approach to construct maximal function fields via Galois subfields of Hermitian function fields was initiated in
\cite{GarciaStichtenothXing2000} and further studied in \cite{CossidenteKorchmarosTorres2000,AbdonQuoos2004,BassaMaXingYeo2013,MontanucciZini2018,DallaVoltaMontanucciZini2019,MaXing2019,
MontanucciZiniNoFixed2020,MontanucciZini2020}. In particular, the genus of the Galois subfield of the Hermitian function field with respect to any subgroup of the decomposition group of the infinite place was determined by Garcia, Stichtenoth, and Xing in \cite{GarciaStichtenothXing2000}. By analyzing the subgroup structures of decomposition group $A(P_\infty)$, the genera of Galois subfields of Hermitian function fields with respect to subgroups of $A(P_\infty)$ can be determined completely in the odd characteristic case \cite{BassaMaXingYeo2013}.
However, the explicit equations of fixed subfields \(H^G\) of Hermitian function field with respect to \(G\le A(P_\infty)\) are not determined completely.

In \cite{CossidenteKorchmarosTorres2000}, explicit plane models were obtained for prime-degree Galois quotients of Hermitian function fields.
 Some maximal function fields with explicit equations given by additive polynomials were proven to be Galois subfields of the Hermitian function fields in \cite{GarciaKawakitaMiura2006,GarciaOzbudak2007,GarciaTafazolian2008}.
In \cite{GattiKorchmaros2024}, explicit equations of Galois subfields of the Hermitian function field were obtained for subgroups with order $p^2$.
In \cite{DionigiGatti2025}, the authors obtained explicit equations of all Galois covers of the
Hermitian curve with Galois group of order $dp$ where $p$ is the characteristic of $\F_{q^2}$ and $d\neq p$.

In this manuscript, we determine the generators and explicit defining equations of the
Galois subfields \(H^G\) of the Hermitian function field with respect to every subgroup \(G\le A(P_\infty)\) of the decomposition group.

\begin{theoremA}
For every subgroup \(G_0\le A(P_\infty)\), we construct explicit
functions \(T,W\in H\) and an absolutely irreducible polynomial
\(F_{G_0}\in\F_{q^2}[S,Y]\) such that
\[
        H^{G_0}=\F_{q^2}(T,W),\qquad F_{G_0}(T,W)=0.
\]
The polynomial \(F_{G_0}\) defines a smooth affine plane model of
\(H^{G_0}\).  The construction also gives, up to
isomorphism, the fixed field of every \(p\)-subgroup of \(A\).
For \(p\)-subgroups, the equation can be chosen with an additive
left-hand side and an \(\F_p\)-quadratic right-hand side.
\end{theoremA}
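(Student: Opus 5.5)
The plan starts from the explicit description of the decomposition group. Every $\sigma\in A(P_\infty)$ acts by $x\mapsto ax+b$, $y\mapsto a^{q+1}y+ab^qx+c$ with $a\in\F_{q^2}^*$ and $b^{q+1}=c^q+c$. Hence $A(P_\infty)=Q\rtimes C$, where $Q=\{(1,b,c)\}$ is the Sylow $p$-subgroup of order $q^3$ and $C\cong\F_{q^2}^*$ is cyclic of order $q^2-1$.

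For an arbitrary $G_0$, I would write $G_0=G_p\rtimes\langle\tau\rangle$ with $G_p=G_0\cap Q$ normal. This uses Schur--Zassenhaus: $\tau$ generates a complement of order prime to $p$, and after conjugating inside $A(P_\infty)$, which only changes the model by an isomorphism, we may take $\tau=(a,0,0)$ diagonal. I would then compute $H^{G_0}=(H^{G_p})^{\langle\tau\rangle}$ in two stages.

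\textbf{The $p$-part.} The group $G_p$ has the normal subgroup $N=G_p\cap Z(Q)$, whose elements are $(1,0,c)$ with $c$ ranging over an $\F_p$-subspace $V\subseteq\{c:c^q+c=0\}$. Its image $\bar G_p$ in $Q/Z(Q)\cong(\F_{q^2},+)$ is an additive subgroup $U$.

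Let $L_U(X)=\prod_{u\in U}(X-u)$ and $L_V(X)=\prod_{v\in V}(X-v)$ be the subspace polynomials. Then $T:=L_U(x)$ is fixed by $G_p$, since $x\mapsto x+b$ with $b\in U$. The second generator has the form $W:=L_V(y-\phi(x))$, where $\phi$ is an $\F_p$-quadratic polynomial (a sum of terms $x^{p^i+p^j}$) chosen to absorb the cocycle $ab^qx$. Concretely, $\phi$ must satisfy
\[
\phi(x+b)-\phi(x)\equiv b^qx+c_b \pmod V
\]
for each lift $(1,b,c_b)\in G_p$. The solvability of this equation is exactly where the group law, $G_p$ being closed under products and commutators $(1,0,b^qb'-b'^qb)$, enters. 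In particular the commutator values must lie in $V$.

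For the degree count, $[H:\F_{q^2}(x,W)]$ is bounded by $|V|$ from the $y$-part, and $[\F_{q^2}(x,W):\F_{q^2}(T,W)]\le|U|$. This gives $[H:\F_{q^2}(T,W)]\le|G_p|$, with equality since $\F_{q^2}(T,W)\subseteq H^{G_p}$; so $\F_{q^2}(T,W)=H^{G_p}$.

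The relation comes from eliminating $x$ and $y$. Apply the additive polynomial $R$ with $R\circ L_V=X^q+X$ to $W$ to get $R(W)=x^{q+1}-\phi(x)^q-\phi(x)$, and rewrite the right side as a polynomial in $T$. This gives additive $=$ $\F_p$-quadratic in $T$. Absolute irreducibility then follows from $[H:\F_{q^2}(T,W)]=|G_p|$, since $\deg_W$ and $\deg_T$ of the equation match the pole orders of $T$ and $W$ at $P_\infty$, and these are coprime after dividing out common factors.

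\textbf{The tame part.} The diagonal element $\tau$ acts on $T,W$ by scalings $T\mapsto\alpha T$ and $W\mapsto\beta W$. This holds if $U$ and $V$ are $a$-stable, which is forced by normality of $G_p$. Let $m=\operatorname{ord}(\alpha)$. Then $H^{G_0}$ is generated by $T^m$ and a suitable monomial $T^iW$ or $W^{e}$ chosen via a gcd/Bezout argument, and $F_{G_0}$ is obtained by substituting into the $p$-equation. The new generators may have to be taken of the form $T^{k}W$, and the equation rewritten accordingly.

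\textbf{Remaining verifications and the main obstacle.} Smoothness of the affine model I would verify via the Jacobian criterion together with a genus comparison against the Garcia--Stichtenoth--Xing genus formula. Equivalently, check that the plane model has the right number of points at infinity, so that all singularities are accounted for. The statement about arbitrary $p$-subgroups of $A$ follows because every $p$-subgroup of $A$ lies in a Sylow $p$-subgroup, all of which are conjugate to $Q\le A(P_\infty)$.

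I expect the main obstacle to be constructing $\phi$ when $G_p$ is non-abelian or $U$ is not $\F_q$-linear: the quadratic correction must be chosen compatibly for all of $U$ at once. The first thing I would try is to solve the equation coordinatewise in an $\F_p$-basis of $U$, using the subspace-polynomial identities. A second difficulty is the tame step when $\alpha$ and $\beta$ have different orders, where smoothness of the resulting model is delicate.
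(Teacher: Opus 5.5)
\textbf{The second generator does not exist in general.} In your notation ($U$ the translation space, $V$ the central space), the congruence you impose means $\phi(S+b)-\phi(S)=b^qS+k_b$ for $b\in U$, with constants $k_b\in c_b+V$. Expand $\phi(S+b+b')-\phi(S)$ once through $S+b$ and once through $S+b'$. This gives $b'^qb+k_b+k_{b'}=b^qb'+k_b+k_{b'}$, so $b'^qb=b^qb'$ must hold \emph{exactly}. The freedom modulo $V$ sits only in the constants, and they cancel.

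Hence no such $\phi$ exists as soon as $G_p$ is nonabelian. In characteristic $2$ none exists whenever $U\neq0$, since $b'=b$ gives $0=\phi(S+2b)-\phi(S)=b^{q+1}$. Equivalently, $g=(1,b,c)$ has order $4$ with $g^2=(1,0,b^{q+1})$, yet it would act on $H=\F_{q^2}(x,y-\phi(x))$ by a translation of exponent $2$.

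Your ansatz is exactly right for abelian $G_p$ in odd characteristic: the paper's family $L_V(y+cu^2-R(u))$, $u=x/\varrho$, has this shape. But Theorem~A covers every subgroup. For the unitary Heisenberg group, for instance, the invariant is $y^Q+y-x^{Q+1}$, and $x^{Q+1}$ is not of the form $\phi^Q+\phi$.

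\textbf{The missing idea is to apply $L_V$ to the cocycle before solving.} Find $\Theta$ with $\Theta(S+b)-\Theta(S)=\lambda_b(S):=L_V(b^qS+c_b)$ for $b\in U$, and set $W=L_V(y)-\Theta(x)$; this is in general not of the form $L_V(y-\phi(x))$. The compatibility $\lambda_{b+b'}(S)=\lambda_b(S)+\lambda_{b'}(S+b)$ follows from the closure relation $c_{b+b'}\equiv c_b+c_{b'}+b'^qb\pmod V$. The asymmetry $b'^qb-b^qb'$ that obstructed $\phi$ is now harmless because it lies in $\ker L_V=V$. This is where your commutator condition really enters.

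Since $|U|=0$ in $\F_{q^2}$, you cannot average. The paper uses $\sum_{u\in U}(S+u)^{|U|-1}=L_U'$ to write $\Theta(S)=-\frac{1}{L_U'}\sum_{b\in U}\lambda_b(S)(S+b)^{|U|-1}$. For the $\F_p$-quadratic form it builds $\Theta$ from a basis $(b_r)$ of $U$ with coordinate polynomials $\ell_r$, as the sum of three pieces:
\begin{enumerate}
\item the main term $\sum_r\ell_r(S)L_V(b_r^qS)$;
\item the quadratic correction $-\frac12\sum_{r,s}g_{rs}\ell_r\ell_s$ if $p$ is odd, or $\sum_{r<s}g_{rs}\ell_r\ell_s$ if $p=2$, where $g_{rs}=L_V(b_r^qb_s)$;
\item an additive term matching the values $L_V(c_{b_r})$.
\end{enumerate}
A descent lemma then keeps the right-hand side $\F_p$-quadratic as a polynomial in $T$.

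\textbf{Two smaller repairs are also needed.}
\begin{enumerate}
\item In the tame step, a basis-dependent correction only gives $\tau(W)=a^{q+1}W+h(T)$ for some polynomial $h$. You need $\Theta(aS)=a^{q+1}\Theta(S)$, which the basis-free sum satisfies; alternatively, average over $\langle\tau\rangle$, whose order is prime to $p$.
\item Smoothness is not delicate. $\partial F/\partial W$ is the linear coefficient of the additive left-hand side, a nonzero constant, and this survives the passage to the generators $T^m$ and $W/T^r$ with $r\equiv q+1\pmod m$. Absolute irreducibility follows because the equation is monic in $W$ of degree $q/|V|=[H^{G_p}:\F_{q^2}(T)]$ and the constant field is regular. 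Coprimality of pole orders is neither needed nor true in general.
\end{enumerate}
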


Let \(A_1(P_\infty)\) be the normal Sylow \(p\)-subgroup of \(A(P_\infty)\).
For a \(p\)-subgroup of \(A_1(P_\infty)\), subspace polynomials remove the
central translations, and a finite weighted sum solves the remaining
polynomial difference equations.  A construction using a basis of the
translation space gives the \(\F_p\)-quadratic right-hand side in
Theorem~\ref{thm:quadratic_additive_normal_form}.
For an arbitrary subgroup of \(A(P_\infty)\), an explicit conjugation makes
a cyclic complement to its normal Sylow \(p\)-subgroup act diagonally,
after which we take scalar invariants.
The scalar and abelian specializations recover the models in
\cite{CossidenteKorchmarosTorres2000} associated with prime-order subgroups
of \(A(P_\infty)\), as well as the pure scalar models in
\cite[Example~6.3]{GarciaStichtenothXing2000}.

We next describe the abelian \(p\)-subgroups more explicitly.  For
\(U\le A_1(P_\infty)\), let \(V_U\) be the set of translation
parameters in its action on \(x\), and let \(\Delta_U\) consist of the
\(\delta\) such that \(U\) contains the automorphism
\(x\mapsto x,\ y\mapsto y+\delta\).
Put \(\Tcal=\{\delta\in\F_{q^2}:\delta^q+\delta=0\}\), and for an
\(\F_p\)-subspace \(W\subseteq\F_{q^2}\), write
\(B_W(T)=\prod_{w\in W}(T-w)\).  For \(\F_p\)-subspaces
\(E\subseteq\F_q\) and \(\Delta\subseteq\Tcal\), let \(A_E,A_\Delta\)
be the monic additive polynomials determined by
\[
        A_E(B_E(T))=T^q-T,\qquad
        A_\Delta(B_\Delta(T))=T^q+T.
\]
If \(V_U=\{0\}\), then \(U\) is central and its fixed field has equation
\(A_{\Delta_U}(Z)=X^{q+1}\).  The noncentral abelian case is as follows.

\begin{theoremB}
Let \(U\le A_1(P_\infty)\) be noncentral and abelian.
\begin{enumerate}
\item If \(p\) is odd, there are a nonzero \(\F_p\)-subspace
\(E\subseteq\F_q\), a scalar \(c\in\F_q^*\), and an additive polynomial
\(R\in\F_{q^2}[T]\) of degree less than \(q\), with coefficients in
\(\Tcal\), such that, with \(\Delta=\Delta_U\), the field \(H^U\) is
isomorphic to a function field with equation
\[
        A_\Delta(Z)=cA_E(X)^2+R(A_E(X)).
\]

\item If \(p=2\), then \(V_U=\varrho E\) for some
\(\varrho\in\F_{q^2}^*\) and a nonzero \(\F_2\)-subspace
\(E\subseteq\F_q\), and \(\varrho^{q+1}e^2\in\Delta_U\) for every
\(e\in E\).  Conversely, for fixed \(\varrho,E\) and an
\(\F_2\)-subspace \(\Delta\subseteq\F_q\) satisfying
\(\varrho^{q+1}e^2\in\Delta\) for every \(e\in E\), there are exactly
\(\bigl(q/|\Delta|\bigr)^{\dim_{\F_2}E}\) abelian subgroups with
translation space \(\varrho E\) and central space \(\Delta\).
\end{enumerate}
\end{theoremB}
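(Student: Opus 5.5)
We write elements of $A_1(P_\infty)$ as $\sigma_{b,c}\colon x\mapsto x+b,\ y\mapsto y+b^qx+c$, with $c^q+c=b^{q+1}$. The composition law is $\sigma_{b,c}\sigma_{b',c'}=\sigma_{b+b',\,c+c'+b'^q b}$, so the commutator of $\sigma_{b,c}$ and $\sigma_{b',c'}$ is the central element $y\mapsto y+(b^qb'-b'^qb)$. Hence a subgroup $U$ is abelian iff $b^qb'\in\F_q$ for all $b,b'\in V_U$. Here $V_U$ is an $\F_p$-subspace of $\F_{q^2}$, and $U$ is an extension of $V_U$ by $\Delta_U\le\Tcal$.

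For part (1) ($p$ odd), fix a nonzero $b_0\in V_U$. Since $b^qb_0\in\F_q$, we get $V_U\subseteq b_0\F_q$, so $V_U=\varrho E$ with $\varrho=b_0$ and $E\subseteq\F_q$. Rescaling $x\mapsto \varrho^{-1}x$ and $y\mapsto\varrho^{-(q+1)}y$ (an automorphism in $A(P_\infty)$ up to the isomorphism $H\cong\F_{q^2}(x',y')$ with $y'^q+y'=\varrho^{-(q+1)}\cdots$) reduces to translation space $E\subseteq\F_q$, at the cost of changing the constant to $c=\varrho^{q+1}\in\F_q^*$. The plan then has three steps.

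First, take the fixed field of the central part: $Z_0=A_\Delta$-type change $Z=B_\Delta(y)$ satisfies $A_\Delta(Z)=x^{q+1}$ (times the constant). For $e\in E$, $\sigma_e$ acts by $y\mapsto y+ex+c_e$, where $c_e=e^2/2+t_e$ with $t_e\in\Tcal$. The key is that $x^2/2$ satisfies $(x+e)^2/2=x^2/2+ex+e^2/2$, and $(x^2/2)^q+x^2/2=x^{2q}/2+\ldots$. So I set $w=y-x^2/2$, on which $\sigma_e$ acts by $w\mapsto w+t_e$ with $t_e\in\Tcal$. The map $e\mapsto t_e$ is $\F_p$-additive modulo $\Delta$, so it is the restriction of an additive polynomial; I then correct $w$ by $w-L(x)$, where $L$ is an additive polynomial with $L(e)=t_e$ (it is here that $R$, with coefficients in $\Tcal$, will arise). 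One obtains a function $w'=y-x^2/2-L(x)$ invariant under $U$ modulo $\Delta$, and $Z=B_\Delta(w')$ is $U$-invariant.

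Second, put $X=B_E(x)$, which is $U$-invariant, so $[H:\F_{q^2}(X,Z)]=|E||\Delta|=|U|$ and $H^U=\F_{q^2}(X,Z)$.

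Third, compute $A_\Delta(Z)=w'^q+w'$ as a polynomial in $x$. Using $y^q+y=x^{q+1}$, this is a polynomial in $x$ invariant under translation by $E$, hence a polynomial in $X=B_E(x)$. The main obstacle is showing it has the specific form $cA_E(X)^2+R(A_E(X))$. I would argue via degrees and invariance: the expression is invariant under all of $\F_q$-translations up to the $\Tcal$-valued additive part, so it factors through $x^q-x=A_E(X)$. The quadratic term $x^{q+1}-(x^{2q}+x^2)/2=-(x^q-x)^2/2$ gives the $c$-multiple of $A_E(X)^2$, and the $L$-part gives $R(A_E(X))$ after reducing $L$ modulo $x^q-x$, with $\deg R<q$. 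Choosing $L$ appropriately is the delicate point.

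For part (2) ($p=2$), $x^2/2$ is unavailable. The commutator condition still gives $V_U=\varrho E$. The map $\sigma\mapsto b$ then has the squaring relation $\sigma_{b,c}^2=\sigma_{0,\,b^{q+1}}$, forcing $\varrho^{q+1}e^2\in\Delta_U$. For the count, given $\Delta$, choose a basis $e_1,\dots,e_k$ of $E$. For each $e_i$ the admissible $c$ form a coset of $\Tcal$ (which has $q$ elements), and the choices modulo $\Delta$ give $q/|\Delta|$ options. Any choice generates an abelian group (the commutators vanish since $b^qb'\in\F_q$ and characteristic $2$) whose central part is exactly $\Delta$, by the squaring condition. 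Distinct choices give distinct subgroups, yielding $(q/|\Delta|)^{k}$.
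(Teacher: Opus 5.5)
\textbf{Part (1) has a gap at the step that produces the required form of the equation.} Your architecture matches the paper's proof (Theorem~\ref{thm:odd_triangular_closed_model} with Corollary~\ref{cor:odd_abelian_closed_models}): write $V_U=\varrho E$ with $E\subseteq\F_q$, remove the quadratic part of the fibre constants, absorb the remaining $\Tcal/\Delta$-valued additive part by an additive correction $L$, set $X=B_E(x)$, $Z=B_\Delta(w')$, and count degrees. But you stop exactly where the shape with $R$ having coefficients in $\Tcal$ must be produced, and the mechanism you sketch there fails.

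``Reducing $L$ modulo $x^q-x$'' does not turn $-(L(x)^q+L(x))$ into a polynomial in $x^q-x$. Invariance under all $\F_q$-translations is also not automatic for an arbitrary interpolant. If $L$ works and $E\ne\F_q$, then $L_1=L+\beta B_E$ with $\beta^q+\beta\ne0$ still satisfies $L_1(e)=t_e$ on $E$. Yet for $a\in\F_q\setminus E$, the function $L_1(x)^q+L_1(x)$ changes by $(\beta^q+\beta)B_E(a)\ne0$ under $x\mapsto x+a$, so it does not factor through $x^q-x$.

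What is missing is a specific choice of $L$ together with one Frobenius identity.
\begin{enumerate}
\item Lift $e\mapsto t_e+\Delta$ to an $\F_p$-linear map $t\colon E\to\Tcal$ by prescribing values on a basis $e_1,\dots,e_\nu$.
\item Put $L=\sum_r t(e_r)\ell_r$, where $E_r=\langle e_s:s\ne r\rangle_{\F_p}$ and $\ell_r=B_{E_r}/B_{E_r}(e_r)$, as in Lemma~\ref{lem:coordinate_polynomials}.
\item Since $E\subseteq\F_q$, each $\ell_r$ lies in $\F_q[T]$. Since $\Tcal$ is an $\F_q$-subspace, $L$ has all coefficients in $\Tcal$ and $\deg L<|E|\le q$; this is Corollary~\ref{cor:additive_polynomial_representation}.
\item For such $L$ one has $L(S)^q=-L(S^q)$, hence $L(x)^q+L(x)=-L(x^q-x)$.
\end{enumerate}
Your computation then closes at once:
\[
 A_\Delta(Z)=w'^q+w'=-\tfrac12(x^q-x)^2+L(x^q-x)=-\tfrac12A_E(X)^2+L(A_E(X)).
\]
No invariance or degree argument is needed.

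\textbf{Two smaller slips in part (1).} First, your rescaling $x\mapsto\varrho^{-1}x$, $y\mapsto\varrho^{-(q+1)}y$ is conjugation by $[\varrho^{-1},0,0]$. This replaces $\Delta_U$ by $\lambda\Delta_U$ with $\lambda=\varrho^{-(q+1)}\in\F_q^*$, whereas the statement fixes $\Delta=\Delta_U$. The identity $A_{\lambda\Delta}(\lambda^{|\Delta|}T)=\lambda A_\Delta(T)$, which follows from Lemma~\ref{lem:scalar_action}, transports your equation back to $\Delta_U$.

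Second, after that transport the constant is $c=-\varrho^{q+1}/2$, not $\varrho^{q+1}$; it is $-\tfrac12$ in the normalized coordinates. The coefficients of $\varrho^{q+1}L$ still lie in $\Tcal$. More simply, skip the rescaling and work in $H$ with $u=x/\varrho$, $v=y+cu^2-L(u)$ and $c=-\varrho^{q+1}/2$, as the paper does.

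\textbf{Part (2) is correct, and more direct than the paper's argument.} The paper proves existence by splitting $\pi^{-1}(V)/U_\Delta$ as an $\F_2$-vector space. It then parametrizes the subgroups by quadratic refinements $\eta\colon E\to\F_q/\Delta$ of $(e,e')\mapsto\varrho^{q+1}ee'+\Delta$, and counts these as a torsor under $\operatorname{Hom}_{\F_2}(E,\F_q/\Delta)$. You instead prescribe the fibre cosets over a basis and take the generated group, which gives existence and the count at the same time.

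Two points should be made explicit. First, $U_\Delta$ must be among the generators: the $\sigma_{\varrho e_i,c_i}$ alone give central space $\{\varrho^{q+1}e^2:e\in E\}$. Second, ``exactly'' needs the converse: any such $U$ equals $\langle U_\Delta,s(\varrho e_1),\dots,s(\varrho e_k)\rangle$, because this subgroup lies in $U$ and already has order $|E|\,|\Delta|=|U|$.

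The refinement formalism gives a normalized description of every fibre. Your parametrization records only the basis fibres, which is all that the basis-value formula of Section~\ref{sec:wild_fixed_field} uses.
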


In characteristic two, the quadratic refinements of
Proposition~\ref{prop:char_two_quadratic_refinements} parameterize these
subgroups and give the data for the fixed-field equations in
Theorem~\ref{thm:quadratic_additive_normal_form}.
In odd characteristic, the central family and the subfamily \(R=0\)
realize every genus occurring among fixed fields of \(p\)-subgroups of
\(A\), including genus zero
(Corollary~\ref{cor:odd_triangular_realization}).
Section~\ref{sec:closed_families} also treats a family of unitary Heisenberg
subgroups.

We finally ask whether every intermediate function field
\(\F_{q^2}\subsetneq L\subseteq H\) is isomorphic to a Galois subfield \(H^J\)
for some \(J\le A\).
The possibility of a negative answer was highlighted in
\cite[Question~4.5]{BartoliMontanucciTorres2021}, and the general existence
problem was still described as open in
\cite[Remark~2.2]{GattiGhiandoniKorchmaros2025}.
The following result provide an explicit maximal function field covered but not Galois-covered by the same Hermitian function field.

\begin{theoremC}
Let \(H_{27}\) denote the Hermitian function field over \(\F_{27^2}\).  There
is a genus-two subfield \(D\subset H_{27}\) that is not isomorphic to
\(H_{27}^J\) for any subgroup \(J\le\Aut(H_{27})\).
The extension \(H_{27}/D\) is separable of degree \(108\) and is not Galois.
\end{theoremC}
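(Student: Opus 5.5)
The plan has three parts. First, build $D$ as a compositum-type subfield. Second, use genus to pin down which subgroups $J$ could possibly give $D\cong H^J$. Third, rule out each candidate with an isomorphism invariant. Here $q=27$, $p=3$, and $H=H_{27}$ has genus $351$.

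For the construction, I take two Galois subfields $H^{G_1}$ and $H^{G_2}$ with $G_1,G_2\le A(P_\infty)$, chosen so that $G_1$ and $G_2$ do not commute. A natural source is two non-conjugate order-$p$ or order-$p^2$ subgroups inside a non-abelian subgroup of $A_1(P_\infty)$. The explicit equations from Theorem~A and Theorem~B give generators $T_1,W_1$ and $T_2,W_2$. I then take $D$ to be a specific subfield generated by rational functions in these, for example a degree-$2$ or degree-$4$ subfield of a genus-zero or low-genus $H^{G}$ composed with a non-normal coordinate. Concretely, I would aim for
\[
D=\F_{q^2}(u,v),\qquad v^2=f(u),\quad \deg f\in\{5,6\},
\]
where $u,v$ are explicit elements of $H$. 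The degree $[H:D]=108=4\cdot 27$ is then computed from a tower $H\supset H^{G}\supset D$ with $|G|=27$ and $[H^G:D]=4$, using the explicit equation of $H^G$. Separability follows because $108$ is not a power of $3$ times a purely inseparable factor: the degree-$4$ step is coprime to $p$ and $H/H^G$ is Galois. The genus is $2$ by Riemann--Hurwitz applied to $v^2=f(u)$, with $f$ squarefree.

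For the comparison, Riemann--Hurwitz for $H/H^J$ forces $|J|$ into a narrow range, since $700=2\cdot351-2\ge |J|(2\cdot 2-2)$ gives $|J|\le 350$. I would then use the known classification of genus-two quotients of $H_{27}$, that is, subgroups of $\PGU(3,27)$ with $g(H^J)=2$, via the genus formulas of Garcia--Stichtenoth--Xing and Montanucci--Zini. This gives a finite list of conjugacy classes. For each class I compute the equation of $H^J$. If $J$ fixes a point, I use Theorem~A after conjugating into $A(P_\infty)$. Otherwise $J$ fixes a triangle or a conic, or is one of the few maximal-subgroup cases, and I use the known models from the literature.

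To distinguish $D$ from every $H^J$, I compare an isomorphism invariant of genus-two curves, namely the Igusa--Clebsch invariants up to weighted scaling, or equivalently the automorphism group together with the Weierstrass-point configuration. Non-Galoisness of $H/D$ follows from the same step. If $H/D$ were Galois, then $D=H^J$ with $J=\Aut(H/D)$ of order $108$, which would contradict the conclusion just obtained. Alternatively, one can exhibit a place of $D$ whose places above it in $H$ have different ramification indices.

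The hardest step is making the list of genus-two quotients $H^J$ complete, including subgroups not contained in any point stabilizer, and producing their equations. I would first try to bypass most of the list with a coarse invariant such as $|\Aut(D)|$ or the $\F_{q^2}$-isomorphism class of $\mathrm{Jac}(D)$'s splitting. Failing that, I would observe that every genus-two $H^J$ must have $|J|\in\{$divisors of $|\PGU(3,27)|$ with the exact Riemann--Hurwitz balance$\}$, and prune the list by cases.
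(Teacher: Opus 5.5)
Your outline has a genuine gap: it never actually produces $D$, and the mechanisms you suggest for producing it do not work. Composita and intersections of Galois subfields are again Galois subfields, since $H^{G_1}H^{G_2}=H^{G_1\cap G_2}$ and $H^{G_1}\cap H^{G_2}=H^{\langle G_1,G_2\rangle}$, whether or not $G_1$ and $G_2$ commute. A degree-$2$ or degree-$4$ subfield of a genus-zero $H^G$ is rational by L\"uroth's theorem, so it cannot have genus $2$. Aiming for a model $v^2=f(u)$, without a specific $G$ and a specific degree-four subfield of $H^G$, leaves the existence claim unproved, and that claim is the whole content of the theorem.

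The missing idea is to find an intermediate quotient $C=H^G$ with $|G|=27$ whose explicit model exposes an automorphism of order four, and then take its fixed field. The paper uses $G=G_{E,\Delta}$ with $E=\F_3$ and $\Delta$ the zero set of $T^9-T^3+T$. Then $C$ has genus $9$, and after the change of variable $Y=Z-X^6-2X^4$ it has equation $Y^3+Y=2X^{10}+X^2$. The map $\sigma\colon X\mapsto iX$, $Y\mapsto -Y$ with $i^2=-1$ is an automorphism of $C$ of order four. The functions $t=Y/X^2$ and $v=X^4-t^3$ generate $C^{\langle\sigma\rangle}$ and satisfy $v^2=t^6-t+1$. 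This has genus $2$ because the sextic has derivative $-1$, so it is squarefree.

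Your comparison step is also far heavier than needed. You do not need Igusa--Clebsch invariants, automorphism groups, or equations of candidate quotients. By the theorem of Montanucci and Zini on the genus spectrum of $H_{27}$ (cited in the paper), $2$ is not the genus of any Galois quotient $H_{27}^J$. So the finite list of conjugacy classes you propose to assemble is empty, and genus alone gives $D\not\cong H_{27}^J$. The Garcia--Stichtenoth--Xing formulas only cover subgroups of $A(P_\infty)$, so you need this full-spectrum result for $\PGU(3,27)$ rather than GSX.

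Once $D$ is constructed, your degree, separability, and non-Galois arguments via the tower $H\supset H^G\supset D$ are fine and agree with the paper's.
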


The inclusion \(D\subset H_{27}\) is obtained by taking invariants of
an intermediate field from
Theorem~\ref{thm:odd_triangular_closed_model} under an order-four automorphism.

This paper is organized as follows. In Section~\ref{sec:pointed_subgroup_data}, we describe the subgroup data of the decomposition group of infinity place of the Hermitian function field. In Section~\ref{sec:subspace_calculus}, we collect the required properties of subspace polynomials. In Section~\ref{sec:wild_fixed_field}, we  construct explicit generators and defining equations for fixed fields of \(p\)-subgroups, including equations with an \(\mathbb F_p\)-quadratic right-hand side. In Section~\ref{sec:cyclic_scalar_part}, we extend the construction to arbitrary subgroups of \(A(P_\infty)\) by taking scalar invariants. In Section~\ref{sec:closed_families}, we specialize these results to abelian subgroups and a family of unitary Heisenberg subgroups. Finally, we provide the first maximal function field covered but not Galois-covered by the same Hermitian function field in Section~\ref{sec:non_galois_subcover}.

\section{The Sylow \texorpdfstring{\(p\)}{p}-subgroup and its subgroup data}
\label{sec:pointed_subgroup_data}

Let \(H=\F_{q^2}(x,y)\), where \(y^q+y=x^{q+1}\) be the Hermitian function field and let
\(A=\Aut(H)\) be the automorphism group of $H$.  Let \(P_\infty\) be the common pole of \(x\)
and \(y\) and it is easy to see that
\((x)_\infty=qP_\infty\) and \((y)_\infty=(q+1)P_\infty\).  For
\(G\le A\), write \(P_\infty^G\) for the restriction of \(P_\infty\) to
\(H^G\).

We follow \cite{Stichtenoth2009} for algebraic function fields, places,
constant-field extensions, and Galois fixed fields.  Automorphisms act on
functions on the left, so \(\rho(H^G)=H^{\rho G\rho^{-1}}\) for
\(G\le A\) and \(\rho\in A\).  The Hermitian generators \(x,y\) are fixed
throughout; the symbols \(X,Z,T,W\) are used for auxiliary generators and may
be redefined from one construction to another.

\subsection{The decomposition group and its Sylow subgroup}
\label{subsec:decomposition_sylow}

For \(a\in\F_{q^2}^*\) and \(b,c\in\F_{q^2}\), write \([a,b,c]\) for
the transformation
\[
        x\longmapsto ax+b,
        \qquad
        y\longmapsto a^{q+1}y+ab^qx+c.
\]
Such a transformation preserves the Hermitian equation precisely when
\(c^q+c=b^{q+1}\), and distinct triples define distinct
transformations.  By \cite[equation~(2.2)]{GarciaStichtenothXing2000}, every
element of \(A(P_\infty)\) has this form.  Hence
\[
        A(P_\infty)=\{[a,b,c]:c^q+c=b^{q+1}\}.
\]
Put \(\Tcal=\{\delta\in\F_{q^2}:\delta^q+\delta=0\}\) and
\(A_1(P_\infty)=\{[1,b,c]:c^q+c=b^{q+1}\}\).  The following lemma records
the group law and identifies the centre of \(A_1(P_\infty)\).

\begin{lemma}
\label{lem:group_law}
For \([a_i,b_i,c_i]\in A(P_\infty)\), \(i=1,2\), the product is
\[
 [a_1,b_1,c_1][a_2,b_2,c_2]
 =[a_1a_2,\ a_2b_1+b_2,\ a_2^{q+1}c_1+a_2b_2^qb_1+c_2].
\]
For \(g=[1,b,c]\) and \(h=[1,b',c']\), with
\([g,h]=ghg^{-1}h^{-1}\), the inverse and commutator are
\[
        g^{-1}=[1,-b,b^{q+1}-c],
        \qquad
        [g,h]=[1,0,b'^qb-b^qb'].
\]
Consequently,
\(Z(A_1(P_\infty))=\{[1,0,\delta]:\delta\in\Tcal\}\).
\end{lemma}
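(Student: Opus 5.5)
The plan is to compute directly with the transformations as ring endomorphisms of \(H\). An element \([a,b,c]\) is the \(\F_{q^2}\)-automorphism determined by its values on the generators \(x,y\). Automorphisms act on the left, so the product \(\sigma_1\sigma_2\) is determined by \((\sigma_1\sigma_2)(x)=\sigma_1(\sigma_2(x))\), and similarly for \(y\). In other words, I substitute \(\sigma_1(x)\) and \(\sigma_1(y)\) into the expressions \(\sigma_2(x)=a_2x+b_2\) and \(\sigma_2(y)=a_2^{q+1}y+a_2b_2^qx+c_2\).

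\textbf{Product.} For \(x\) this gives
\[
a_2(a_1x+b_1)+b_2=a_1a_2x+(a_2b_1+b_2).
\]
For \(y\) it gives
\[
a_2^{q+1}\bigl(a_1^{q+1}y+a_1b_1^qx+c_1\bigr)+a_2b_2^q(a_1x+b_1)+c_2 .
\]
In this expression:
\begin{itemize}
\item the \(y\)-coefficient is \((a_1a_2)^{q+1}\);
\item the \(x\)-coefficient is \(a_1a_2(a_2^qb_1^q+b_2^q)=(a_1a_2)(a_2b_1+b_2)^q\), which is exactly the shape required by the triple \([a_1a_2,\,a_2b_1+b_2,\,\cdot\,]\);
\item the constant term is \(a_2^{q+1}c_1+a_2b_2^qb_1+c_2\).
\end{itemize}
Because distinct triples give distinct transformations, this identifies the product triple. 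The Hermitian condition for the product triple holds automatically, since the product of automorphisms is again an automorphism.

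\textbf{Inverse.} Specialise to \(a_1=a_2=1\), where the law reads
\[
[1,b,c][1,b',c']=[1,\,b+b',\,c+c'+b'^qb].
\]
To check \(g^{-1}=[1,-b,b^{q+1}-c]\), multiply and obtain constant term
\[
c+b^{q+1}-c+(-b)^qb .
\]
Since \((-b)^q=-b^q\) in every characteristic, this constant is \(0\), so the product is \([1,0,0]\), the identity. I also check that \([1,-b,b^{q+1}-c]\) satisfies the Hermitian condition. This uses \(c^q+c=b^{q+1}\) and \((b^{q+1})^q=b^{q+1}\), because \(b^{q^2}=b\).

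\textbf{Commutator.} This is the main bookkeeping step. Since the \(b\)-coordinates add, \(ghg^{-1}h^{-1}\) has \(b\)-coordinate \(0\), so it suffices to track the \(c\)-coordinate. I compute \(gh=[1,b+b',c+c'+b'^qb]\) and \(g^{-1}h^{-1}=[1,-b-b',\,\ast]\), then combine them. Alternatively, I use the simpler route \([g,h]=(gh)(hg)^{-1}\), where
\[
hg=[1,b+b',c+c'+b^qb'].
\]
Here \(gh\) and \(hg\) share the same \(b\)-coordinate and their \(c\)-coordinates differ by \(b'^qb-b^qb'\). For two elements \([1,\beta,\gamma]\) and \([1,\beta,\gamma']\) with the same \(\beta\), the product rule gives
\[
[1,\beta,\gamma][1,\beta,\gamma']^{-1}=[1,0,\gamma-\gamma'].
\]
Indeed, the constant term is \(\gamma+\beta^{q+1}-\gamma'+(-\beta)^q\beta=\gamma-\gamma'\). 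This yields \([g,h]=[1,0,b'^qb-b^qb']\).

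\textbf{Centre.} An element \(g=[1,b,c]\) is central exactly when \(b^qb'=b'^qb\) for all \(b'\in\F_{q^2}\); every \(b'\) occurs, since \(c'\) can be chosen with \(c'^q+c'=b'^{q+1}\) by surjectivity of the trace. If \(b\neq0\), take \(b'=\lambda b\) with \(\lambda\notin\F_q\). The condition becomes \(\lambda^q=\lambda\), which is a contradiction. Hence \(b=0\), and then \(c^q+c=0\), that is, \(c\in\Tcal\). Conversely, every \([1,0,\delta]\) with \(\delta\in\Tcal\) is central by the commutator formula.

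The only real obstacle I expect is keeping the order-of-composition convention consistent with the left action, so that \(a_2b_1+b_2\) appears rather than \(a_1b_2+b_1\). The verification of the \(x\)-coefficient above confirms this choice.
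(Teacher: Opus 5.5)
Your proposal is correct and follows the paper's route. Both compute the composition directly under the left-action convention, read off the inverse and commutator from the specialised law, and deduce the centre from the commutator formula. The remaining differences are cosmetic: you test centrality with \(b'=\lambda b\), \(\lambda\notin\F_q\), where the paper first takes \(b'=1\) to get \(b\in\F_q\) and then \(b'\notin\F_q\). Your shortcut \([g,h]=(gh)(hg)^{-1}\) is a clean way to carry out the computation the paper leaves implicit.
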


\begin{proof}
The product, inverse, and commutator formulas follow by direct
computation.  The commutator formula shows
that every \([1,0,\delta]\), with
\(\delta\in\Tcal\), is central.  Conversely, if \([1,b,c]\) is central,
then \(b'^qb-b^qb'=0\) for every \(b'\in\F_{q^2}\).  Taking \(b'=1\)
gives \(b\in\F_q\).  Choosing \(b'\in\F_{q^2}\setminus\F_q\), the
same relation becomes \(b(b'^q-b')=0\), and hence \(b=0\).  Therefore
\(c^q+c=0\), so \(c\in\Tcal\).
\end{proof}

The product formula shows that the scalar map \([a,b,c]\mapsto a\) is
a surjective homomorphism with kernel \(A_1(P_\infty)\).  Hence
\(A_1(P_\infty)\triangleleft A(P_\infty)\) and
\(A(P_\infty)/A_1(P_\infty)\simeq\F_{q^2}^*\).

The map
\(\delta\mapsto\delta^q+\delta\) is the surjective trace map from
\(\F_{q^2}\) to \(\F_q\), so \(\Tcal\) is a one-dimensional
\(\F_q\)-subspace of \(\F_{q^2}\) and \(|\Tcal|=q\).  Since
\(b^{q+1}\in\F_q\), for each \(b\in\F_{q^2}\) the solutions of
\(c^q+c=b^{q+1}\) form a coset of \(\Tcal\).  Thus
\(|A_1(P_\infty)|=q^2|\Tcal|=q^3\), and
\(A_1(P_\infty)\) is a Sylow \(p\)-subgroup of \(A\)
\cite[p.~144]{GarciaStichtenothXing2000}.

\subsection{\texorpdfstring{\(p\)}{p}-subgroup data}
\label{subsec:wild_subgroup_data}

We now attach to each subgroup \(U\le A_1(P_\infty)\) the data used in
the fixed-field construction.  By Lemma~\ref{lem:group_law}, the map
\(\pi\colon A_1(P_\infty)\to(\F_{q^2},+)\), \([1,b,c]\mapsto b\), is a
homomorphism.  Its kernel is
\(\{[1,0,\delta]:\delta\in\Tcal\}\), which equals
\(Z(A_1(P_\infty))\) by Lemma~\ref{lem:group_law}.  Put
\(V=V_U:=\pi(U)\),
\(\Delta=\Delta_U:=\{\delta\in\Tcal:[1,0,\delta]\in U\}\), and
\(U_\Delta:=U\cap\ker\pi\).  Then
\(U_\Delta=\{[1,0,\delta]:\delta\in\Delta\}\).

Both \(V\) and \(\Delta\) are additive subgroups, and hence
\(\F_p\)-subspaces.  We call them the \emph{translation space} and the
\emph{central space} of \(U\), respectively.  They are the spaces
\(V_U\) and \(W_U\) in \cite[equation~(3.4)]{GarciaStichtenothXing2000}.

To record the fibres of \(\pi|_U\), choose a set-theoretic section
\(s\colon V\to U\), \(s(b)=[1,b,c_b]\), with \(c_0=0\).

\begin{proposition}
\label{prop:fibre_decomposition}
The map
\[
 \varphi\colon V\times\Delta\longrightarrow U,\qquad
 (b,\delta)\longmapsto[1,b,c_b+\delta]
\]
is a bijection.  In particular, \(|U|=|V|\,|\Delta|\).
\end{proposition}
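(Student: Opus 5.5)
The plan is a direct fibre-counting argument using the group law of Lemma~\ref{lem:group_law}. First I check that \(\varphi\) lands in \(U\). For \(b\in V\) and \(\delta\in\Delta\), the elements \(s(b)=[1,b,c_b]\) and \([1,0,\delta]\) both lie in \(U\). The product formula with \(a_1=a_2=1\) gives
\[
[1,b,c_b][1,0,\delta]=[1,b,c_b+\delta],
\]
since the cross term \(a_2b_2^qb_1\) vanishes when \(b_2=0\). So \(\varphi(b,\delta)=s(b)[1,0,\delta]\in U\).

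Injectivity follows from the fact, recorded at the start of the subsection, that distinct triples define distinct transformations. If \([1,b,c_b+\delta]=[1,b',c_{b'}+\delta']\), then \(b=b'\), hence \(c_b=c_{b'}\), and therefore \(\delta=\delta'\).

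For surjectivity, let \(u=[1,b,c]\in U\). Then \(b=\pi(u)\in V\), so \(s(b)\in U\), and \(s(b)^{-1}u\in U\cap\ker\pi=U_\Delta\). Rather than appeal to the inverse formula, I compare the \(c\)-coordinates directly. Both \(c\) and \(c_b\) satisfy \(X^q+X=b^{q+1}\), so \(\delta:=c-c_b\in\Tcal\). Using the product computed above,
\[
s(b)[1,0,\delta]=[1,b,c]=u,
\]
so \([1,0,\delta]=s(b)^{-1}u\in U\). This gives \(\delta\in\Delta\) and \(u=\varphi(b,\delta)\).

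The cardinality statement \(|U|=|V|\,|\Delta|\) then follows at once. There is no real obstacle here. The only point needing care is that the section \(s\) is merely set-theoretic, so \(\varphi\) need not be a homomorphism and I should avoid any argument that assumes it is; the bijection is simply the decomposition of \(U\) into the cosets \(s(b)U_\Delta\) of \(U_\Delta=U\cap\ker\pi\).
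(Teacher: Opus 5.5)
Your proof is correct and is essentially the paper's argument: the paper identifies the fibre of \(\pi|_U\) over \(b\in V\) with the coset \(U_\Delta s(b)=\{[1,b,c_b+\delta]:\delta\in\Delta\}\) and concludes from the partition of \(U\) into these fibres. Your separate checks of well-definedness, injectivity (via distinct triples giving distinct transformations) and surjectivity simply unpack that coset decomposition. Whether you write \(s(b)U_\Delta\) or \(U_\Delta s(b)\) is immaterial, since \(U_\Delta\) is central.
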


\begin{proof}
For \(b\in V\), the fibre of \(\pi|_U\) above \(b\) is the left coset
\(U_\Delta s(b)\).  By the product formula in Lemma~\ref{lem:group_law},
\( U_\Delta s(b)=\{[1,b,c_b+\delta]:\delta\in\Delta\} \).
These fibres partition \(U\), so \(\varphi\) is a bijection and
\(|U|=|V|\,|\Delta|\).
\end{proof}

\begin{lemma}
\label{lem:closure_identities}
For all \(b,b'\in V\), the following hold.
\begin{enumerate}
\item \(c_{b+b'}\equiv c_b+c_{b'}+b'^qb\pmod{\Delta}\).
\item \(b'^qb-b^qb'\in\Delta\); equivalently,
\([U,U]\subseteq U_\Delta\).
\end{enumerate}
\end{lemma}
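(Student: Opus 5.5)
Both identities come from the product formula of Lemma~\ref{lem:group_law} applied to the section elements \(s(b)=[1,b,c_b]\) and \(s(b')=[1,b',c_{b'}]\), combined with the fibre description in Proposition~\ref{prop:fibre_decomposition}.

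For (1), we compute the product \(s(b)s(b')\). With \(a_1=a_2=1\), Lemma~\ref{lem:group_law} gives
\[
 s(b)s(b')=[1,\ b+b',\ c_b+b'^qb+c_{b'}].
\]
This element lies in \(U\) because \(U\) is a subgroup. Its image under \(\pi\) is \(b+b'\in V\). By Proposition~\ref{prop:fibre_decomposition}, every element of \(U\) lying over \(b+b'\) has the form \([1,b+b',c_{b+b'}+\delta]\) with \(\delta\in\Delta\). Since distinct triples give distinct transformations, we can compare third coordinates. This yields \(c_b+c_{b'}+b'^qb-c_{b+b'}\in\Delta\), which is (1).

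For (2), we swap the roles of \(b\) and \(b'\) in (1). This gives \(c_{b+b'}\equiv c_{b'}+c_b+b^qb'\pmod\Delta\). Subtracting this from (1) gives \(b'^qb-b^qb'\in\Delta\).

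For the equivalence with \([U,U]\subseteq U_\Delta\), take general elements \(g=[1,b,c]\) and \(h=[1,b',c']\) of \(U\). By Lemma~\ref{lem:group_law}, \([g,h]=[1,0,b'^qb-b^qb']\). This commutator lies in \(U\), since \(U\) is a group, and it also lies in \(\ker\pi\). Hence it is an element of \(U_\Delta=\{[1,0,\delta]:\delta\in\Delta\}\), and this membership is exactly the condition \(b'^qb-b^qb'\in\Delta\).

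Conversely, \([U,U]\) is generated by commutators, and \(U_\Delta\) is a subgroup. So it suffices to know that each commutator \([g,h]\) lies in \(U_\Delta\). By the formula above, this is the condition \(b'^qb-b^qb'\in\Delta\) for all \(b,b'\in V\), which is statement (2) itself. Either formulation therefore gives the other.

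No real obstacle arises. The only point needing care is that congruence modulo \(\Delta\) is meaningful because two elements of \(U\) with the same \(b\) differ by a central element of \(U_\Delta\). This is exactly what Proposition~\ref{prop:fibre_decomposition} provides.
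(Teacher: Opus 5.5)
Your proof is correct and follows the paper's argument. Part (1) is the same product-formula computation of \(s(b)s(b')\) combined with Proposition~\ref{prop:fibre_decomposition}, and part (2) rests on the same commutator identity \([g,h]=[1,0,b'^qb-b^qb']\in U\cap\ker\pi=U_\Delta\). Your extra derivation of (2) from (1), by interchanging \(b\) and \(b'\) and using \(c_{b+b'}=c_{b'+b}\), is also valid and avoids the commutator formula, but the commutator argument you give afterwards makes it unnecessary.
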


\begin{proof}
For part~\textup{(1)}, the product formula in Lemma~\ref{lem:group_law} gives
\[
        s(b)s(b')=[1,b+b',c_b+c_{b'}+b'^qb]
        \in U\cap\pi^{-1}(b+b').
\]
Proposition~\ref{prop:fibre_decomposition} shows that its third coordinate belongs to
\(c_{b+b'}+\Delta\), giving the stated congruence.

For part~\textup{(2)}, let \(g,h\in U\) have translation coordinates
\(b,b'\), respectively.  Lemma~\ref{lem:group_law} gives
\[
        [g,h]=[1,0,b'^qb-b^qb']\in U_\Delta.
\]
Hence \(b'^qb-b^qb'\in\Delta\) for all \(b,b'\in V\), and
\([U,U]\subseteq U_\Delta\).
\end{proof}

\begin{definition}
\label{def:p_subgroup_data}
We call
\(\bigl(V_U,\Delta_U;\{c_b+\Delta_U\}_{b\in V_U}\bigr)\) the
\emph{\(p\)-subgroup data} of \(U\), abbreviated as
\((V,\Delta;c_b+\Delta)\) when \(U\) is fixed.
\end{definition}

By Proposition~\ref{prop:fibre_decomposition}, these data determine \(U\).
They are independent of the chosen section, since changing the section
only changes the representatives \(c_b\) within their cosets.

\section{Subspace polynomials associated with the subgroup data}
\label{sec:subspace_calculus}

We collect the properties of subspace polynomials used in the fixed-field
construction.

A polynomial \(L\in\F_{q^2}[T]\) is \emph{additive} if
\(L(S+T)=L(S)+L(T)\) in \(\F_{q^2}[S,T]\); equivalently,
\(L(T)=\sum_i a_iT^{p^i}\) with \(a_i\in\F_{q^2}\).
For background on additive polynomials, see Ore's paper
\cite{Ore1933} and the finite-field text of
Lidl--Niederreiter \cite[Section~3.4]{LidlNiederreiter1997}.

For a finite \(\F_p\)-subspace \(W\subseteq\F_{q^2}\), its
\emph{subspace polynomial} is
\(B_W(T)=\prod_{w\in W}(T-w)\); see
\cite[Definition~3.5]{BenSassonKopparty2012}.  By definition, \(B_W\) is
monic of degree \(|W|\) and has zero set \(W\).  The following lemma records
the additional properties used below.
\begin{lemma}
\label{lem:subspace_quotient_polynomials}
Let \(W\subseteq\F_{q^2}\) be a finite \(\F_p\)-subspace.
\begin{enumerate}
\item The polynomial \(B_W\) is additive and
\(B_W'\in\F_{q^2}^*\).
\item If \(D\subseteq W\) is an \(\F_p\)-subspace and
\(\Gamma=B_D(W)\), then \(\Gamma\) is an \(\F_p\)-subspace of
\(\F_{q^2}\), \(B_D\) induces an isomorphism \(W/D\to\Gamma\), and
\(B_\Gamma(B_D(T))=B_W(T)\).
\end{enumerate}
\end{lemma}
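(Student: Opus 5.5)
For part~(1), we show additivity by induction on \(\dim_{\F_p}W\), in the standard way for subspace polynomials. If \(W=\{0\}\), then \(B_W(T)=T\). Otherwise write \(W=W_0\oplus\F_p w\) with \(w\notin W_0\). Grouping the roots of \(B_W\) by cosets of \(W_0\) gives
\[
 B_W(T)=\prod_{i\in\F_p}B_{W_0}(T-iw)=\prod_{i\in\F_p}\bigl(B_{W_0}(T)-iB_{W_0}(w)\bigr),
\]
where the second equality uses additivity of \(B_{W_0}\) from the induction hypothesis. Put \(\beta=B_{W_0}(w)\). Then \(\beta\neq0\), because \(w\notin W_0\) and \(W_0\) is the zero set of \(B_{W_0}\). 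The identity \(\prod_{i\in\F_p}(S-i\beta)=S^p-\beta^{p-1}S\) now yields
\[
 B_W(T)=B_{W_0}(T)^p-\beta^{p-1}B_{W_0}(T).
\]
This is a composition of additive polynomials, so \(B_W\) is additive.

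The derivative is handled with the same recursion. Since \(p=0\) in \(\F_{q^2}\), differentiating gives \(B_W'=-\beta^{p-1}B_{W_0}'\). By induction \(B_{W_0}'\) is a nonzero constant, so \(B_W'\) is a nonzero constant as well. An alternative argument is also available: an additive polynomial has constant derivative equal to its linear coefficient, and \(B_W\) is separable because its roots are distinct, so this coefficient is nonzero. The only delicate point in part~(1) is the coset-grouping identity, and it is routine once additivity of \(B_{W_0}\) is in hand.

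For part~(2), we use part~(1) for \(B_D\). Since \(B_D\) is additive, its restriction to \(W\) is an \(\F_p\)-linear map with kernel \(W\cap D=D\). Hence \(\Gamma=B_D(W)\) is an \(\F_p\)-subspace, and \(B_D\) induces an isomorphism \(W/D\to\Gamma\). In particular \(|\Gamma|=|W|/|D|\).

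For the composition identity, \(B_\Gamma(B_D(T))\) is monic of degree \(|\Gamma||D|=|W|\). For every \(w\in W\) we have \(B_D(w)\in\Gamma\), so every \(w\in W\) is a root of \(B_\Gamma(B_D(T))\). We need \(|W|\) distinct roots of a polynomial of degree \(|W|\), and the elements of \(W\) supply them. Therefore \(B_\Gamma\circ B_D=\prod_{w\in W}(T-w)=B_W\). Alternatively, one can argue through separability: by the chain rule the derivative \(B_\Gamma'(B_D)B_D'\) is a nonzero constant, using part~(1).
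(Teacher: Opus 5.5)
Your proposal is correct and follows the paper's proof essentially step for step. Part~(1) uses the same induction via $B_W(T)=\prod_{a\in\F_p}B_{W_0}(T-aw)=B_{W_0}(T)^p-B_{W_0}(w)^{p-1}B_{W_0}(T)$, with the derivative read off from this recursion, and part~(2) uses the same linearity-plus-root-counting argument; your closing separability remark is harmless but unnecessary, since the $|W|$ distinct roots coming from $W$ already determine the monic degree-$|W|$ polynomial $B_\Gamma(B_D(T))$.
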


\begin{proof}
We prove part~\textup{(1)} by induction on \(\dim_{\F_p}W\).  For
\(W=\{0\}\), the assertion follows from
\(B_W(T)=T\).  Choose an \(\F_p\)-subspace \(W_0\subseteq W\) of
codimension one and \(w\in W\setminus W_0\).  Then
\(W=W_0\oplus\F_pw\) and
\(B_W(T)=\prod_{a\in\F_p}B_{W_0}(T-aw)\).
By the induction hypothesis, \(B_{W_0}\) is additive and hence
\(\F_p\)-linear, so
\(B_{W_0}(T-aw)=B_{W_0}(T)-aB_{W_0}(w)\).
The identity \(\prod_{a\in\F_p}(X-aY)=X^p-Y^{p-1}X\) therefore gives
\[
        B_W(T)=B_{W_0}(T)^p
        -B_{W_0}(w)^{p-1}B_{W_0}(T).
\]
Thus \(B_W\) is additive.  Since \(w\notin W_0\),
\(B_{W_0}(w)\ne0\), and differentiation gives
\(B_W'=-B_{W_0}(w)^{p-1}B_{W_0}'\in\F_{q^2}^*\).  This proves
part~\textup{(1)}.

We next prove part~\textup{(2)}.  By part~\textup{(1)}, the restriction
of \(B_D\) to \(W\) is \(\F_p\)-linear with kernel \(D\).  Hence
\(\Gamma\) is an \(\F_p\)-subspace, \(B_D\) induces an isomorphism
\(W/D\to\Gamma\), and \(|\Gamma|=|W|/|D|\).  The monic polynomial
\(B_\Gamma(B_D(T))\) has degree \(|\Gamma||D|=|W|\) and vanishes at
every element of \(W\).  Therefore
\(B_\Gamma(B_D(T))=\prod_{w\in W}(T-w)=B_W(T)\).
\end{proof}

Subspace polynomials also give coordinate polynomials for bases.

\begin{lemma}
\label{lem:coordinate_polynomials}
Let \(W\subseteq\F_{q^2}\) be a nonzero \(\F_p\)-subspace, put
\(n=\dim_{\F_p}W\), and fix a basis
\((w_1,\ldots,w_n)\).  For \(1\le r\le n\), put
\[
 W_r=\langle w_s:s\ne r\rangle_{\F_p},\qquad
 \ell_r(T)=\frac{B_{W_r}(T)}{B_{W_r}(w_r)}.
\]
Then \(\ell_r\) is additive of degree \(p^{n-1}\), and
\[
 \ell_r\!\left(\sum_{s=1}^n a_sw_s\right)=a_r
 \qquad(a_1,\ldots,a_n\in\F_p).
\]
\end{lemma}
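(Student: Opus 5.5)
The plan is to read off both claims from Lemma~\ref{lem:subspace_quotient_polynomials}(1) applied to the codimension-one subspace \(W_r\subseteq W\).

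First, since \((w_1,\ldots,w_n)\) is a basis, \(W_r\) is an \(\F_p\)-subspace of dimension \(n-1\), so \(B_{W_r}\) is monic of degree \(|W_r|=p^{n-1}\). By Lemma~\ref{lem:subspace_quotient_polynomials}(1), \(B_{W_r}\) is additive. Its zero set is exactly \(W_r\), and \(w_r\notin W_r\) by linear independence, so \(B_{W_r}(w_r)\neq0\). Hence \(\ell_r\) is well defined, additive, and of degree \(p^{n-1}\), since rescaling by a nonzero constant changes neither property.

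Next, additivity makes \(\ell_r\) \(\F_p\)-linear: we have \(\ell_r(aT)=a\ell_r(T)\) for \(a\in\F_p\), because \(a\) is a sum of copies of \(1\). Therefore
\[
\ell_r\Bigl(\sum_s a_sw_s\Bigr)=\sum_s a_s\ell_r(w_s).
\]
For \(s\neq r\), \(w_s\in W_r\) is a root of \(B_{W_r}\), so \(\ell_r(w_s)=0\). For \(s=r\), \(\ell_r(w_r)=1\) by the choice of normalization. The sum therefore reduces to \(a_r\).

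None of these steps is a genuine obstacle. The only point needing care is \(B_{W_r}(w_r)\neq0\), which follows from \(w_r\notin W_r\).
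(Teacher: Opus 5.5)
Your proposal is correct and follows essentially the same argument as the paper. Both use Lemma~\ref{lem:subspace_quotient_polynomials}(1) for additivity, the degree \(|W_r|=p^{n-1}\), the nonvanishing \(B_{W_r}(w_r)\neq0\) from \(w_r\notin W_r\), the values \(\ell_r(w_s)=\delta_{rs}\), and then additivity (hence \(\F_p\)-linearity) to obtain the coordinate formula.
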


\begin{proof}
By definition, \(B_{W_r}\) has degree \(p^{n-1}\) and zero set
\(W_r\), while Lemma~\ref{lem:subspace_quotient_polynomials}\textup{(1)}
shows that it is additive.
Thus \(B_{W_r}(w_r)\ne0\), while \(\ell_r(w_r)=1\) and
\(\ell_r(w_s)=0\) for \(s\ne r\).  The asserted coordinate formula
follows from the additivity of \(\ell_r\).
\end{proof}

\begin{corollary}
\label{cor:additive_polynomial_representation}
Every \(\F_p\)-linear map \(\varphi\colon W\to\F_{q^2}\) is represented
on \(W\) by a unique additive polynomial \(R\) of degree less than \(|W|\).
If \(W\ne\{0\}\) and \((w_1,\ldots,w_n)\) is a basis, then
\[
 R(T)=\sum_{r=1}^n\varphi(w_r)\ell_r(T),
\]
where the \(\ell_r\) are as in Lemma~\ref{lem:coordinate_polynomials}.
If \(W\subseteq\F_q\) and \(\varphi(W)\subseteq\Tcal\), then every
coefficient of \(R\) lies in \(\Tcal\).
\end{corollary}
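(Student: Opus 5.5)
Existence comes first.  For \(W=\{0\}\) we take \(R=0\).  For \(W\ne\{0\}\), fix a basis \((w_1,\ldots,w_n)\) and set \(R(T)=\sum_r\varphi(w_r)\ell_r(T)\).  By Lemma~\ref{lem:coordinate_polynomials}, each \(\ell_r\) is additive of degree \(p^{n-1}<p^n=|W|\).  Hence \(R\) is additive of degree less than \(|W|\).  For \(w=\sum_s a_sw_s\) with \(a_s\in\F_p\), the coordinate formula gives
\[
R(w)=\sum_r\varphi(w_r)a_r=\varphi(w),
\]
by \(\F_p\)-linearity of \(\varphi\).  So \(R\) represents \(\varphi\) and has the displayed form.

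For uniqueness, suppose \(R_1,R_2\) are two such polynomials.  Their difference \(R_1-R_2\) has degree less than \(|W|\) and vanishes on all \(|W|\) points of \(W\).  It is therefore the zero polynomial.

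The coefficient statement is the only real step.  Assume \(W\subseteq\F_q\) and \(\varphi(W)\subseteq\Tcal\).  Each \(B_{W_r}\) is a product of factors \(T-w\) with \(w\in\F_q\), so it lies in \(\F_q[T]\).  Also \(B_{W_r}(w_r)\in\F_q^*\).  Hence \(\ell_r\in\F_q[T]\).

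Write \(\ell_r=\sum_i\lambda_{r,i}T^{p^i}\) with \(\lambda_{r,i}\in\F_q\).  Then the coefficient of \(T^{p^i}\) in \(R\) is \(\sum_r\varphi(w_r)\lambda_{r,i}\).  Recall that \(\Tcal\) is an \(\F_q\)-subspace of \(\F_{q^2}\): if \(\delta^q=-\delta\) and \(\lambda\in\F_q\), then \((\lambda\delta)^q=-\lambda\delta\).  Since each \(\varphi(w_r)\in\Tcal\), every such coefficient is an \(\F_q\)-linear combination of elements of \(\Tcal\) and so lies in \(\Tcal\).

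The main point to verify is that the basis-dependent formula involves only \(\F_q\)-rational data.  This holds precisely because \(W\subseteq\F_q\).
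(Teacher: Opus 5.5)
Your proof is correct and follows the paper's argument essentially step for step. You use the basis formula from Lemma~\ref{lem:coordinate_polynomials} for existence, root counting for uniqueness, and the observations that $W_r\subseteq\F_q$ gives $\ell_r\in\F_q[T]$ and that $\Tcal$ is an $\F_q$-subspace for the coefficient claim.
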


\begin{proof}
For \(W=\{0\}\), take \(R=0\).  Otherwise,
Lemma~\ref{lem:coordinate_polynomials} shows that \(R\) is additive, has
degree at most \(p^{n-1}<|W|\), and restricts to \(\varphi\) on \(W\).  If
two additive polynomials of degree less than \(|W|\) agree on \(W\), their
difference has at least \(|W|\) roots and smaller degree, so they are equal.

Finally, suppose that \(W\subseteq\F_q\) and
\(\varphi(W)\subseteq\Tcal\).  Then \(W_r\subseteq\F_q\), so
\(\ell_r\in\F_q[T]\).  Since \(\Tcal\) is an \(\F_q\)-subspace and
\(\varphi(w_r)\in\Tcal\), every coefficient of \(R\) lies in \(\Tcal\).
\end{proof}

We now record two basic instances.  The polynomials \(T^q+T\) and
\(T^q-T\) are monic of degree \(q\) and have zero sets \(\Tcal\) and
\(\F_q\), respectively.  Hence the definition of the subspace polynomial
gives
\begin{equation}
\label{eq:two_basic_subspace_polynomials}
        B_{\Tcal}(T)=T^q+T,
        \qquad
        B_{\F_q}(T)=T^q-T.
\end{equation}

Let \(\Delta\subseteq\Tcal\) be an \(\F_p\)-subspace.  By
Lemma~\ref{lem:subspace_quotient_polynomials}(2),
\(\Gamma_\Delta:=B_\Delta(\Tcal)\) is an \(\F_p\)-subspace of
\(\F_{q^2}\).  Set \(A_\Delta:=B_{\Gamma_\Delta}\).

\begin{proposition}
\label{prop:central_complementary_factor}
The polynomial \(A_\Delta\) is the unique monic additive polynomial
satisfying
\[
        A_\Delta(B_\Delta(T))=T^q+T.
\]
Moreover, \(\deg A_\Delta=q/|\Delta|\),
\(\ker A_\Delta=\Gamma_\Delta\), and \(B_\Delta\) induces an
isomorphism \(\Tcal/\Delta\to\Gamma_\Delta\).
\end{proposition}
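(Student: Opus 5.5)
The plan is to apply Lemma~\ref{lem:subspace_quotient_polynomials}(2) with \(W=\Tcal\) and \(D=\Delta\), and then compare with the first identity in \eqref{eq:two_basic_subspace_polynomials}. Since \(\Tcal\) is an \(\F_p\)-subspace of \(\F_{q^2}\) containing \(\Delta\), part~(2) applies. It says three things:
\begin{itemize}
\item \(\Gamma_\Delta=B_\Delta(\Tcal)\) is an \(\F_p\)-subspace;
\item \(B_\Delta\) induces an isomorphism \(\Tcal/\Delta\to\Gamma_\Delta\);
\item \(B_{\Gamma_\Delta}(B_\Delta(T))=B_{\Tcal}(T)=T^q+T\).
\end{itemize}
Because \(A_\Delta=B_{\Gamma_\Delta}\) by definition, the third point is exactly the displayed identity. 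By Lemma~\ref{lem:subspace_quotient_polynomials}(1), \(A_\Delta\) is monic and additive.

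The remaining properties follow directly. The degree is \(\deg A_\Delta=|\Gamma_\Delta|=|\Tcal|/|\Delta|=q/|\Delta|\), using the isomorphism \(\Tcal/\Delta\cong\Gamma_\Delta\) and \(|\Tcal|=q\). The kernel of \(A_\Delta\) is its zero set, which is \(\Gamma_\Delta\) by the definition of a subspace polynomial. Note that \(A_\Delta\) has distinct roots, since its degree is \(|\Gamma_\Delta|\); this also follows from \(A_\Delta'\neq0\).

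The only step needing a real argument is uniqueness. Suppose \(A\) is any polynomial with \(A(B_\Delta(T))=T^q+T\); monicity and additivity are not even needed here. Then
\[
(A-A_\Delta)(B_\Delta(T))=0
\]
in \(\F_{q^2}[T]\). Since \(B_\Delta\) is nonconstant of degree \(|\Delta|\ge1\), a nonzero polynomial \(C\) satisfies \(\deg C(B_\Delta(T))=|\Delta|\deg C\ge0\), so \(C(B_\Delta(T))\neq0\). Taking \(C=A-A_\Delta\) forces \(A=A_\Delta\).

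I expect no real obstacle. The only care needed is to check that the hypotheses of part~(2) hold, namely \(\Delta\subseteq\Tcal\) with both being \(\F_p\)-subspaces.
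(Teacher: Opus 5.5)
Your proposal is correct and matches the paper's proof essentially step for step: Lemma~\ref{lem:subspace_quotient_polynomials}(2) applied to \(\Delta\subseteq\Tcal\), together with \(B_{\Tcal}(T)=T^q+T\), gives the composition identity and the isomorphism \(\Tcal/\Delta\to\Gamma_\Delta\); the degree and kernel then come from \(A_\Delta=B_{\Gamma_\Delta}\); and uniqueness follows because substitution by the nonconstant polynomial \(B_\Delta\) is injective. One small correction: \(A_\Delta\) is monic by the definition of a subspace polynomial, not by part~(1) of the lemma, which supplies only additivity.
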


\begin{proof}
Apply Lemma~\ref{lem:subspace_quotient_polynomials}(2) to
\(\Delta\subseteq\Tcal\), and use
\eqref{eq:two_basic_subspace_polynomials}.  This gives the composition
identity and the quotient isomorphism.  Since
\(A_\Delta=B_{\Gamma_\Delta}\), its degree and kernel follow from the
definition of the subspace polynomial.  If \(C\in\F_{q^2}[T]\) also
satisfies \(C(B_\Delta(T))=T^q+T\), then
\((C-A_\Delta)(B_\Delta(T))=0\).  Substitution by the nonconstant polynomial
\(B_\Delta\) is injective on \(\F_{q^2}[T]\), so \(C=A_\Delta\).
\end{proof}

We also record the behaviour of subspace polynomials under scalar
multiplication, which will be used in Section~\ref{sec:cyclic_scalar_part}.

\begin{lemma}
\label{lem:scalar_action}
Let \(W\subseteq\F_{q^2}\) be a finite \(\F_p\)-subspace.  For every
\(\alpha\in \F_{q^2}^*\),
\[
        B_{\alpha W}(T)=\alpha^{|W|}B_W(\alpha^{-1}T).
\]
If \(\eta\in \F_{q^2}^*\) satisfies \(\eta W=W\), then
\(B_W(\eta T)=\eta B_W(T)\).
\end{lemma}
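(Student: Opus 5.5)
The plan is to compare monic polynomials through their degrees and zero sets, using only the definition \(B_W(T)=\prod_{w\in W}(T-w)\).

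For the first identity, substitute \(T\mapsto\alpha^{-1}T\) in the product and pull \(\alpha^{-1}\) out of each of the \(|W|\) linear factors:
\[
  \alpha^{|W|}B_W(\alpha^{-1}T)=\prod_{w\in W}\alpha(\alpha^{-1}T-w)=\prod_{w\in W}(T-\alpha w).
\]
Since multiplication by \(\alpha\ne0\) is a bijection \(W\to\alpha W\), the last product equals \(\prod_{u\in\alpha W}(T-u)=B_{\alpha W}(T)\). Equivalently, both sides are monic of degree \(|W|=|\alpha W|\) and have the same zero set \(\alpha W\). Note that \(\alpha W\) is again an \(\F_p\)-subspace, so \(B_{\alpha W}\) is defined.

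For the second identity, apply the first with \(\alpha=\eta^{-1}\) and use \(\eta^{-1}W=W\), which follows from \(\eta W=W\). This gives
\[
  B_W(T)=\eta^{-|W|}B_W(\eta T),\qquad\text{that is,}\qquad B_W(\eta T)=\eta^{|W|}B_W(T).
\]
It remains to show \(\eta^{|W|}=\eta\).

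\begin{itemize}
  \item If \(W=\{0\}\), then \(|W|=1\) and there is nothing to prove.
  \item Otherwise, pick \(0\ne w\in W\). The set \(K=\{\lambda\in\F_{q^2}:\lambda W\subseteq W\}\) is a subfield of \(\F_{q^2}\): it is closed under addition and multiplication, and it is closed under inverses because \(W\) is finite and multiplication by \(\lambda\ne0\) is injective. It contains \(\F_p\) and \(\eta\).
  \item \(W\) is then a \(K\)-vector space, so \(|W|=|K|^m\) with \(m\ge1\).
  \item Every \(\lambda\in K\) satisfies \(\lambda^{|K|}=\lambda\), hence \(\lambda^{|K|^m}=\lambda\). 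In particular \(\eta^{|W|}=\eta\).
\end{itemize}

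The only step needing care is \(\eta^{|W|}=\eta\), and it is handled by recognising that the stabiliser of \(W\) is a subfield over which \(W\) is a vector space. Alternatively, one can avoid the field argument: by Lemma~\ref{lem:subspace_quotient_polynomials}(1), \(B_W'=\beta\in\F_{q^2}^*\). Differentiating \(B_W(\eta T)=\eta^{|W|}B_W(T)\) gives \(\eta\beta=\eta^{|W|}\beta\), and therefore \(\eta^{|W|}=\eta\). This second route is shorter, and it is the one I would write.
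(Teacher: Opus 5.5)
Your proposal is correct and matches the paper's proof: the first identity comes from factoring \(\alpha\) out of each linear factor, and the second from taking \(\alpha=\eta^{-1}\) and differentiating, using that \(B_W'\) is a nonzero constant (Lemma~\ref{lem:subspace_quotient_polynomials}(1)). Your alternative argument is also valid: the stabiliser of \(W\) is a subfield \(K\), and \(|W|\) is a power of \(|K|\). It avoids that lemma but is not needed here.
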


\begin{proof}
Put \(N=|W|\).  The first formula follows from
\[
        \alpha^{N}B_W(\alpha^{-1}T)
        =\prod_{w\in W}(T-\alpha w)
        =B_{\alpha W}(T).
\]
If \(\eta W=W\), applying the first formula with
\(\alpha=\eta^{-1}\) gives \(B_W(\eta T)=\eta^N B_W(T)\).
By Lemma~\ref{lem:subspace_quotient_polynomials}(1), \(B'_W\) is a
nonzero constant.  Differentiating therefore gives
\(\eta B_W'=\eta^N B_W'\), and hence \(\eta^N=\eta\).  The last
assertion follows.
\end{proof}

\section{Fixed subfields of \texorpdfstring{\(p\)}{p}-subgroups}
\label{sec:wild_fixed_field}

Let \(U\le A_1(P_\infty)\) have \(p\)-subgroup data
\((V,\Delta;c_b+\Delta)\) as in
Section~\ref{sec:pointed_subgroup_data}, and put \(N=|V|\).

To obtain explicit generators of \(H^U\), we first take invariants under the
central translations determined by \(\Delta\).  The quotient translation
group then acts on the resulting second coordinate; a finite weighted sum
solves the associated difference equations and gives a basis-free equation.
After establishing the geometry of this model, we choose a basis of \(V\) and
obtain a second explicit formula using only the fibre values on that basis.

\subsection{Central invariants and the induced translation action}
\label{subsec:central_invariants}

Put \(Y_\Delta=B_\Delta(y)\). The subgroup \(U_\Delta\) fixes \(x\) and
\(Y_\Delta\), since \(B_\Delta(y+\delta)=B_\Delta(y)\) for
\(\delta\in\Delta\).
The element \(y\) is a root of
\(B_\Delta(T)-Y_\Delta\), and hence
\[
\F_{q^2}(x,Y_\Delta)\subseteq H^{U_\Delta},
\qquad
[H:\F_{q^2}(x,Y_\Delta)]\le |\Delta|=[H:H^{U_\Delta}].
\]
Thus \(H^{U_\Delta}=\F_{q^2}(x,Y_\Delta)\).  Consequently,
\(U/U_\Delta\) acts on this field, and we now make this quotient action
explicit.

Every element of \(U\) lying over \(b\in V\) induces on this field the
same action as the section representative \(s(b)=[1,b,c_b]\), since two
such elements differ by an element of \(U_\Delta\), which acts trivially
on \(H^{U_\Delta}\).  Define
\(\lambda_b(T)=B_\Delta(b^qT+c_b)\).  Since
\(s(b)(y)=y+b^qx+c_b\),
\[
        s(b)(x)=x+b,\qquad
        s(b)(Y_\Delta)
        =B_\Delta(y+b^qx+c_b)
        =Y_\Delta+\lambda_b(x).
\]
Since \(\ker B_\Delta=\Delta\), the polynomial
\(\lambda_b\) depends only on the coset \(c_b+\Delta\).
Lemma~\ref{lem:closure_identities}\textup{(1)} gives, for \(b,b'\in V\),
\begin{equation}
\label{eq:lambda_compatibility}
\begin{aligned}
\lambda_{b+b'}(T)
 &=B_\Delta\bigl((b+b')^qT+c_b+c_{b'}+b'^qb\bigr)\\
 &=B_\Delta(b^qT+c_b)+B_\Delta\bigl(b'^q(T+b)+c_{b'}\bigr)\\
 &=\lambda_b(T)+\lambda_{b'}(T+b).
\end{aligned}
\end{equation}

\subsection{The correction polynomial and invariant coordinates}
\label{subsec:correction_polynomial}

To cancel the increments of \(Y_\Delta\), we seek a polynomial
\(\Theta\in\F_{q^2}[T]\) satisfying
\(\Theta(T+b)-\Theta(T)=\lambda_b(T)\) for every \(b\in V\).  For such a
polynomial, \(Y_\Delta-\Theta(x)\) is fixed by \(U/U_\Delta\).
Relation~\eqref{eq:lambda_compatibility} is the compatibility relation for
this system of difference equations.  If \(N\) were invertible, ordinary
averaging would give a solution from
\(-N^{-1}\sum_{u\in V}\lambda_u(T)\).  When \(N>1\), however, \(N=0\) in
\(\F_{q^2}\).  We therefore use the following identity.  The polynomials
\((T+u)^{N-1}/B'_V\) have sum \(1\) and are permuted by translation.

\begin{lemma}
\label{lem:translation_sum_identity}
With \(N=|V|\), the following identity holds:
\[
        \sum_{u\in V}(T+u)^{N-1}=B'_V.
\]
\end{lemma}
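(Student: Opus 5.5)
Since \(V\) is a finite \(\F_p\)-subspace, Lemma~\ref{lem:subspace_quotient_polynomials}\textup{(1)} shows that \(B_V\) is additive and \(B_V'\) is a nonzero constant.  The plan is to compare the sum \(\sum_{u\in V}(T+u)^{N-1}\) with the logarithmic derivative of \(B_V\).  Since \(B_V(T)=\prod_{u\in V}(T-u)=\prod_{u\in V}(T+u)\) (because \(V=-V\)), we have
\[
        \frac{B_V'(T)}{B_V(T)}=\sum_{u\in V}\frac{1}{T+u}.
\]
So the task is to express \(\sum_u (T+u)^{N-1}\) in terms of \(\sum_u 1/(T+u)\) and \(B_V\).

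The key step is the identity \((T+u)^{N-1}\equiv B_V(T)/(T+u)\) in a suitable sense.  Additivity gives \(B_V(T+u)=B_V(T)+B_V(u)=B_V(T)\) for \(u\in V\).  However, we need \(B_V(T)/(T+u)\), which is a polynomial of degree \(N-1\) with leading coefficient \(1\), and it is not literally \((T+u)^{N-1}\).  A cleaner route is to write \(S=T+u\).  Then \(B_V(T)=B_V(S-u)=B_V(S)\), and
\[
        \frac{B_V(S)}{S}=S^{N-1}+(\text{lower terms in }S).
\]
Because \(B_V(S)=\sum_i a_iS^{p^i}\) with \(a_0=B_V'\), this equals \(\sum_i a_i S^{p^i-1}\).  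Summing over \(u\) gives
\[
        \sum_u\frac{B_V(T)}{T+u}=B_V'(T)=B_V',
\]
and hence
\[
        B_V'=\sum_{i}a_i\sum_{u\in V}(T+u)^{p^i-1}.
\]

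It then suffices to show that the power sums
\[
        \sum_{u\in V}(T+u)^{k}
\]
vanish for \(k=p^i-1\) with \(p^i<N\).  The top term \(a_n=1\), \(p^n=N\), then gives exactly \(\sum_u(T+u)^{N-1}\).  Expanding \((T+u)^k\) binomially reduces this to \(\sum_{u\in V}u^j=0\) for \(0\le j\le k<N-1\).  This is the standard vanishing of power sums over an \(\F_p\)-subspace of size \(N\) for exponents \(j<N-1\).  To prove it, I would use induction on \(\dim V\), decomposing \(V=V_0\oplus\F_pw\) and applying \(\sum_{a\in\F_p}a^m=0\) unless \((p-1)\mid m\) with \(m>0\).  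Alternatively, one can argue via Newton's identities from the sparse coefficients of \(B_V\).

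I expect the main obstacle to be this power-sum vanishing lemma, in particular tracking the multinomial expansion over a basis.  The cleanest approach I would try first is induction on dimension.  For \(u=v+aw\), expand \(\sum_{a}(v+aw)^j\); the only surviving terms have \(a\)-exponent a positive multiple of \(p-1\).  The complementary \(v\)-exponent is then strictly less than \(|V_0|-1\), and the induction applies.  If \(N=1\), the identity is trivial: \(1=B_V'\) with \(B_V=T\).
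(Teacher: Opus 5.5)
Your reduction is correct and takes a different route from the paper's. Since $B_V(T)=B_V(T+u)$ for $u\in V$, you get $B_V(T)/(T+u)=\sum_i a_i(T+u)^{p^i-1}$, and summing over $u$ (the logarithmic derivative) gives $B'_V=\sum_i a_i\sum_{u\in V}(T+u)^{p^i-1}$.

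The gap is in the one nontrivial input you then need: the vanishing of $\sum_{u\in V}u^j$ for $j<N-1$. Your inductive step claims that, after expanding $\sum_{a\in\F_p}(v+aw)^j$, every surviving term has complementary exponent $j-m<|V_0|-1$. But $j\le N-2$ and $m\ge p-1$ only give $j-m\le p(|V_0|-1)-1$. Two examples show the claim fails:
\begin{itemize}
\item For $p=2$, $V_0=\F_2w_0$, $N=4$, $j=2$, the term $m=1$ has $j-m=1=|V_0|-1$, where $\sum_{v\in V_0}v=w_0\neq0$. It vanishes only because $\binom21\equiv0\pmod 2$.
\item For $p=3$, $|V_0|=3$, $j=5$, $m=2$, one even has $j-m=3>|V_0|-1$, with $\binom52\not\equiv0\pmod 3$.
\end{itemize}
So the hypothesis ``power sums over $V_0$ vanish below $|V_0|-1$'' does not close the induction. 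Restricting to the range $j\le N/p-1$ that you need at the top level does not help either. It still requires vanishing over $V_0$ for exponents up to $|V_0|-p$, so the problem recurs one level down.

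A correct induction must track binomial coefficients. For example, prove that $\sum_{u\in V}u^j=0$ whenever the base-$p$ digit sum of $j$ is less than $(p-1)\dim V$, using Lucas' theorem. A nonzero $\binom jm$ makes the digit sum of $j-m$ drop by that of $m$, which is at least $p-1$ when $(p-1)\mid m>0$. Your Newton's-identities alternative also works: by strong induction the only term left is $je_j$, and $e_j\neq0$ with $j\le N-2$ forces $p\mid j$. But you only mention it.

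There are shorter ways to finish.
\begin{itemize}
\item Staying with your identity, induct on $\dim V$ for the lemma itself. For $p^i<N$, choose $W\le V$ with $|W|=p^i$ and split $V$ into $N/p^i$ cosets $t+W$. The induction hypothesis, with $T+t$ in place of $T$, gives $\sum_{u\in V}(T+u)^{p^i-1}=(N/p^i)B'_W=0$. So only the term with $a_n=1$ survives.
\item Alternatively, expand $\sum_{u\in V}(T-u)^{-1}=B'_V/B_V(T)$ in powers of $T^{-1}$. Since $B_V(T)=T^N+(\text{terms of degree}\le N/p)$, this yields $\sum_u u^j=0$ for $j<N-1$ and $\sum_u u^{N-1}=B'_V$ at once. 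The lemma then follows by binomial expansion, without the additivity step.
\end{itemize}
The paper's proof is shorter still. It applies Lagrange interpolation to $(S+T)^{N-1}$, viewed as a polynomial in $S$, at the $N$ points of $V$, and compares the $S^{N-1}$-coefficients.
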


\begin{proof}
Lagrange interpolation of \((S+T)^{N-1}\), viewed as a polynomial in
\(S\), at the points of \(V\) gives
\[
 (S+T)^{N-1}=\sum_{b\in V}(T+b)^{N-1}
 \frac{B_V(S)}{(S-b)B'_V},
\]
where \(B_V'(b)=B'_V\ne0\) by
Lemma~\ref{lem:subspace_quotient_polynomials}\textup{(1)}.
Comparing the coefficients of \(S^{N-1}\) proves the identity.
\end{proof}

We shall also use the following elementary factorization property of
translation-invariant polynomials.

\begin{lemma}
\label{lem:translation_invariant_polynomials}
Every polynomial invariant under the translations \(T\mapsto T+b\),
\(b\in V\), has a unique expression \(Q(B_V(T))\), with
\(Q\in\F_{q^2}[S]\).
\end{lemma}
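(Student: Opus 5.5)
We prove existence by division with remainder by \(B_V\) and induction on degree, and prove uniqueness by comparing degrees. Let \(P\in\F_{q^2}[T]\) satisfy \(P(T+b)=P(T)\) for all \(b\in V\). We argue by strong induction on \(\deg P\).

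If \(\deg P<N=|V|\), then \(P(b)-P(0)=0\) for every \(b\in V\). So \(P-P(0)\) has at least \(N\) roots but degree less than \(N\), which forces \(P=P(0)\). We then take \(Q\) to be this constant.

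Suppose now \(\deg P\ge N\). We divide \(P\) by the monic polynomial \(B_V\) in \(S\)-adic form, i.e.\ we write \(P(T)=P_1(T)B_V(T)+P_0(T)\) with \(\deg P_0<N\). By Lemma~\ref{lem:subspace_quotient_polynomials}\textup{(1)}, \(B_V\) is additive with kernel \(V\), so \(B_V(T+b)=B_V(T)\) for \(b\in V\). Substituting \(T+b\) and subtracting the original expression gives
\[
\bigl(P_1(T+b)-P_1(T)\bigr)B_V(T)+\bigl(P_0(T+b)-P_0(T)\bigr)=0 .
\]
The second bracket has degree less than \(N=\deg B_V\). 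By uniqueness of division with remainder, both brackets vanish. Hence \(P_0\) and \(P_1\) are translation invariant.

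By the base case \(P_0\) is a constant \(c\). Also \(\deg P_1=\deg P-N<\deg P\), so by induction \(P_1=Q_1(B_V)\). We set \(Q(S)=SQ_1(S)+c\), which gives \(P=Q(B_V(T))\).

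For uniqueness, suppose \(Q(B_V)=Q'(B_V)\). Then \((Q-Q')(B_V(T))=0\). Since \(B_V\) is nonconstant, \(\deg\bigl(R(B_V)\bigr)=N\deg R\) for any nonzero \(R\). So \(Q-Q'=0\), the same injectivity argument as in Proposition~\ref{prop:central_complementary_factor}.

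The only step needing care is the vanishing of both brackets. It rests on the remainder having degree below \(\deg B_V\), and on the translation invariance of \(B_V\), which comes from its additivity. No other obstacle arises.
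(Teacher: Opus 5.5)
Your proof is correct and follows essentially the paper's argument: induction on degree, splitting off a factor \(B_V\), showing the cofactor is again \(V\)-invariant, and getting uniqueness from injectivity of substitution by the nonconstant \(B_V\). The only difference is cosmetic: you use division with remainder and uniqueness of quotient and remainder to make both pieces invariant, whereas the paper notes directly that \(R-R(0)\) vanishes on \(V\) and is therefore divisible by \(B_V\).
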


\begin{proof}
Every polynomial in \(B_V(T)\) is \(V\)-invariant, and substitution by
the nonconstant polynomial \(B_V\) is injective.  The zero polynomial has
the required form.  For the converse, let \(R\ne0\) and induct on \(\deg R\).
Since \(R(b)=R(0)\) for every \(b\in V\), the polynomial \(B_V(T)\)
divides \(R(T)-R(0)\).  If \(\deg R<|V|\), this forces \(R=R(0)\).
Otherwise write \(R(T)-R(0)=B_V(T)Q_1(T)\).  Since
\(B_V(T+b)=B_V(T)\), for every \(b\in V\) we obtain
\[
0=\bigl(R(T+b)-R(0)\bigr)-\bigl(R(T)-R(0)\bigr)
 =B_V(T)\bigl(Q_1(T+b)-Q_1(T)\bigr).
\]
Thus \(Q_1\) is \(V\)-invariant, and the induction hypothesis completes
the proof.
\end{proof}

Reducing \(R(T)=Q(B_V(T))\) modulo \(B_V(T)-S\) in
\(\F_{q^2}[S][T]\) replaces \(B_V(T)\) by \(S\).  Hence the remainder is
\(Q(S)\).  We use this observation for explicit models below.

Retain the subgroup \(U\le A_1(P_\infty)\) and the notation above.  Define
its \emph{correction polynomial} by
\begin{equation}
\label{eq:Theta}
        \Theta_U(T)=
        -\frac{1}{B'_V}
        \sum_{b\in V}\lambda_b(T)(T+b)^{N-1},
\end{equation}
and put \(X=B_V(x)\) and
\(Z=B_\Delta(y)-\Theta_U(x)\).

\begin{proposition}
\label{prop:wild_invariant_coordinates}
The polynomial \(\Theta_U\) defined by \eqref{eq:Theta} satisfies
\[
        \Theta_U(T+b)-\Theta_U(T)=\lambda_b(T)
        \qquad(b\in V).
\]
Consequently, \(X\) and \(Z\) are fixed by \(U\), and there is a unique
\(\Psi_U\in\F_{q^2}[S]\) such that
\[
        T^{q+1}-A_\Delta(\Theta_U(T))=\Psi_U(B_V(T)),
        \qquad
        A_\Delta(Z)=\Psi_U(X).
\]
Moreover, \(\Psi_U(S)\) is the remainder of
\(T^{q+1}-A_\Delta(\Theta_U(T))\) upon division by \(B_V(T)-S\).
\end{proposition}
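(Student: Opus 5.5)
The proof proceeds in three steps: the difference equation, then invariance of $X$ and $Z$, then the existence of $\Psi_U$.

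\textbf{Difference equation.} Compute $\Theta_U(T+c)$ for $c\in V$ directly from \eqref{eq:Theta}. Reindex the sum by $b\mapsto b-c$, which permutes $V$, to get
\[
-B'_V\,\Theta_U(T+c)=\sum_{b\in V}\lambda_{b-c}(T+c)\,(T+b)^{N-1}.
\]
Apply the compatibility relation \eqref{eq:lambda_compatibility} to the pair $c,\,b-c$:
\[
\lambda_b(T)=\lambda_c(T)+\lambda_{b-c}(T+c),
\qquad\text{so}\qquad
\lambda_{b-c}(T+c)=\lambda_b(T)-\lambda_c(T).
\]
Substituting gives
\[
-B'_V\,\Theta_U(T+c)=-B'_V\,\Theta_U(T)-\lambda_c(T)\sum_{b\in V}(T+b)^{N-1}.
\]
By Lemma~\ref{lem:translation_sum_identity} the last sum equals $B'_V$. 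Dividing by $-B'_V$ yields $\Theta_U(T+c)-\Theta_U(T)=\lambda_c(T)$.

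\textbf{Invariance of $X$ and $Z$.} By Section~\ref{subsec:central_invariants}, $U_\Delta$ fixes $x$ and $Y_\Delta$, and each $g\in U$ over $b\in V$ acts on $H^{U_\Delta}$ as $s(b)$. Then $s(b)(X)=B_V(x+b)=B_V(x)$, since $B_V$ is additive and $B_V(b)=0$. Also
\[
s(b)(Z)=Y_\Delta+\lambda_b(x)-\Theta_U(x+b)=Z,
\]
by the difference equation.

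\textbf{Existence of $\Psi_U$.} Set $R(T)=T^{q+1}-A_\Delta(\Theta_U(T))$. I will show $R$ is $V$-invariant and then apply Lemma~\ref{lem:translation_invariant_polynomials}. By Proposition~\ref{prop:central_complementary_factor} and additivity of $A_\Delta$, $A_\Delta\circ B_\Delta=T^q+T$, and
\[
A_\Delta(\Theta_U(T+b))-A_\Delta(\Theta_U(T))=A_\Delta(\lambda_b(T))=(b^qT+c_b)^q+(b^qT+c_b).
\]
This equals $bT^q+b^qT+c_b^q+c_b=bT^q+b^qT+b^{q+1}$, using $b^{q^2}=b$ and the defining relation $c_b^q+c_b=b^{q+1}$. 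On the other side, $(T+b)^{q+1}-T^{q+1}=bT^q+b^qT+b^{q+1}$, so the increments cancel and $R(T+b)=R(T)$ for all $b\in V$.

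Lemma~\ref{lem:translation_invariant_polynomials} then gives a unique $\Psi_U$ with $R=\Psi_U(B_V(T))$. The remark after that lemma identifies $\Psi_U$ as the remainder of $R$ modulo $B_V(T)-S$.

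\textbf{The field relation.} Apply $A_\Delta$ to $Z$. Using $A_\Delta(B_\Delta(y))=y^q+y=x^{q+1}$,
\[
A_\Delta(Z)=x^{q+1}-A_\Delta(\Theta_U(x))=R(x)=\Psi_U(X).
\]

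The step needing the most care is the invariance of $R$. It requires the Frobenius bookkeeping with $b^{q^2}=b$ and the fibre condition $c_b^q+c_b=b^{q+1}$, and it requires noting that $A_\Delta$ kills $\Delta$, so the representative $c_b$ within its coset does not matter. The reindexing in the first step is routine once \eqref{eq:lambda_compatibility} is applied with the right pair $(c,\,b-c)$.
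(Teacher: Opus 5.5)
Your proof is correct and follows essentially the same route as the paper: reindex the weighted sum, apply the compatibility relation \eqref{eq:lambda_compatibility} together with Lemma~\ref{lem:translation_sum_identity}, then check that $A_\Delta(\lambda_b(T))=(T+b)^{q+1}-T^{q+1}$ and invoke Lemma~\ref{lem:translation_invariant_polynomials}. One small slip in your closing commentary: $B_\Delta$ vanishes on $\Delta$, not $A_\Delta$, whose kernel is $\Gamma_\Delta$; the remark plays no role in your argument, so the proof stands.
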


\begin{proof}
Fix \(b\in V\).  From \eqref{eq:lambda_compatibility},
\(\lambda_u(T+b)=\lambda_{u+b}(T)-\lambda_b(T)\).  Reindexing by
\(v=u+b\) and using Lemma~\ref{lem:translation_sum_identity} gives
\begin{align*}
-B'_V\Theta_U(T+b)
 &=\sum_{u\in V}\lambda_u(T+b)(T+b+u)^{N-1}\\
 &=\sum_{v\in V}\bigl(\lambda_v(T)-\lambda_b(T)\bigr)(T+v)^{N-1}\\
 &=-B'_V\Theta_U(T)-B'_V\lambda_b(T).
\end{align*}
The difference equation and \(B_V(x+b)=B_V(x)\) show that
\(X\) and \(Z\) are \(U\)-invariant.

It remains to construct \(\Psi_U\).  For \(b\in V\), the identity
\(\Theta_U(T+b)-\Theta_U(T)=\lambda_b(T)\), together with
\(A_\Delta(B_\Delta(S))=S^q+S\) and \(c_b^q+c_b=b^{q+1}\), gives
\[
 A_\Delta(\Theta_U(T+b))-A_\Delta(\Theta_U(T))
 =bT^q+b^qT+b^{q+1}.
\]
The right-hand side is \((T+b)^{q+1}-T^{q+1}\).  Hence
\(T^{q+1}-A_\Delta(\Theta_U(T))\) is \(V\)-invariant, and
Lemma~\ref{lem:translation_invariant_polynomials} gives the unique
polynomial \(\Psi_U\).  Finally, substituting \(T=x\) in the resulting
factorization and using \(y^q+y=x^{q+1}\) gives
\(A_\Delta(Z)=y^q+y-A_\Delta(\Theta_U(x))=\Psi_U(X)\).
\end{proof}

\begin{remark}
The formula for \(\Theta_U\) also follows from
\cite[Theorem~4.30]{Jacobson1985}, applied to the Galois extension
\(\F_{q^2}(T)/\F_{q^2}(B_V(T))\), whose Galois group consists of the
translations \(T\mapsto T+b\), \(b\in V\).
\end{remark}

\subsection{The fixed field and its geometry}
\label{subsec:wild_field_geometry}

\begin{theorem}
\label{thm:wild_fixed_field}
Let \(U\le A_1(P_\infty)\) have \(p\)-subgroup data
\((V,\Delta;c_b+\Delta)\), and let \(X,Z,\Psi_U\) be the invariant
generators and polynomial defined in
Proposition~\ref{prop:wild_invariant_coordinates}.  Then
\[
        H^U=\F_{q^2}(X,Z),\qquad
        A_\Delta(Z)=\Psi_U(X).
\]
\end{theorem}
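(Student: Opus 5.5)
The plan is a degree comparison. By Proposition~\ref{prop:wild_invariant_coordinates}, \(X\) and \(Z\) are fixed by \(U\) and satisfy \(A_\Delta(Z)=\Psi_U(X)\). Hence \(\F_{q^2}(X,Z)\subseteq H^U\), and it is enough to show
\[
[H:\F_{q^2}(X,Z)]\le |U|=|V|\,|\Delta|,
\]
with the last equality coming from Proposition~\ref{prop:fibre_decomposition}. Galois theory then gives \([H:H^U]=|U|\), so equality of fields follows.

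The degree bound will go through the intermediate field \(H^{U_\Delta}=\F_{q^2}(x,Y_\Delta)\) from Subsection~\ref{subsec:central_invariants}, which satisfies \([H:\F_{q^2}(x,Y_\Delta)]=|\Delta|\).

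1. Consider the tower \(\F_{q^2}(X,Z)\subseteq\F_{q^2}(X,Z,x)\subseteq H\).
2. First show \(\F_{q^2}(X,Z,x)=\F_{q^2}(x,Y_\Delta)\). Since \(X=B_V(x)\) is a polynomial in \(x\), and \(Y_\Delta=B_\Delta(y)=Z+\Theta_U(x)\), the field \(\F_{q^2}(X,Z,x)\) equals \(\F_{q^2}(x,Z)=\F_{q^2}(x,Y_\Delta)\).
3. Next, \(x\) is a root of \(B_V(T)-X\in\F_{q^2}(X,Z)[T]\), a polynomial of degree \(N=|V|\). Therefore \([\F_{q^2}(X,Z,x):\F_{q^2}(X,Z)]\le N\).
4. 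Combining these gives \([H:\F_{q^2}(X,Z)]\le|\Delta|\cdot N=|U|\). The reverse inequality \([H:\F_{q^2}(X,Z)]\ge[H:H^U]=|U|\) holds because \(\F_{q^2}(X,Z)\subseteq H^U\). Hence \(H^U=\F_{q^2}(X,Z)\).

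There is no real obstacle. The only point needing care is that the identity \(Y_\Delta=Z+\Theta_U(x)\) lets one pass between \(Z\) and \(Y_\Delta\) once \(x\) is adjoined, and this is immediate from the definition of \(Z\). The defining relation \(A_\Delta(Z)=\Psi_U(X)\) was already established in Proposition~\ref{prop:wild_invariant_coordinates}, so nothing further is needed for the equation.
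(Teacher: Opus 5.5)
Your proposal is correct and is essentially the paper's argument: the paper bounds \([H:\F_{q^2}(X,Z)]\le |V|\,|\Delta|=|U|\) because \(x\) is a root of \(B_V(T)-X\) and, once \(x\) is adjoined, \(y\) is a root of \(B_\Delta(T)-Z-\Theta_U(x)\). Routing the second step through \(H^{U_\Delta}=\F_{q^2}(x,Y_\Delta)\) is only a cosmetic difference, since \(Y_\Delta=Z+\Theta_U(x)\).
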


\begin{proof}
Proposition~\ref{prop:wild_invariant_coordinates} gives
\(\F_{q^2}(X,Z)\subseteq H^U\) and the relation
\(A_\Delta(Z)=\Psi_U(X)\).  The element
\(x\) is a root of \(B_V(T)-X\), and, after adjoining \(x\), the element
\(y\) is a root of \(B_\Delta(T)-Z-\Theta_U(x)\).  Therefore
\[
        [H:\F_{q^2}(X,Z)]\le |V|\,|\Delta|=|U|.
\]
Since \([H:H^U]=|U|\), the inclusion must be an equality.
\end{proof}

\begin{corollary}
\label{cor:wild_affine_model_geometry}
The polynomial
\(A_\Delta(Y)-\Psi_U(S)\in\F_{q^2}[S,Y]\) is absolutely irreducible.
With coordinate functions \(S=X\) and \(Y=Z\), its zero set defines a
smooth affine plane model of \(H^U\).  The unique place of \(H^U\) not
represented on this model is \(P_\infty^U\).
\end{corollary}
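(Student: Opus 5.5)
We first prove absolute irreducibility by a degree count. Put \(F(S,Y)=A_\Delta(Y)-\Psi_U(S)\). By Theorem~\ref{thm:wild_fixed_field}, \(H^U=\F_{q^2}(X,Z)\) and \(F(X,Z)=0\). The pole orders at \(P_\infty\) are \((X)_\infty=qN\,P_\infty\) and \((Z)_\infty=(q+1)|\Delta|\,P_\infty\). For \(Z\), note that \(B_\Delta(y)\) has pole order \((q+1)|\Delta|\). The degree of \(\Theta_U\) is at most \(q|\Delta|/q\cdot\ldots\); more simply, \(Z\) is \(U\)-invariant and satisfies \(A_\Delta(Z)=\Psi_U(X)\), so the comparison below suffices. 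Since \(P_\infty\) is totally ramified in \(H/H^U\) (as \(U\le A_1(P_\infty)\)), we get \([H^U:\F_{q^2}(Z)]=qN/|U|\cdot\ldots\). Rather than compute this directly, we argue as follows.

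From \(F(X,Z)=0\), \([H^U:\F_{q^2}(X)]\le\deg_Y F=q/|\Delta|\). On the other hand, \([H:\F_{q^2}(X)]=[H:\F_{q^2}(x)]\cdot N=qN\), so \([H^U:\F_{q^2}(X)]=qN/(N|\Delta|)=q/|\Delta|\). Hence \(F\), viewed as a polynomial in \(Y\) over \(\F_{q^2}(S)\), is a minimal polynomial of \(Z\) up to scaling. It is monic in \(Y\), so it is irreducible in \(\F_{q^2}[S,Y]\). Applying the same argument over \(\overline{\F}_{q^2}\) (a constant field extension, where \(\F_{q^2}\) stays the full constant field and degrees are preserved) gives absolute irreducibility.

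Next we prove smoothness, which is the main point. We have \(\partial F/\partial Y=A_\Delta'\). By Proposition~\ref{prop:central_complementary_factor}, \(A_\Delta=B_{\Gamma_\Delta}\), so Lemma~\ref{lem:subspace_quotient_polynomials}(1) shows that \(A_\Delta'\) is a nonzero constant. Thus every affine point of the curve \(F=0\) is nonsingular. Over each affine point \((s,z)\), we therefore get exactly one place, with local parameter \(S-s\). Conversely, every place \(P\ne P_\infty^U\) is a zero of neither \(1/X\) nor \(1/Z\) as a pole. The poles of \(X\) and \(Z\) in \(H\) lie only at \(P_\infty\), so in \(H^U\) they lie only at \(P_\infty^U\), because \(P_\infty\) lies over \(P_\infty^U\) and all places over \(P_\infty^U\) are conjugate to \(P_\infty\) under \(U\), which fixes \(P_\infty\). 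Hence such a \(P\) is centered at an affine point. This shows that the places other than \(P_\infty^U\) correspond bijectively to the affine points, and the affine model is smooth.

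Finally, \(P_\infty^U\) is a pole of \(X\), so it is not represented on the model. The only delicate step is the degree computation \([H:\F_{q^2}(X)]=qN\), which follows from \([H:\F_{q^2}(x)]=q\) and \(\deg B_V=N\), and the fact that the inequality from \(F\) is tight.
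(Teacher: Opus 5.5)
Your argument is correct and is essentially the paper's proof. It has the same four ingredients: the tower count \([H^U:\F_{q^2}(X)]=qN/(N|\Delta|)=q/|\Delta|=\deg_Y F\); preservation of this degree under extension of constants to \(\overline{\F}_{q^2}\), because \(\F_{q^2}\) is the full constant field of \(H^U\subseteq H\); smoothness from the constant \(\partial F/\partial Y=A'_\Delta\); and the pole argument singling out \(P_\infty^U\).

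You should delete the abandoned first paragraph. It is never used and contains a false assertion: if \(V\ne\{0\}\), then \(P_\infty\) is totally ramified in \(H/H^U\), so the pole order at \(P_\infty\) of the \(U\)-invariant function \(Z\) is divisible by \(|U|=N|\Delta|\), while \(N\nmid q+1\), and hence \((Z)_\infty\ne(q+1)|\Delta|P_\infty\).
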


\begin{proof}
Put \(\Omega=\overline{\F}_{q^2}\).  For absolute irreducibility, it is enough
to show that \(A_\Delta(Y)-\Psi_U(X)\) is the minimal polynomial of \(Z\)
over \(\Omega(X)\).  By Theorem~\ref{thm:wild_fixed_field},
\(H^U=\F_{q^2}(X,Z)\).  The translations \(x\mapsto x+b\), \(b\in V\),
are \(|V|\) distinct \(\F_{q^2}(X)\)-automorphisms of
\(\F_{q^2}(x)\), while \(x\) satisfies \(B_V(T)-X\).  Hence
\([\F_{q^2}(x):\F_{q^2}(X)]=|V|\).  Since
\([H:\F_{q^2}(x)]=q\) and \([H:H^U]=|U|=|V|\,|\Delta|\), the tower law
gives
\[
        [H^U:\F_{q^2}(X)]
        =\frac{[H:\F_{q^2}(x)][\F_{q^2}(x):\F_{q^2}(X)]}{[H:H^U]}
        =\frac{q|V|}{|V|\,|\Delta|}
        =\frac q{|\Delta|}
        =\deg A_\Delta.
\]
Because \(H^U\subseteq H\) and \(H\) has full constant field
\(\F_{q^2}\), the same is true of \(H^U\).  Thus
\(H^U/\F_{q^2}\) is regular, and extending constants to \(\Omega\)
preserves this degree.  Therefore
\([\Omega(X,Z):\Omega(X)]=q/|\Delta|=\deg A_\Delta\).  The function
\(Z\) satisfies the monic polynomial
\(A_\Delta(Y)-\Psi_U(X)\in \Omega(X)[Y]\) of this degree, so it is the
minimal polynomial of \(Z\) over \(\Omega(X)\).  Gauss's lemma gives
irreducibility in \(\Omega[X,Y]\), and hence absolute irreducibility
over \(\F_{q^2}\).  The affine model is smooth because its partial
derivative with respect to the second coordinate is the nonzero
constant \(A'_\Delta\).

It remains to identify the places represented on this model.  The functions
\(X\) and \(Z\) have no poles away from \(P_\infty^U\), whereas \(X\)
has a pole at \(P_\infty^U\).  Thus every other place has a centre on the
affine model, whereas \(P_\infty^U\) does not.  Hence \(P_\infty^U\) is the
unique place not represented on this model.
\end{proof}

Although the explicit equation depends on the full \(p\)-subgroup data
\((V,\Delta;c_b+\Delta)\), the genus depends only on \(|V|\) and
\(|\Delta|\).  By \cite[equation~(3.8)]{GarciaStichtenothXing2000}, the genus is
\begin{equation}
\label{eq:gsx_wild_genus_value}
        g(H^U)=\frac{q(q-|\Delta|)}{2|V||\Delta|}.
\end{equation}

The pair \((\Theta_U,\Psi_U)\) is independent of the chosen fibre
representatives \(c_b\).

\begin{samepage}
\begin{lemma}
\label{lem:wild_coordinate_freedom}
If \(\widetilde{\Theta}_U\in\F_{q^2}[T]\) also satisfies
\[
 \widetilde{\Theta}_U(T+b)-\widetilde{\Theta}_U(T)=\lambda_b(T)\qquad(b\in V),
\]
then there is a unique \(h\in\F_{q^2}[S]\) such that
\[
 \widetilde{\Theta}_U(T)=\Theta_U(T)+h(B_V(T)).
\]
\end{lemma}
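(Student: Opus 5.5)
The plan is to look at the difference \(D(T)=\widetilde{\Theta}_U(T)-\Theta_U(T)\in\F_{q^2}[T]\) and reduce everything to Lemma~\ref{lem:translation_invariant_polynomials}. Proposition~\ref{prop:wild_invariant_coordinates} shows that \(\Theta_U\) satisfies \(\Theta_U(T+b)-\Theta_U(T)=\lambda_b(T)\) for every \(b\in V\). By hypothesis \(\widetilde{\Theta}_U\) satisfies the same system. Subtracting the two identities gives \(D(T+b)-D(T)=0\) for all \(b\in V\), as an identity in \(\F_{q^2}[T]\). So \(D\) is invariant under the translations \(T\mapsto T+b\), \(b\in V\).

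Lemma~\ref{lem:translation_invariant_polynomials} then gives a unique \(h\in\F_{q^2}[S]\) with \(D(T)=h(B_V(T))\). This is exactly the identity \(\widetilde{\Theta}_U(T)=\Theta_U(T)+h(B_V(T))\).

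Uniqueness of \(h\) needs separate justification. Suppose \(h_1\) and \(h_2\) both satisfy the identity. Then \((h_1-h_2)(B_V(T))=0\). Since \(B_V\) is nonconstant (of degree \(|V|\ge1\)), substitution by \(B_V\) is injective on \(\F_{q^2}[S]\); this was already used in the proof of Proposition~\ref{prop:central_complementary_factor}. Hence \(h_1=h_2\).

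I do not expect a real obstacle; the argument is essentially a direct application of Lemma~\ref{lem:translation_invariant_polynomials}. The one point that needs care is the case \(V=\{0\}\). There \(B_V(T)=T\), every polynomial is trivially invariant, and the lemma still applies, giving \(h=D\). It may be worth remarking afterwards that this freedom changes \(Z\) by \(h(X)\), so \(\Psi_U\) changes accordingly. That remark is not needed for the statement itself.
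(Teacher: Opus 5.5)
Your proposal is correct and is essentially the paper's proof: the difference \(\widetilde{\Theta}_U-\Theta_U\) is invariant under translation by \(V\), and Lemma~\ref{lem:translation_invariant_polynomials} supplies \(h\). Your separate uniqueness argument is harmless but redundant, since that lemma already asserts a \emph{unique} expression \(Q(B_V(T))\).
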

\end{samepage}

\begin{proof}
The polynomial \(\widetilde{\Theta}_U-\Theta_U\) is invariant under translations by
\(V\).  Lemma~\ref{lem:translation_invariant_polynomials} therefore gives a
unique \(h\in\F_{q^2}[S]\) such that
\(\widetilde{\Theta}_U-\Theta_U=h\circ B_V\).
\end{proof}

If \(\widetilde Z=B_\Delta(y)-\widetilde{\Theta}_U(x)\), then \(\widetilde Z=Z-h(X)\), and the
corresponding right-hand side is
\[
 \widetilde{\Psi}_U(S)=\Psi_U(S)-A_\Delta(h(S)).
\]
Thus \((S,Y)\mapsto(S,Y-h(S))\) identifies the corresponding affine models.
We use this freedom below to obtain a formula from basis values and later, in
Section~\ref{sec:closed_families}, to choose invariant coordinates adapted to
the subgroup under consideration.

\subsection{An \texorpdfstring{\(\F_p\)-quadratic}{Fp-quadratic} formula from basis values}
\label{subsec:quadratic_additive_normal_form}

\begin{definition}
A polynomial \(P\in\F_{q^2}[T]\) is
\emph{\(\F_p\)-quadratic} if it is an \(\F_{q^2}\)-linear combination of
monomials \(T^{p^i}\) and \(T^{p^i+p^j}\), where \(i,j\ge0\).
\end{definition}

Products of two additive polynomials are \(\F_p\)-quadratic, and this class is
closed under taking \(p^k\)-th powers and \(\F_{q^2}\)-linear combinations.
For every such \(P\), the function
\((s,t)\mapsto P(s+t)-P(s)-P(t)\) on \(\F_{q^2}^2\) is
\(\F_p\)-bilinear and symmetric, and is alternating if \(p=2\).

The correction polynomial \(\Theta_U\) of
Proposition~\ref{prop:wild_invariant_coordinates} is basis-free, but its
defining sum uses the fibre data over every \(b\in V\).  We now derive an
alternative formula using only the fibre values on a basis.  By
Lemma~\ref{lem:closure_identities} and \(\ker B_\Delta=\Delta\), the values
\(B_\Delta(c_b)\) satisfy
\[
 B_\Delta(c_{b+b'})-B_\Delta(c_b)-B_\Delta(c_{b'})
 =B_\Delta(b'^qb)=B_\Delta(b^qb')
 \qquad(b,b'\in V).
\]
We first construct an explicit \(\F_p\)-quadratic polynomial that reproduces
this identity on \(V\), and then add an additive polynomial to match the fibre
values on the chosen basis.

Fix a subgroup \(U\le A_1(P_\infty)\) with data
\((V,\Delta;c_b+\Delta)\).  We first treat the case \(V\ne\{0\}\).  Choose an
\(\F_p\)-basis \(\mathcal B=(b_1,\ldots,b_\nu)\) of \(V\).  For
\(1\le r\le\nu\), let
\[
 V_r=\langle b_s:s\ne r\rangle_{\F_p},\qquad
 \ell_r(T)=\frac{B_{V_r}(T)}{B_{V_r}(b_r)}
\]
be the polynomials from
Lemma~\ref{lem:coordinate_polynomials}, and put
\(g_{rs}=B_\Delta(b_r^qb_s)\).  The first sum below combines the
\(\F_p\)-linear maps \(T\mapsto B_\Delta(b_r^qT)\) through the coordinate
polynomials \(\ell_r\).  Its difference under addition by \(b\in V\) contains
an additional mixed term, which the quadratic correction removes.  Define
\[
 Q_{\mathcal B}(T)=
 \sum_{r=1}^{\nu}\ell_r(T)B_\Delta(b_r^qT)+
 \begin{cases}
 -\dfrac12\displaystyle\sum_{r,s=1}^{\nu}
 g_{rs}\ell_r(T)\ell_s(T),&p\ne2,\\[2mm]
 \displaystyle\sum_{1\le r<s\le\nu}
 g_{rs}\ell_r(T)\ell_s(T),&p=2.
 \end{cases}
\]

\begin{lemma}
\label{lem:quadratic_primitive}
Assume that \(V\ne\{0\}\) and retain the preceding notation.  The matrix
\((g_{rs})\) is symmetric and, when \(p=2\), has zero diagonal.  The
polynomial \(Q_{\mathcal B}\) is
\(\F_p\)-quadratic and satisfies
\begin{equation}
\label{eq:quadratic_primitive_difference}
 Q_{\mathcal B}(T+b)-Q_{\mathcal B}(T)-Q_{\mathcal B}(b)
 =B_\Delta(b^qT)\qquad(b\in V).
\end{equation}
\end{lemma}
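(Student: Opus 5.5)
The plan is to handle the three assertions in order: symmetry of \((g_{rs})\), the \(\F_p\)-quadratic shape of \(Q_{\mathcal B}\), and then the difference identity \eqref{eq:quadratic_primitive_difference}, which carries all the content.

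\emph{Symmetry.} Each \(b_r^qb_s\) lies in \(V\cdot V\), and Lemma~\ref{lem:closure_identities}(2) gives \(b_r^qb_s-b_s^qb_r\in\Delta=\ker B_\Delta\). Applying the additive polynomial \(B_\Delta\) yields \(g_{rs}=g_{sr}\). For \(p=2\) we also need \(g_{rr}=B_\Delta(b_r^{q+1})=0\). I would take this from the closure identity: Lemma~\ref{lem:closure_identities}(1) with \(b=b'\) gives \(c_{2b}\equiv 2c_b+b^{q+1}\pmod\Delta\). In characteristic two we have \(2b=0\) and \(c_0=0\), so \(b^{q+1}\in\Delta\) and hence \(B_\Delta(b^{q+1})=0\).

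\emph{\(\F_p\)-quadratic shape.} The map \(T\mapsto B_\Delta(b_r^qT)\) is additive, each \(\ell_r\) is additive by Lemma~\ref{lem:coordinate_polynomials}, and a product of two additive polynomials is \(\F_p\)-quadratic. Every term of \(Q_{\mathcal B}\) is such a product times a constant, so \(Q_{\mathcal B}\) is \(\F_p\)-quadratic.

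\emph{The difference identity.} For a product \(P=L_1L_2\) of additive polynomials,
\[
P(T+b)-P(T)-P(b)=L_1(T)L_2(b)+L_1(b)L_2(T).
\]
Write \(b=\sum_s a_sb_s\) with \(a_s\in\F_p\), so that \(\ell_r(b)=a_r\) by Lemma~\ref{lem:coordinate_polynomials}.

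\begin{enumerate}
\item The first sum contributes
\[
\sum_r\ell_r(T)B_\Delta(b_r^qb)+\sum_r a_rB_\Delta(b_r^qT).
\]
The second piece equals \(B_\Delta(b^qT)\), because \(a_r\in\F_p\) gives \(a_r^q=a_r\) and \(B_\Delta\) is \(\F_p\)-linear. This is exactly the desired right-hand side.

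\item The first piece equals \(\sum_{r,s}a_s g_{rs}\ell_r(T)\). This is the mixed term that the quadratic correction must cancel.

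\item For \(p\ne2\), the correction \(-\tfrac12\sum g_{rs}\ell_r\ell_s\) contributes \(-\tfrac12\sum_{r,s}g_{rs}\bigl(\ell_r(T)a_s+a_r\ell_s(T)\bigr)\). By symmetry of \((g_{rs})\) this equals \(-\sum_{r,s}g_{rs}a_s\ell_r(T)\), which cancels the mixed term.

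\item For \(p=2\), the correction \(\sum_{r<s}g_{rs}\ell_r\ell_s\) contributes \(\sum_{r<s}g_{rs}\bigl(\ell_r(T)a_s+a_r\ell_s(T)\bigr)\). Using symmetry, this is \(\sum_{r\ne s}g_{rs}a_s\ell_r(T)\). Since the diagonal vanishes, it equals the full mixed term, and in characteristic two adding it cancels it.
\end{enumerate}

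I expect the main obstacle to be the vanishing diagonal for \(p=2\). It depends on reading Lemma~\ref{lem:closure_identities}(1) correctly at \(b'=b\) together with \(c_0=0\). The rest is bookkeeping with the product rule above.
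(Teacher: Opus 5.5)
Your proposal is correct and follows essentially the same route as the paper: symmetry comes from Lemma~\ref{lem:closure_identities}(2), and the \(\F_p\)-quadratic shape comes from products of additive polynomials. The difference identity follows from the product rule for \(F(T+b)-F(T)-F(b)\), with the mixed terms cancelling by symmetry of \((g_{rs})\) and, for \(p=2\), by the zero diagonal. The only cosmetic difference is that you obtain \(b_r^{q+1}\in\Delta\) from the closure identity at \(b'=b\), whereas the paper reads it off directly from \([1,b_r,c_{b_r}]^2=[1,0,b_r^{q+1}]\in U\); these are the same computation.
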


\begin{proof}
By Lemma~\ref{lem:closure_identities}\textup{(2)},
\(b_s^qb_r-b_r^qb_s\in\Delta=\ker B_\Delta\), so
\(g_{rs}=g_{sr}\).  If \(p=2\), Lemma~\ref{lem:group_law} gives
\([1,b_r,c_{b_r}]^2=[1,0,b_r^{q+1}]\in U\), so
\(b_r^{q+1}\in\Delta\) and \(g_{rr}=0\).

Each summand in \(Q_{\mathcal B}\) is a scalar multiple of a product of
additive polynomials, so \(Q_{\mathcal B}\) is \(\F_p\)-quadratic.  Let
\(b=\sum_ra_rb_r\), with \(a_r\in\F_p\).  Lemma~\ref{lem:coordinate_polynomials}
gives \(\ell_r(b)=a_r\).  Moreover,
\(B_\Delta(b_r^qb)=\sum_sg_{rs}a_s\).  Put
\(\partial_bF(T)=F(T+b)-F(T)-F(b)\).  For each \(r\), direct computation gives
\[
 \partial_b\!\left(\ell_r(T)B_\Delta(b_r^qT)\right)
 =a_rB_\Delta(b_r^qT)+\ell_r(T)B_\Delta(b_r^qb).
\]
Summing over \(r\), using
\(\sum_ra_rB_\Delta(b_r^qT)=B_\Delta(b^qT)\), and expanding the
quadratic correction give
\[
\begin{aligned}
 \partial_b\!\left(\sum_r\ell_r(T)B_\Delta(b_r^qT)\right)
 &=B_\Delta(b^qT)+\sum_{r,s}g_{rs}a_s\ell_r(T),\\
 \partial_b\!\left(-\frac12\sum_{r,s}g_{rs}\ell_r\ell_s\right)(T)
 &=-\sum_{r,s}g_{rs}a_s\ell_r(T) \qquad (p\ne2),\\
 \partial_b\!\left(\sum_{r<s}g_{rs}\ell_r\ell_s\right)(T)
 &=\sum_{r,s}g_{rs}a_s\ell_r(T) \qquad (p=2).
\end{aligned}
\]
The formulas for the two quadratic correction terms use symmetry of
\((g_{rs})\); the characteristic-two formula also uses the zero diagonal of
\((g_{rs})\).
Thus the mixed terms cancel when \(p\ne2\), while in characteristic two they
occur twice and vanish.  This proves
\eqref{eq:quadratic_primitive_difference}.
\end{proof}

We now insert the fibre values.  If \(V=\{0\}\), take
\(\mathcal B=\varnothing\) and put \(\Theta_{\mathcal B}=0\).
If \(V\ne\{0\}\), set
\begin{equation}
\label{eq:explicit_qa_theta}
 \Theta_{\mathcal B}(T)=Q_{\mathcal B}(T)+
 \sum_{r=1}^{\nu}
 \bigl(B_\Delta(c_{b_r})-Q_{\mathcal B}(b_r)\bigr)
 \ell_r(T).
\end{equation}

Since \(\ker B_\Delta=\Delta\), the coefficients in
\eqref{eq:explicit_qa_theta} depend only on the cosets \(c_{b_r}+\Delta\).
Set \(F_{\mathcal B}(T)=T^{q+1}-A_\Delta(\Theta_{\mathcal B}(T))\),
\(X=B_V(x)\), and
\(Z_{\mathcal B}=B_\Delta(y)-\Theta_{\mathcal B}(x)\).

To show that the resulting quotient polynomial remains
\(\F_p\)-quadratic, we use the following lemma, applied below with \(B=B_V\).

\begin{lemma}
\label{lem:quadratic_additive_descent}
Let \(B\in\F_{q^2}[T]\) be monic and additive, and let
\(P\in\F_{q^2}[S]\).  If \(P(B(T))\) is \(\F_p\)-quadratic,
then \(P\) is \(\F_p\)-quadratic.
\end{lemma}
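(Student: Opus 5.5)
The plan is a leading-monomial argument based on the base-\(p\) digit sum of exponents. For an integer \(k\ge1\) write \(w(k)\) for the sum of its base-\(p\) digits. A monomial \(T^k\) with \(k\ge1\) is of the form \(T^{p^i}\) or \(T^{p^i+p^j}\) if and only if \(w(k)\in\{1,2\}\). Note that \(T^{2p^i}=T^{p^i+p^i}\) is allowed. Hence a polynomial is \(\F_p\)-quadratic exactly when it has no constant term and every monomial \(T^k\) occurring in it satisfies \(w(k)\le2\). I will prove both conditions for \(P(S)=\sum_k a_kS^k\).

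The constant term is immediate. Since \(B\) is additive, \(B(0)=0\), so the constant term of \(P(B(T))\) equals \(P(0)=a_0\). Because \(P(B(T))\) is \(\F_p\)-quadratic, this forces \(a_0=0\).

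For the weight condition, I first record two facts about powers of \(B\). Write \(B(T)=T^{p^m}+\sum_{i<m}\beta_iT^{p^i}\), and let \(k=\sum_i d_ip^i\) in base \(p\).
\begin{enumerate}
\item Every monomial of \(B(T)^k\) has exponent \(e\) with \(w(e)\le w(k)\). To see this, factor \(B(T)^k=\prod_i\bigl(B(T)^{p^i}\bigr)^{d_i}\). Each \(B(T)^{p^i}\) is a sum of monomials with exponents that are powers of \(p\). So each monomial of \(B(T)^k\) has an exponent that is a sum of \(w(k)\) powers of \(p\). Adding numbers in base \(p\) can only lower the digit sum, because each carry replaces \(p\) units by one.
\item The leading monomial of \(B(T)^k\) is \(T^{p^mk}\), with coefficient \(1\), and \(w(p^mk)=w(k)\).
\end{enumerate}

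Now suppose, for contradiction, that some \(a_k\ne0\) with \(w(k)\ge3\). Take \(k\) to be the largest such exponent, and examine the coefficient of \(T^{p^mk}\) in \(P(B(T))=\sum_{k'}a_{k'}B(T)^{k'}\). Split the terms by \(k'\).
\begin{enumerate}
\item Terms with \(k'<k\) have degree \(p^mk'<p^mk\), so they do not contribute.
\item Terms with \(k'>k\) and \(a_{k'}\ne0\) must have \(w(k')\le2\), by the maximality of \(k\). By fact (1), they contain only monomials of weight at most \(2\), so they cannot produce \(T^{p^mk}\), whose weight is at least \(3\).
\item The term with \(k'=k\) contributes exactly \(a_k\), by fact (2).
\end{enumerate}
So \(T^{p^mk}\) appears in \(P(B(T))\) with coefficient \(a_k\ne0\), and its exponent has weight \(w(k)\ge3\). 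This contradicts the hypothesis that \(P(B(T))\) is \(\F_p\)-quadratic. Hence every \(a_k\ne0\) has \(w(k)\le2\), and \(P\) is \(\F_p\)-quadratic.

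The only delicate step is fact (1): one must check that carries in base-\(p\) addition never increase the digit sum. This follows from \(w(a+b)=w(a)+w(b)-(p-1)\cdot(\text{number of carries})\).
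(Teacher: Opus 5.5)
Your proof is correct and is essentially the paper's argument. The paper inducts on \(\deg P\): it reads off the digit sum of the leading exponent \(dp^r\), then subtracts \(cB(T)^d\), which is \(\F_p\)-quadratic as a Frobenius power of \(B\) or a product of two such powers. Your choice of the largest exponent \(k\) with \(w(k)\ge3\), together with your weight bound on the monomials of \(B(T)^{k'}\), is the same induction written out as a single maximal-counterexample step.
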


\begin{proof}
Since \(B(0)=0\), the hypothesis gives \(P(0)=0\).  If \(P=0\),
there is nothing to prove.  Assume \(P\ne0\) and proceed by induction on
\(\deg P\).  Let \(cS^d\) be the leading
term of \(P\), and write \(\deg B=p^r\).  The leading term of
\(P(B(T))\) is \(cT^{dp^r}\).  Since \(P(B(T))\) is
\(\F_p\)-quadratic, the sum of the base-\(p\) digits of \(dp^r\) is at
most two.  Multiplication by \(p^r\) only shifts the base-\(p\)
expansion, so the same holds for \(d\).  Thus \(d=p^i\) or
\(d=p^i+p^j\) for suitable \(i,j\), and \(B(T)^d\) is either a
Frobenius power of \(B(T)\) or a product of two such powers.  It is
therefore \(\F_p\)-quadratic.  Consequently,
\(P(B(T))-cB(T)^d\) is \(\F_p\)-quadratic, and the induction
hypothesis applies to \(P(S)-cS^d\).
\end{proof}

\begin{samepage}
\begin{theorem}
\label{thm:quadratic_additive_normal_form}
With the preceding notation, the following hold.
\begin{enumerate}
\item If \(V\ne\{0\}\), then
\begin{equation}
\label{eq:explicit_qa_difference}
 \Theta_{\mathcal B}(T+b)-\Theta_{\mathcal B}(T)
 =B_\Delta(b^qT+c_b)\qquad(b\in V).
\end{equation}

\item There is a unique \(\Psi_{\mathcal B}\in\F_{q^2}[S]\) such that
\(F_{\mathcal B}(T)=\Psi_{\mathcal B}(B_V(T))\).  Both
\(\Theta_{\mathcal B}\) and \(\Psi_{\mathcal B}\) are
\(\F_p\)-quadratic, and
\[
 H^U=\F_{q^2}(X,Z_{\mathcal B}),\qquad
 A_\Delta(Z_{\mathcal B})=\Psi_{\mathcal B}(X).
\]
\end{enumerate}
\end{theorem}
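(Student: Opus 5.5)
Part (1) comes first, by combining Lemma~\ref{lem:quadratic_primitive} with the fibre values. Put \(L(T)=\sum_r(B_\Delta(c_{b_r})-Q_{\mathcal B}(b_r))\ell_r(T)\). By Lemma~\ref{lem:coordinate_polynomials}, \(L\) is additive and \(L(b_r)=B_\Delta(c_{b_r})-Q_{\mathcal B}(b_r)\). Because \(L\) is additive, \(\Theta_{\mathcal B}(T+b)-\Theta_{\mathcal B}(T)=Q_{\mathcal B}(T+b)-Q_{\mathcal B}(T)+L(b)\). By \eqref{eq:quadratic_primitive_difference} this equals
\[
B_\Delta(b^qT)+\Theta_{\mathcal B}(b).
\]
It therefore suffices to show \(\Theta_{\mathcal B}(b)=B_\Delta(c_b)\) for every \(b\in V\); additivity of \(B_\Delta\) then gives \eqref{eq:explicit_qa_difference}.

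Let \(\phi(b)=\Theta_{\mathcal B}(b)-B_\Delta(c_b)\) on \(V\). Setting \(T=b'\) in the identity just derived gives \(\Theta_{\mathcal B}(b+b')=\Theta_{\mathcal B}(b)+\Theta_{\mathcal B}(b')+B_\Delta(b^qb')\). The identity for \(B_\Delta(c_{b+b'})\) preceding Lemma~\ref{lem:quadratic_primitive} has exactly the same cross term. Subtracting, \(\phi\) is additive, hence \(\F_p\)-linear on \(V\). By construction \(\phi(b_r)=0\) for every \(r\), so \(\phi\equiv0\). This step is the heart of the proof, and it is short once the two cocycle identities are lined up.

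For part~(2), the case \(V=\{0\}\) is immediate: \(\Theta_{\mathcal B}=0\), \(B_V(T)=T\), \(\Psi_{\mathcal B}=F_{\mathcal B}\), and everything reduces to the central computation. When \(V\ne\{0\}\), part~(1) shows that \(\Theta_{\mathcal B}\) solves the same difference equations as \(\Theta_U\). By Lemma~\ref{lem:wild_coordinate_freedom}, \(\Theta_{\mathcal B}=\Theta_U+h(B_V)\). Hence \(F_{\mathcal B}(T)=\Psi_U(B_V(T))-A_\Delta(h(B_V(T)))\), which gives \(\Psi_{\mathcal B}=\Psi_U-A_\Delta\circ h\). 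Uniqueness follows from injectivity of substitution by \(B_V\). We also get \(Z_{\mathcal B}=Z-h(X)\), so \(\F_{q^2}(X,Z_{\mathcal B})=\F_{q^2}(X,Z)=H^U\) by Theorem~\ref{thm:wild_fixed_field}. Substituting \(x\) and using \(y^q+y=x^{q+1}\) together with \(A_\Delta\circ B_\Delta=T^q+T\) gives \(A_\Delta(Z_{\mathcal B})=\Psi_{\mathcal B}(X)\).

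The \(\F_p\)-quadraticity is checked as follows. \(\Theta_{\mathcal B}\) is \(Q_{\mathcal B}\) plus an additive polynomial, so it is \(\F_p\)-quadratic by Lemma~\ref{lem:quadratic_primitive}. Next, \(A_\Delta\) is additive, say \(A_\Delta(S)=\sum a_iS^{p^i}\). Thus \(A_\Delta(\Theta_{\mathcal B})\) is a linear combination of \(p^i\)-th powers of an \(\F_p\)-quadratic polynomial, and it stays in the class by the closure remarks. Since \(T^{q+1}=T^{q}T\) is \(\F_p\)-quadratic, so is \(F_{\mathcal B}=\Psi_{\mathcal B}(B_V(T))\). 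Lemma~\ref{lem:quadratic_additive_descent}, applied with the monic additive \(B=B_V\), then shows that \(\Psi_{\mathcal B}\) is \(\F_p\)-quadratic.
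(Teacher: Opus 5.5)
Your proposal is correct and follows the paper's proof essentially step for step. The paper shows that \(d_b=B_\Delta(c_b)-Q_{\mathcal B}(b)\) is \(\F_p\)-linear on \(V\), while you show that \(\Theta_{\mathcal B}(b)-B_\Delta(c_b)=L(b)-d_b\) is additive; this is the same cocycle comparison, and your part~(2) matches the paper exactly via Lemma~\ref{lem:wild_coordinate_freedom}, injectivity of substitution by \(B_V\), and Lemma~\ref{lem:quadratic_additive_descent}.
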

\end{samepage}

\begin{proof}
If \(V=\{0\}\), then \(U=U_\Delta\), \(B_V(T)=T\), and
\(\Theta_{\mathcal B}=0\).  Part~\textup{(2)} holds with
\(\Psi_{\mathcal B}(S)=S^{q+1}\), and the fixed-field equality follows
from \(A_\Delta(B_\Delta(y))=y^q+y=x^{q+1}\).  Assume henceforth that
\(V\ne\{0\}\).

By Lemma~\ref{lem:quadratic_primitive}, part~\textup{(1)} reduces to
showing that \(\Theta_{\mathcal B}(b)=B_\Delta(c_b)\) for every
\(b\in V\).  Put \(d_b=B_\Delta(c_b)-Q_{\mathcal B}(b)\).  For
\(b,b'\in V\), Lemma~\ref{lem:closure_identities}\textup{(1)} and
\eqref{eq:quadratic_primitive_difference} give
\[
 d_{b+b'}-d_b-d_{b'}
 =B_\Delta(b'^qb)-B_\Delta(b^qb')=0,
\]
where the last equality follows from
Lemma~\ref{lem:closure_identities}\textup{(2)} and
\(\ker B_\Delta=\Delta\).  Hence \(d:V\to\F_{q^2}\) is
\(\F_p\)-linear.  Since the \(\ell_r\) are the coordinate functions
associated with \(\mathcal B\), we have
\(d_b=\sum_r d_{b_r}\ell_r(b)\).  Substituting this equality into
\eqref{eq:explicit_qa_theta} and using
Lemma~\ref{lem:quadratic_primitive} gives
\eqref{eq:explicit_qa_difference}.

By part~\textup{(1)} and Lemma~\ref{lem:wild_coordinate_freedom}, there
is \(h\in\F_{q^2}[S]\) such that
\(\Theta_{\mathcal B}=\Theta_U+h\circ B_V\).
Then \(Z_{\mathcal B}=Z-h(X)\), so
\(H^U=\F_{q^2}(X,Z_{\mathcal B})\).
Setting \(\Psi_{\mathcal B}(S)=\Psi_U(S)-A_\Delta(h(S))\) gives
\(F_{\mathcal B}=\Psi_{\mathcal B}\circ B_V\).
Substituting \(T=x\) yields the stated equation, and uniqueness of
\(\Psi_{\mathcal B}\) follows because composition with \(B_V\) is injective.

Finally, \eqref{eq:explicit_qa_theta} shows that
\(\Theta_{\mathcal B}\) is \(\F_p\)-quadratic.  Since \(A_\Delta\) is additive,
\(F_{\mathcal B}(T)=T^{q+1}-A_\Delta(\Theta_{\mathcal B}(T))\) is also
\(\F_p\)-quadratic.  Lemma~\ref{lem:quadratic_additive_descent} gives the
same property for \(\Psi_{\mathcal B}\).
\end{proof}

Reducing
\(F_{\mathcal B}(T)=\Psi_{\mathcal B}(B_V(T))\) modulo \(B_V(T)-S\) shows
that \(\Psi_{\mathcal B}(S)\) is the remainder of \(F_{\mathcal B}(T)\).
Lemma~\ref{lem:wild_coordinate_freedom} and
Corollary~\ref{cor:wild_affine_model_geometry} show that
\(A_\Delta(Y)-\Psi_{\mathcal B}(S)\) is absolutely irreducible and defines
a smooth affine model of \(H^U\).  The only place not represented on this
model is \(P_\infty^U\).

We conclude this section by passing from subgroups of
\(A_1(P_\infty)\) to arbitrary \(p\)-subgroups of \(A\).

\begin{corollary}
\label{cor:all_p_subgroups_up_to_conjugacy}
Every \(p\)-subgroup \(G\le A\) is conjugate to some
\(U\le A_1(P_\infty)\).  Consequently, \(H^G\) is
isomorphic to the function field of the model in
Theorem~\ref{thm:wild_fixed_field} and, after choosing a basis of the
translation space, to the function field of the model in
Theorem~\ref{thm:quadratic_additive_normal_form}.
\end{corollary}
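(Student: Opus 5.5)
The plan is to apply Sylow's theorem in the finite group \(A=\Aut(H)\) and then transport fixed fields along the resulting conjugation.

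First, recall from Section~\ref{subsec:decomposition_sylow} that \(|A_1(P_\infty)|=q^3\) and that \(A_1(P_\infty)\) is a Sylow \(p\)-subgroup of \(A\) \cite[p.~144]{GarciaStichtenothXing2000}; this can also be checked directly from \(A\simeq\PGU(3,q)\), whose order is \(q^3(q^3+1)(q^2-1)\) with \(p\)-part \(q^3\). Given a \(p\)-subgroup \(G\le A\), Sylow's theorem places \(G\) inside some Sylow \(p\)-subgroup \(S\). All Sylow \(p\)-subgroups are conjugate, so \(S=\rho A_1(P_\infty)\rho^{-1}\) for some \(\rho\in A\). Then
\[
U:=\rho^{-1}G\rho\le A_1(P_\infty),
\]
which gives the first assertion.

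Second, we transport the fixed field. With the left-action convention \(\rho(H^J)=H^{\rho J\rho^{-1}}\) recorded in Section~\ref{sec:pointed_subgroup_data}, take \(J=U\) to get
\[
\rho(H^U)=H^{\rho U\rho^{-1}}=H^G.
\]
Since \(\rho\) fixes \(\F_{q^2}\), its restriction is an \(\F_{q^2}\)-isomorphism \(H^U\to H^G\). As a check, for \(\sigma\in U\) and \(f\in H^U\) we have \((\rho\sigma\rho^{-1})(\rho f)=\rho f\), and the inverse map is given by \(\rho^{-1}\).

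Third, we describe \(H^U\) using the earlier results. Theorem~\ref{thm:wild_fixed_field} gives \(H^U=\F_{q^2}(X,Z)\) with \(A_\Delta(Z)=\Psi_U(X)\), and Corollary~\ref{cor:wild_affine_model_geometry} identifies this as the function field of that affine model. After fixing a basis \(\mathcal B\) of \(V_U\), Theorem~\ref{thm:quadratic_additive_normal_form} gives the \(\F_p\)-quadratic model instead. Composing either description with \(\rho\) yields the claimed isomorphisms for \(H^G\).

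The only step that needs any care is the Sylow claim, i.e.\ that \(q^3\) is the full \(p\)-part of \(|A|\). That is exactly what the cited fact (or the order of \(\PGU(3,q)\)) supplies, so there is no real obstacle.
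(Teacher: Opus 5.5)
Your proposal is correct and follows essentially the same argument as the paper: Sylow's theorem puts \(G\) inside a conjugate of \(A_1(P_\infty)\), and the conjugating automorphism carries \(H^U\) isomorphically onto \(H^G\) via \(\rho(H^J)=H^{\rho J\rho^{-1}}\), after which Theorems~\ref{thm:wild_fixed_field} and~\ref{thm:quadratic_additive_normal_form} apply to \(U\). The differences are cosmetic: you conjugate by \(\rho^{-1}\) rather than \(\rho\), and you add a check that \(q^3\) is the full \(p\)-part of \(|A|\).
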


\begin{proof}
By the Sylow theorems, \(G\) is contained in a conjugate of
\(A_1(P_\infty)\).  Hence there is \(\rho\in A\) such that
\(U:=\rho G\rho^{-1}\le A_1(P_\infty)\).  Then \(\rho(H^G)=H^U\), and the
conclusion follows from Theorems~\ref{thm:wild_fixed_field}
and~\ref{thm:quadratic_additive_normal_form}.
\end{proof}

\section{Fixed subfields of arbitrary subgroups of the decomposition group}
\label{sec:cyclic_scalar_part}

We now treat arbitrary subgroups of \(A(P_\infty)\).  We first place a
subgroup with nontrivial scalar image in diagonal normal form and write \(U\)
for its normal Sylow \(p\)-subgroup.  We then take cyclic invariants of the two
generators of \(H^U\) and transport them back by conjugation.

\subsection{A diagonal normal form for subgroups with nontrivial scalar image}
\label{subsec:diagonal_normal_form}

The following lemma gives an explicit conjugation of a finite-order element
whose scalar coordinate has the same order.

\begin{lemma}
\label{lem:explicit_tame_diagonalization}
Let \(\gamma=[a,b,c]\in A(P_\infty)\) have order \(m>1\), and assume
that \(a\) also has order \(m\).  Put \(u_0=b/(1-a)\).  If
\(a^{q+1}\ne1\), set
\[
        v_0=-\frac{c+a(1-a^q)u_0^{q+1}}{a^{q+1}-1};
\]
if \(a^{q+1}=1\), choose any \(v_0\in\F_{q^2}\) satisfying
\(v_0^q+v_0=u_0^{q+1}\).  Then
\(\rho=[1,u_0,v_0]\in A_1(P_\infty)\) and
\[
        \rho\gamma\rho^{-1}=[a,0,0].
\]
\end{lemma}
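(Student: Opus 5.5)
The plan is a direct computation with the group law of Lemma~\ref{lem:group_law}, organised in three steps: check that \(\rho\) lies in \(A_1(P_\infty)\), compute \(\rho\gamma\rho^{-1}\) explicitly, and then kill its third coordinate. The order hypothesis is needed only in the last step, and only when \(a^{q+1}=1\). Since \(m>1\) and \(a\) has order \(m\), we have \(a\ne1\), so \(u_0=b/(1-a)\) is defined.

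\emph{Step 1: \(\rho\in A_1(P_\infty)\).} I will verify \(v_0^q+v_0=u_0^{q+1}\). In the case \(a^{q+1}=1\) this holds by the choice of \(v_0\). Otherwise put \(N=a^{q+1}\in\F_q\), \(N\ne1\), and \(n=c+a(1-a^q)u_0^{q+1}\). Using \(c^q+c=b^{q+1}=(1-a)^{q+1}u_0^{q+1}\), I expect
\[
n^q+n=\bigl(1-a-a^q+N+a+a^q-2N\bigr)u_0^{q+1}=(1-N)u_0^{q+1}.
\]
Since \(v_0=-n/(N-1)\), this gives \(v_0^q+v_0=u_0^{q+1}\).

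\emph{Step 2: compute the conjugate.} Using the inverse \(\rho^{-1}=[1,-u_0,u_0^{q+1}-v_0]\) and the product formula twice, I get \(\rho\gamma\rho^{-1}=[a,\,b-(1-a)u_0,\,w]\). Here
\[
w=(a^{q+1}-1)v_0+c+ab^qu_0-b\,u_0^q-a u_0^{q+1}+u_0^{q+1}.
\]
The second coordinate vanishes by the choice of \(u_0\). Substituting \(b=(1-a)u_0\), the \(u_0^{q+1}\)-terms should collapse to
\[
w=(a^{q+1}-1)v_0+c+a(1-a^q)u_0^{q+1}.
\]
This is the only genuinely computational point. In the case \(a^{q+1}\ne1\), the definition of \(v_0\) makes \(w=0\).

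\emph{Step 3: the case \(a^{q+1}=1\).} Here \(w=c+a(1-a^q)u_0^{q+1}\) does not depend on \(v_0\), so the order hypothesis must be used. Since \(\rho\gamma\rho^{-1}=[a,0,w]\) lies in \(A(P_\infty)\), we have \(w\in\Tcal\). By the product formula with \(a^{q+1}=1\), induction gives \([a,0,w]^k=[a^k,0,kw]\). Conjugation preserves order, so \([a,0,w]^m=[1,0,mw]\) is the identity, which forces \(mw=0\). Because \(m\) divides \(|\F_{q^2}^*|=q^2-1\), it is prime to \(p\), so \(w=0\).

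I expect this last case to be the main obstacle. It is the one place where the hypothesis that \(\gamma\) and \(a\) have the same order enters, and one must notice that \(m\) is invertible in \(\F_{q^2}\).
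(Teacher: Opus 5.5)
Your proposal is correct and follows the paper's proof step for step. It uses the same trace computation \(n^q+n=(1-a^{q+1})u_0^{q+1}\) to get \(\rho\in A_1(P_\infty)\), the same closed form \((a^{q+1}-1)v_0+c+a(1-a^q)u_0^{q+1}\) for the third coordinate of \(\rho\gamma\rho^{-1}\), and the same argument \([a,0,w]^m=[1,0,mw]\) with \(p\nmid m\) when \(a^{q+1}=1\). The one detail you leave implicit is that a \(v_0\) with \(v_0^q+v_0=u_0^{q+1}\) actually exists in that case; it does, because \(u_0^{q+1}\in\F_q\) and the trace \(\F_{q^2}\to\F_q\) is surjective.
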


\begin{proof}
We first verify that \(\rho\) belongs to \(A_1(P_\infty)\).  When
\(a^{q+1}=1\), the required \(v_0\) exists because
\(u_0^{q+1}\in\F_q\) and the trace map \(S\mapsto S^q+S\) from
\(\F_{q^2}\) onto \(\F_q\) is surjective.  When \(a^{q+1}\ne1\), put
\(\kappa=c+a(1-a^q)u_0^{q+1}\).  Since
\(c^q+c=b^{q+1}\), \(b=(1-a)u_0\), and
\(u_0^{q+1}\in\F_q\),
\[
\begin{aligned}
\kappa^q+\kappa
&=c^q+c+\bigl(a^q(1-a)+a(1-a^q)\bigr)u_0^{q+1}\\
&=\bigl((1-a)^{q+1}+a^q(1-a)+a(1-a^q)\bigr)u_0^{q+1}\\
&=(1-a^{q+1})u_0^{q+1}.
\end{aligned}
\]
Consequently, the specified value of \(v_0\) satisfies
\[
        v_0^q+v_0
        =-\frac{\kappa^q+\kappa}{a^{q+1}-1}
        =u_0^{q+1}.
\]
Thus \(\rho=[1,u_0,v_0]\in A_1(P_\infty)\) in both cases.

We now compute the conjugate.  Lemma~\ref{lem:group_law} and
\(b=(1-a)u_0\) give
\[
        \rho\gamma\rho^{-1}
        =[a,0,(a^{q+1}-1)v_0+c+a(1-a^q)u_0^{q+1}].
\]
If \(a^{q+1}\ne1\), the third coordinate is zero by the definition of
\(v_0\).  Suppose that \(a^{q+1}=1\), and denote the third coordinate
by \(\delta_0\).  Then
\[
        [a,0,\delta_0]^n=[a^n,0,n\delta_0]\qquad(n\ge1),
\]
so \((\rho\gamma\rho^{-1})^m=[1,0,m\delta_0]=1\).  Since
\(m\mid q^2-1\), we have \(p\nmid m\), and hence \(\delta_0=0\).
\end{proof}

Let \(G_0\le A(P_\infty)\), put
\(U_0=G_0\cap A_1(P_\infty)\), and set \(m=[G_0:U_0]\).  The scalar map
identifies \(G_0/U_0\) with a subgroup of the cyclic group \(\F_{q^2}^*\),
so \(m\mid q^2-1\) and \((|U_0|,m)=1\).  When \(m>1\), choose
\(\widetilde\gamma\in G_0\) whose image generates \(G_0/U_0\), and put
\(\gamma=\widetilde\gamma^{|U_0|}=[a,b,c]\).

\begin{proposition}
\label{prop:normal_form}
With the preceding notation, \(G_0=U_0\) if \(m=1\).  If \(m>1\), then
\(\gamma\) and \(a\) both have order \(m\),
\(G_0=U_0\rtimes\langle\gamma\rangle\), and the element
\(\rho=[1,u_0,v_0]\) of
Lemma~\ref{lem:explicit_tame_diagonalization} satisfies
\[
        \rho G_0\rho^{-1}
        =(\rho U_0\rho^{-1})\rtimes\langle[a,0,0]\rangle.
\]
\end{proposition}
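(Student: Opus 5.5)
The plan is to argue purely group-theoretically from the scalar homomorphism \([a,b,c]\mapsto a\), whose kernel on \(G_0\) is \(U_0\), and then invoke Lemma~\ref{lem:explicit_tame_diagonalization}.

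If \(m=1\), then \(G_0/U_0\) is trivial, so \(G_0=U_0\) and there is nothing further to show.

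Assume now \(m>1\). Write \(\widetilde\gamma=[\widetilde a,\ast,\ast]\). The image of \(\widetilde\gamma\) generates \(G_0/U_0\), a cyclic group of order \(m\), so \(\widetilde a\) has order \(m\). Since \(\gcd(|U_0|,m)=1\), the scalar \(a=\widetilde a^{|U_0|}\) also has order \(m\), and \(\gamma U_0\) still generates \(G_0/U_0\).

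Next I compute the order of \(\gamma\).
\begin{itemize}
\item The scalar map sends \(\gamma^m\) to \(a^m=1\), so \(\gamma^m\in U_0\).
\item Hence \(\gamma^{m|U_0|}=1\), that is, \(\widetilde\gamma^{|U_0|^2m}=1\). On its own this does not give the order of \(\gamma\) directly.
\item Cleaner: \(\widetilde\gamma^m\in U_0\), so \(\widetilde\gamma^{m|U_0|}=1\), and therefore \(\gamma^m=\widetilde\gamma^{|U_0|m}=1\).
\item The order of \(\gamma\) is at least the order of \(a\), which is \(m\).
\end{itemize}
So \(\gamma\) has order exactly \(m\).

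Now the semidirect product decomposition. The scalar map is injective on \(\langle\gamma\rangle\), because \(\gamma\) and \(a\) have the same order \(m\); hence \(\langle\gamma\rangle\cap U_0=1\). Also \(U_0\triangleleft G_0\), being the kernel of the scalar map restricted to \(G_0\). Since \(\gamma\) maps onto a generator of \(G_0/U_0\), we get \(U_0\langle\gamma\rangle=G_0\). Therefore \(G_0=U_0\rtimes\langle\gamma\rangle\).

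Finally, apply Lemma~\ref{lem:explicit_tame_diagonalization} to \(\gamma\). Its hypotheses (\(\gamma\) of order \(m>1\), with scalar \(a\) of order \(m\)) were verified above. It gives \(\rho\in A_1(P_\infty)\) with \(\rho\gamma\rho^{-1}=[a,0,0]\). Conjugation by \(\rho\) is an automorphism of \(A(P_\infty)\), so it carries the decomposition \(U_0\rtimes\langle\gamma\rangle\) to \((\rho U_0\rho^{-1})\rtimes\langle[a,0,0]\rangle\).

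No step is a real obstacle. The only point needing care is the order computation for \(\gamma\), which uses the coprimality \((|U_0|,m)=1\) in both directions.
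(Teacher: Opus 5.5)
Your proof is correct and follows essentially the same route as the paper: Lagrange's theorem in \(U_0\) gives \(\gamma^m=1\), coprimality of \(|U_0|\) and \(m\) keeps the image of \(\gamma\) in the cyclic quotient \(G_0/U_0\) of order \(m\), so \(\langle\gamma\rangle\) is a complement to \(U_0\), and Lemma~\ref{lem:explicit_tame_diagonalization} then conjugates it to \(\langle[a,0,0]\rangle\). The only differences are cosmetic. You obtain \(\operatorname{ord}(a)=m\) first and use it as a lower bound for \(\operatorname{ord}(\gamma)\), whereas the paper works with the coset \(\gamma U_0\) and reads off \(a\) afterwards. You should also delete the first two bullets of your order computation, which are a false start superseded by the third.
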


\begin{proof}
If \(m=1\), then \(G_0=U_0\).  Assume \(m>1\).  The subgroup \(U_0\) is
normal in \(G_0\), being the kernel of the scalar map restricted to
\(G_0\).  Since \(\widetilde\gamma^m\in U_0\), Lagrange's theorem gives
\(\gamma^m=(\widetilde\gamma^m)^{|U_0|}=1\).  On the other hand, the
coset \(\gamma U_0=(\widetilde\gamma U_0)^{|U_0|}\) has order \(m\),
since \((|U_0|,m)=1\).  Therefore
\[
        m=\operatorname{ord}(\gamma U_0)
        \mid\operatorname{ord}(\gamma)\mid m,
\]
so \(\gamma\) has order \(m\).  Under the natural identification of
\(G_0/U_0\) with the image of the scalar map, the coset \(\gamma U_0\)
corresponds to \(a\), so \(a\) also has order \(m\).

Thus \(\langle\gamma\rangle\) maps isomorphically onto \(G_0/U_0\), giving
\(G_0=U_0\rtimes\langle\gamma\rangle\).
Lemma~\ref{lem:explicit_tame_diagonalization} gives
\(\rho\gamma\rho^{-1}=[a,0,0]\), and conjugation by \(\rho\) yields the
stated form.
\end{proof}

To apply the construction of Section~\ref{sec:wild_fixed_field} to the
normalized Sylow \(p\)-subgroup, we record how its subgroup data change
under conjugation.

For \(m=1\), put \(\rho=1\) and \(U=U_0\).  For \(m>1\), retain the
element \(\rho\) of Proposition~\ref{prop:normal_form} and put
\(U=\rho U_0\rho^{-1}\).  When \(m>1\),
Lemma~\ref{lem:group_law} gives
\[
        \rho[1,\beta,c]\rho^{-1}
        =[1,\beta,c+\beta^qu_0-\beta u_0^q].
\]
The second coordinate is unchanged, while setting \(\beta=0\) shows
that the central subgroup is fixed pointwise.  Thus
\(U=\rho U_0\rho^{-1}\) has the same translation and central spaces
as \(U_0\).  If
\(\beta\mapsto[1,\beta,\widetilde c_\beta]\) is a section for \(U_0\),
then \(\beta\mapsto[1,\beta,c_\beta]\) is a section for \(U\), where
\(c_\beta=\widetilde c_\beta+\beta^qu_0-\beta u_0^q\).

\begin{lemma}
\label{lem:tame_compatibility}
Let \(U\le A_1(P_\infty)\) have \(p\)-subgroup data
\((V,\Delta;c_b+\Delta)\), and let \(a\in\F_{q^2}^*\).  Then
\([a,0,0]\) normalizes \(U\) if and only if
\[
        aV=V,
        \qquad
        a^{q+1}\Delta=\Delta,
        \qquad
        c_{ab}\equiv a^{q+1}c_b\pmod{\Delta}
        \quad (b\in V).
\]
\end{lemma}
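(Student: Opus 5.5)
The plan is to compute the conjugate $[a,0,0]\,g\,[a,0,0]^{-1}$ of a general element $g=[1,b,c]\in U$ with the product formula of Lemma~\ref{lem:group_law}, and then to read off the normalization condition from the $p$-subgroup data via Proposition~\ref{prop:fibre_decomposition}.

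\emph{The conjugation formula.} The inverse of $\sigma=[a,0,0]$ is $\sigma^{-1}=[a^{-1},0,0]$, since $[a,0,0][a^{-1},0,0]=[1,0,0]$ by the product formula. Two applications of the product formula give
\[
[a,0,0][1,b,c]=[a,b,c+0],
\qquad
[a,b,c][a^{-1},0,0]=[1,a^{-1}b,a^{-(q+1)}c].
\]
The first product has third coordinate $a_2^{q+1}c_1+a_2b_2^qb_1+c_2$ with $c_1=0$ and $b_1=0$, so it equals $c$. Hence
\[
\sigma[1,b,c]\sigma^{-1}=[1,a^{-1}b,a^{-(q+1)}c].
\]
For the argument it is more convenient to use $\sigma^{-1}$ and $\sigma$ in the other order. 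By symmetry,
\[
\sigma^{-1}[1,b,c]\sigma=[1,ab,a^{q+1}c].
\]
Since $U$ is finite, $\sigma$ normalizes $U$ if and only if $\sigma^{-1}U\sigma\subseteq U$. That is, the map $[1,b,c]\mapsto[1,ab,a^{q+1}c]$ must send $U$ into $U$.

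\emph{Forward direction.} Assume $\sigma$ normalizes $U$. Applying $\pi$ gives $aV\subseteq V$, and hence $aV=V$ since both sides have the same finite size. Taking $b=0$ gives $a^{q+1}\Delta\subseteq\Delta$, hence equality. Applying the map to $s(b)=[1,b,c_b]$ gives $[1,ab,a^{q+1}c_b]\in U$. By Proposition~\ref{prop:fibre_decomposition}, this forces $a^{q+1}c_b\in c_{ab}+\Delta$.

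\emph{Converse.} Assume the three conditions. Each element of $U$ has the form $[1,b,c_b+\delta]$ with $b\in V$ and $\delta\in\Delta$. Its image is $[1,ab,a^{q+1}c_b+a^{q+1}\delta]$. Here $ab\in V$, and the third coordinate lies in $c_{ab}+\Delta+a^{q+1}\Delta=c_{ab}+\Delta$. Proposition~\ref{prop:fibre_decomposition} then places the image in $U$. The inclusion $\sigma^{-1}U\sigma\subseteq U$ together with equal cardinalities gives normalization.

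The only step needing care is the bookkeeping in the product formula, in particular the order of composition. Conjugating by $\sigma$ instead of $\sigma^{-1}$ simply replaces $a$ by $a^{-1}$, and the conditions are symmetric under this substitution because each is an equality of finite sets.
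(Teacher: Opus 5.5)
Your proof is correct and follows the paper's argument. Both compute \([a,0,0]^{-1}[1,b,c_b+\delta][a,0,0]=[1,ab,a^{q+1}(c_b+\delta)]\) from the product formula and read off the three conditions using the fibre decomposition of Proposition~\ref{prop:fibre_decomposition}. You spell out the two directions and the finite-cardinality step that the paper leaves implicit; your closing symmetry remark is unnecessary, since you already work with the equivalent condition \(\sigma^{-1}U\sigma\subseteq U\).
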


\begin{proof}
For \(b\in V\) and \(\delta\in\Delta\),
\[
        [a,0,0]^{-1}[1,b,c_b+\delta][a,0,0]
        =[1,ab,a^{q+1}(c_b+\delta)].
\]
Hence \([a,0,0]\) normalizes \(U\) if and only if \(aV=V\) and
\(a^{q+1}(c_b+\Delta)=c_{ab}+\Delta\) for every \(b\in V\).  This equality is
equivalent to \(a^{q+1}\Delta=\Delta\) and
\(c_{ab}\equiv a^{q+1}c_b\pmod\Delta\).
\end{proof}

\subsection{Scalar action on the \texorpdfstring{\(p\)}{p}-subgroup invariants}
\label{subsec:scalar_action_invariants}

Let \(G=U\rtimes\langle\gamma\rangle\) be in the diagonal normal form of
Proposition~\ref{prop:normal_form}, where \(\gamma=[a,0,0]\) and \(a\)
has order \(m>1\).  If \((V,\Delta;c_b+\Delta)\) is the \(p\)-subgroup
data of \(U\), Lemmas~\ref{lem:tame_compatibility}
and~\ref{lem:scalar_action} give the first two of the following scaling
identities:
\begin{equation}
\label{eq:scalar_scaling_identities}
\begin{aligned}
B_V(aS)&=aB_V(S),
&B_\Delta(a^{q+1}S)&=a^{q+1}B_\Delta(S),\\
a^{q+1}\Gamma_\Delta&=\Gamma_\Delta,
&A_\Delta(a^{q+1}S)&=a^{q+1}A_\Delta(S).
\end{aligned}
\end{equation}
Here the third identity follows from the second and
\(a^{q+1}\Tcal=\Tcal\), since \(a^{q+1}\in\F_q^*\), and the fourth is
Lemma~\ref{lem:scalar_action} applied to \(\Gamma_\Delta\).

\begin{lemma}
\label{lem:tame_equivariance}
With \(X,Z,\Psi_U\) as in Theorem~\ref{thm:wild_fixed_field}, the
correction polynomial \(\Theta_U\) of
Proposition~\ref{prop:wild_invariant_coordinates} satisfies
\begin{equation}
\label{eq:theta_scalar_scaling}
        \Theta_U(aS)=a^{q+1}\Theta_U(S).
\end{equation}
Consequently,
\[
 \gamma(X)=aX,\qquad
 \gamma(Z)=a^{q+1}Z,\qquad
 \Psi_U(aS)=a^{q+1}\Psi_U(S).
\]
\end{lemma}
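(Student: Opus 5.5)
We will prove \eqref{eq:theta_scalar_scaling} directly from the defining sum \eqref{eq:Theta}, and then derive the three consequences.

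\emph{Step 1: scaling of \(\lambda_b\).} Since \(\gamma=[a,0,0]\) normalizes \(U\), Lemma~\ref{lem:tame_compatibility} gives \(aV=V\) and \(c_{ab}\equiv a^{q+1}c_b\pmod{\Delta}\). Because \(\lambda_{ab}\) depends only on the coset \(c_{ab}+\Delta\), we get
\[
\lambda_{ab}(aS)=B_\Delta\bigl(a^qb^q\,aS+a^{q+1}c_b\bigr)=B_\Delta\bigl(a^{q+1}(b^qS+c_b)\bigr)=a^{q+1}\lambda_b(S),
\]
using the second identity in \eqref{eq:scalar_scaling_identities}.

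\emph{Step 2: scaling of the sum.} In \eqref{eq:Theta} we substitute \(T=aS\) and reindex the sum by \(b=ab'\) with \(b'\in V\); this is legitimate because \(aV=V\). This gives
\[
-B'_V\,\Theta_U(aS)=\sum_{b'\in V}\lambda_{ab'}(aS)\,(aS+ab')^{N-1}=a^{q+1}a^{N-1}\sum_{b'}\lambda_{b'}(S)(S+b')^{N-1}.
\]
It remains to show \(a^{N-1}=1\). Lemma~\ref{lem:scalar_action} with \(\eta=a\) and \(W=V\) gives \(a^N=a\), so \(a^{N-1}=1\) (the proof of that lemma derives exactly this). This yields \eqref{eq:theta_scalar_scaling}. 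If \(V=\{0\}\), then \(\Theta_U=0\) and there is nothing to prove.

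\emph{Step 3: consequences.} First, \(\gamma(x)=ax\), so \(\gamma(X)=B_V(ax)=aB_V(x)=aX\). Next, \(\gamma(y)=a^{q+1}y\), so
\[
\gamma(Z)=B_\Delta(a^{q+1}y)-\Theta_U(ax)=a^{q+1}B_\Delta(y)-a^{q+1}\Theta_U(x)=a^{q+1}Z.
\]
For \(\Psi_U\), we use its defining identity \(\Psi_U(B_V(T))=T^{q+1}-A_\Delta(\Theta_U(T))\). Replacing \(T\) by \(aT\), the left side becomes \(\Psi_U(aB_V(T))\). The right side becomes \(a^{q+1}T^{q+1}-A_\Delta(a^{q+1}\Theta_U(T))=a^{q+1}\bigl(T^{q+1}-A_\Delta(\Theta_U(T))\bigr)\), by the fourth identity in \eqref{eq:scalar_scaling_identities}. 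So \(\Psi_U(aB_V(T))=a^{q+1}\Psi_U(B_V(T))\), and injectivity of composition with \(B_V\) gives \(\Psi_U(aS)=a^{q+1}\Psi_U(S)\).

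The only delicate point is Step~1, specifically the independence of \(\lambda_b\) from the coset representative; it is precisely what lets us use the congruence \(c_{ab}\equiv a^{q+1}c_b\) modulo \(\Delta\) rather than an equality.
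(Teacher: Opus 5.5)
Your proposal is correct and follows the paper's proof essentially step for step: you scale $\lambda_b$ via Lemma~\ref{lem:tame_compatibility} and \eqref{eq:scalar_scaling_identities}, reindex the sum \eqref{eq:Theta} by $b=ab'$, and use $a^{N-1}=1$. The only cosmetic difference is in the $\Psi_U$ step: you substitute $T\mapsto aT$ in the polynomial identity $\Psi_U(B_V(T))=T^{q+1}-A_\Delta(\Theta_U(T))$ and use injectivity of composition with $B_V$, whereas the paper applies $\gamma$ to $A_\Delta(Z)=\Psi_U(X)$ and uses that $X$ is transcendental; the two arguments are equivalent.
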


\begin{proof}
Put \(N=|V|\).  For \(b\in V\), Lemma~\ref{lem:tame_compatibility}
gives \(c_{ab}\equiv a^{q+1}c_b\pmod\Delta\).  Together with
\eqref{eq:scalar_scaling_identities}, this congruence gives
\[
 \lambda_{ab}(aS)
 =B_\Delta\bigl((ab)^qaS+c_{ab}\bigr)
 =B_\Delta\bigl(a^{q+1}(b^qS+c_b)\bigr)
 =a^{q+1}\lambda_b(S).
\]
Since \(aV=V\), Lemma~\ref{lem:scalar_action} gives \(a^N=a\), and hence
\(a^{N-1}=1\).  Thus
\[
\begin{aligned}
\Theta_U(aS)
 &=-\frac1{B'_V}\sum_{b\in V}
   \lambda_b(aS)(aS+b)^{N-1}\\
 &=-\frac1{B'_V}\sum_{u\in V}
   \lambda_{au}(aS)(aS+au)^{N-1}\\
 &=-\frac{a^{q+1}a^{N-1}}{B'_V}
   \sum_{u\in V}\lambda_u(S)(S+u)^{N-1}\\
 &=a^{q+1}\Theta_U(S).
\end{aligned}
\]
The first two identities in \eqref{eq:scalar_scaling_identities} and the
identity \eqref{eq:theta_scalar_scaling} give the actions on \(X\) and \(Z\).
Applying \(\gamma\)
to \(A_\Delta(Z)=\Psi_U(X)\) and using the fourth identity in
\eqref{eq:scalar_scaling_identities} gives
\(\Psi_U(aX)=a^{q+1}\Psi_U(X)\).  Since \(X\) is transcendental over
\(\F_{q^2}\), this is the stated polynomial identity.
\end{proof}

\subsection{The fixed field and transport through conjugation}
\label{subsec:scalar_fixed_field}

The scalar weights of \(X\) and \(Z\) are \(1\) and \(q+1\).  For the
cyclic action of order \(m\), the second weight depends only on the residue
class of \(q+1\) modulo \(m\).  Choosing its least nonnegative representative
gives the exponents in the equation below.

Let \(G=U\rtimes\langle\gamma\rangle\), where \(\gamma=[a,0,0]\), the
scalar \(a\) has order \(m>1\), and \(U\) has \(p\)-subgroup data
\((V,\Delta;c_b+\Delta)\).  Let \(X,Z,\Psi_U\) be as in
Theorem~\ref{thm:wild_fixed_field}; thus
\[
 H^U=\F_{q^2}(X,Z),\qquad A_\Delta(Z)=\Psi_U(X).
\]
Let \(0\le r<m\) satisfy \(r\equiv q+1\pmod m\), put
\(T=X^m\), \(W=Z/X^r\), and write
\[
 A_\Delta(S)=\sum_i\alpha_iS^{p^i},\qquad
 \Psi_U(S)=\sum_j\psi_jS^j.
\]
Set \(I_\Delta=\{i\ge0:\alpha_i\ne0\}\) and
\(J_U=\{j\ge0:\psi_j\ne0\}\).

\begin{theorem}
\label{thm:tame_fixed_field}
With the preceding notation, the following hold.
\begin{enumerate}
\item The fixed field is \(H^G=\F_{q^2}(T,W)\).

\item The generators \(T,W\) satisfy
\[
        \sum_{i\in I_\Delta}\alpha_iW^{p^i}T^{r(p^i-1)/m}
        =\sum_{j\in J_U}\psi_jT^{(j-r)/m},
\]
with nonnegative integral exponents.  Let \(F(S,Y)\) be the difference
of the two sides after replacing \(T,W\) by \(S,Y\).  Then \(F\) is
absolutely irreducible, and \(F(S,Y)=0\) is a smooth affine model of
\(H^G\).
\end{enumerate}
\end{theorem}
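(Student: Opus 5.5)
The plan is to work inside the cyclic extension \(H^U/H^G\), which has group \(G/U\simeq\langle\gamma\rangle\) of order \(m\). By Lemma~\ref{lem:tame_equivariance}, \(\gamma\) acts on \(H^U=\F_{q^2}(X,Z)\) by \(X\mapsto aX\) and \(Z\mapsto a^{q+1}Z\), where \(a\) is a primitive \(m\)-th root of unity. Since \(a^r=a^{q+1}\), the element \(W=Z/X^r\) is fixed, and \(T=X^m\) is fixed as well. Hence \(\F_{q^2}(T,W)\subseteq H^G\).

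For the reverse inclusion, I would count degrees. From the proof of Corollary~\ref{cor:wild_affine_model_geometry} we have \([H^U:\F_{q^2}(X)]=q/|\Delta|\), and \([\F_{q^2}(X):\F_{q^2}(T)]\le m\). The field \(\F_{q^2}(X,Z)\) equals \(\F_{q^2}(X,W)\) because \(Z=WX^r\). The key observation is that \(H^U=\F_{q^2}(T,W)(X)\) and that \(X\) satisfies \(S^m-T\) over \(\F_{q^2}(T,W)\). This gives \([H^U:\F_{q^2}(T,W)]\le m=[H^U:H^G]\), so \(\F_{q^2}(T,W)=H^G\).

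For the equation, substitute \(Z=WX^r\) into \(A_\Delta(Z)=\Psi_U(X)\) to get \(\sum_i\alpha_iW^{p^i}X^{rp^i}=\sum_j\psi_jX^j\), and then divide by \(X^r\). Two integrality facts are needed:
\begin{itemize}
\item \(m\mid r(p^i-1)\) for every \(i\in I_\Delta\). By the fourth identity of \eqref{eq:scalar_scaling_identities}, \(\alpha_i\ne0\) forces \(a^{(q+1)(p^i-1)}=1\), so \(m\mid (q+1)(p^i-1)\), and hence \(m\mid r(p^i-1)\).
\item \(j\equiv r\pmod m\) for every \(j\in J_U\). By Lemma~\ref{lem:tame_equivariance}, \(\psi_j\ne0\) forces \(a^j=a^{q+1}\), so \(j\equiv r\pmod m\).
\end{itemize}
It remains to check that \(j\ge r\), so that the exponents on the right are nonnegative. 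I expect this to be the main obstacle. The first attempt would use \(\Psi_U(0)=0\), which holds because \(\Theta_U(0)\) relates to \(B_\Delta(c_0)=0\). A cleaner route is to check that \(W\) is integral over \(\F_{q^2}[T]\) away from \(P_\infty\); a small-\(j\) term could otherwise produce a pole at a finite place. With these facts, both sides become polynomials in \(T=X^m\) and \(W\).

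For absolute irreducibility, \(F(S,Y)\) is monic in \(Y\) up to the unit \(\alpha_{\max}\), with degree \(q/|\Delta|\). I would show that this degree equals \([H^G:\F_{q^2}(T)]=[H^U:\F_{q^2}(X)]\cdot m/m=q/|\Delta|\). Regularity of \(H^G/\F_{q^2}\) then lets me extend constants and apply Gauss's lemma, exactly as in Corollary~\ref{cor:wild_affine_model_geometry}.

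Smoothness is harder here, because \(\partial F/\partial Y=\sum_i\alpha_ip^iW^{p^i-1}\cdots\) reduces to the term \(i=0\), namely \(\alpha_0T^{0}=\alpha_0\ne0\). This is nonzero because \(A_\Delta'\) is a nonzero constant by Lemma~\ref{lem:subspace_quotient_polynomials}. So the partial derivative in \(Y\) is a nonzero constant, and the model is smooth.
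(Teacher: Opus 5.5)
Your argument for part~(1), for the congruences \(m\mid r(p^i-1)\) and \(j\equiv r\pmod m\), and for smoothness matches the paper's. Two other steps, however, fail as written.

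\textbf{The inequality \(j\ge r\).} This is not an obstacle. You already have \(j\ge0\), \(0\le r<m\) and \(j\equiv r\pmod m\). Since \(r\) is the least nonnegative integer in its residue class, \(j\ge r\); that one line is the paper's entire argument. Both substitutes you suggest are false in general.
\begin{enumerate}
\item \(\Psi_U(0)=0\) does not follow from \(c_0=0\), because \(\Theta_U(0)=-\frac{1}{B'_V}\sum_{b\in V}B_\Delta(c_b)b^{N-1}\) involves all the fibre values. For instance, \(p=3\), \(\Delta=\{0\}\), \(V=\F_3v\), \(c_v=-v^{q+1}\), \(a=-1\) gives \(\Theta_U(0)=\Psi_U(0)=v^{q+1}\ne0\). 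The relation \(\Psi_U(aS)=a^{q+1}\Psi_U(S)\) forces \(\Psi_U(0)=0\) only when \(a^{q+1}\ne1\), and even then it excludes only \(j=0\).
\item \(W\) is not integral over \(\F_{q^2}[T]\) once \(r>0\). Take \(U=1\), \(m=q-1\) and \(q>3\). Then \(r=2\), and \(W=y/x^2\) has poles at the places of \(H\) with \(x=0\) and \(y\ne0\), consistent with the equation \(W^qT^2+W=T\).
\end{enumerate}

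\textbf{Absolute irreducibility.} The same example shows that \(F\) is not monic in \(Y\) up to a unit. When \(r>0\) and \(\Delta\ne\Tcal\), the coefficient of \(Y^{q/|\Delta|}\) is \(S^{r(q/|\Delta|-1)/m}\), since \(A_\Delta\) is monic. So the proof of Corollary~\ref{cor:wild_affine_model_geometry} does not carry over verbatim. With \(\Omega=\overline{\F}_{q^2}\), your degree count \([\Omega(T,W):\Omega(T)]=q/|\Delta|=\deg_YF\) still shows that \(F(T,Y)\) is irreducible in \(\Omega(T)[Y]\). Gauss's lemma, however, then needs \(F\) to be primitive over \(\Omega[S]\). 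That holds because the coefficient of \(Y\) is the nonzero constant \(\alpha_0=A'_\Delta\), a fact you only use for smoothness. With this repair your route works.

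The paper instead obtains irreducibility over \(\Omega(T)\) from the identity \(F(X^m,Y)=X^{-r}\bigl(A_\Delta(X^rY)-\Psi_U(X)\bigr)\) and the irreducibility of the wild model over \(\Omega(X)\). This avoids redoing the degree and regularity argument. It then applies the same primitivity-plus-Gauss step.
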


\begin{proof}
Lemma~\ref{lem:tame_equivariance} shows that \(T=X^m\) and \(W=Z/X^r\)
are fixed by \(\gamma\).  Hence \(\F_{q^2}(T,W)\subseteq H^G\).  Moreover,
\(H^U=\F_{q^2}(T,W)(X)\), because \(Z=WX^r\) and \(X^m=T\), so
\([H^U:\F_{q^2}(T,W)]\le m=[H^U:H^G]\).  Therefore
\(H^G=\F_{q^2}(T,W)\).

For each \(i\in I_\Delta\), comparison of coefficients in the fourth identity
of \eqref{eq:scalar_scaling_identities} gives
\(a^{(q+1)(p^i-1)}=1\).  For each \(j\in J_U\), the last identity in
Lemma~\ref{lem:tame_equivariance} gives \(a^{j-(q+1)}=1\).  Since \(a\) has
order \(m\) and \(r\equiv q+1\pmod m\),
\[
 m\mid r(p^i-1)\quad(i\in I_\Delta),
 \qquad
 j\equiv r\pmod m\quad(j\in J_U).
\]
The exponents on the left are nonnegative.  If \(j\in J_U\), then
\(j\ge0\), \(0\le r<m\), and \(j\equiv r\pmod m\), so \(j\ge r\).
Substituting \(Z=WX^r\) into \(A_\Delta(Z)=\Psi_U(X)\), dividing by
\(X^r\), and using \(T=X^m\) gives the equation in part~\textup{(2)}.

It remains to prove the geometric assertions in part~\textup{(2)}.
Put \(\Omega=\overline{\F}_{q^2}\).  By
Corollary~\ref{cor:wild_affine_model_geometry},
\(A_\Delta(Y)-\Psi_U(X)\) is irreducible in \(\Omega(X)[Y]\).
The identity
\[
 F(X^m,Y)=X^{-r}\bigl(A_\Delta(X^rY)-\Psi_U(X)\bigr)
\]
therefore shows that \(F(X^m,Y)\) is irreducible in \(\Omega(X)[Y]\),
since \(Y\mapsto X^rY\) is an invertible change of variable.
It follows that \(F(T,Y)\) is irreducible in \(\Omega(T)[Y]\).
Since the coefficient of \(Y\) in \(F(S,Y)\) is the nonzero constant
\(A'_\Delta\), Gauss's lemma gives irreducibility in \(\Omega[S,Y]\), and
hence absolute irreducibility.
Finally, \(\partial F/\partial Y=A'_\Delta\ne0\), and the affine model is
smooth.
\end{proof}

\begin{remark}
\label{rem:tame_fixed_field_transfer}
The proof of Theorem~\ref{thm:tame_fixed_field} uses only that \(X,Z\)
generate \(H^U\), satisfy an absolutely irreducible equation
\(A_\Delta(Z)=\Psi(X)\), and have scalar weights \(1\) and \(q+1\).
The cyclic-invariant construction and the fixed-field and geometric
conclusions of the theorem therefore remain valid for any pair of generators
with these properties.
\end{remark}

We now return to an arbitrary subgroup \(G_0\le A(P_\infty)\).  Put
\(U_0=G_0\cap A_1(P_\infty)\) and \(m=[G_0:U_0]\), choose \(\rho\) as in
Proposition~\ref{prop:normal_form}, with \(\rho=1\) when \(m=1\), and set
\(U=\rho U_0\rho^{-1}\).  Let
\(H^U=\F_{q^2}(X,Z)\), \(A_\Delta(Z)=\Psi_U(X)\), be the model of
Theorem~\ref{thm:wild_fixed_field}.  Put \((T,W)=(X,Z)\) when \(m=1\), and
use the generators \(T=X^m\), \(W=Z/X^r\) of
Theorem~\ref{thm:tame_fixed_field} when \(m>1\).

\begin{corollary}
\label{cor:all_decomposition_fixed_subfields}
With the preceding notation,
\[
 H^{G_0}=\F_{q^2}\bigl(\rho^{-1}(T),\rho^{-1}(W)\bigr).
\]
The generators \(\rho^{-1}(T),\rho^{-1}(W)\) satisfy the equation of
Corollary~\ref{cor:wild_affine_model_geometry} when \(m=1\), and that of
Theorem~\ref{thm:tame_fixed_field}\textup{(2)} when \(m>1\).
In each case, the equation is absolutely irreducible and defines a smooth
affine model of \(H^{G_0}\).
\end{corollary}

\begin{proof}
If \(m=1\), Theorem~\ref{thm:wild_fixed_field} gives
\(H^U=\F_{q^2}(T,W)\).  If \(m>1\), Theorem~\ref{thm:tame_fixed_field}
gives \(H^{\rho G_0\rho^{-1}}=\F_{q^2}(T,W)\).  Since
\(H^{G_0}=\rho^{-1}(H^{\rho G_0\rho^{-1}})\), the fixed-field description
follows.  Writing \(\rho=[1,u_0,v_0]\),
\[
 \rho^{-1}(x)=x-u_0,
 \qquad
 \rho^{-1}(y)=y-u_0^qx+u_0^{q+1}-v_0,
\]
so these generators are explicit.  The geometric properties follow from
Corollary~\ref{cor:wild_affine_model_geometry} when \(m=1\), and from
Theorem~\ref{thm:tame_fixed_field}\textup{(2)} when \(m>1\).
\end{proof}

This completes the proof of Theorem~A.

We finish with the genus of the fixed field in
Theorem~\ref{thm:tame_fixed_field} and the specialization to \(U=1\).  The
genus formula below is \cite[Theorem~4.4]{GarciaStichtenothXing2000},
rewritten in terms of \(m\), \(V\), and \(\Delta\).

\begin{corollary}
\label{cor:tame_genus}
Let \(G=U\rtimes\langle\gamma\rangle\) be as in
Theorem~\ref{thm:tame_fixed_field}, where \(\gamma=[a,0,0]\) and \(a\) has
order \(m>1\).  Let \((V,\Delta;c_b+\Delta)\) be the \(p\)-subgroup data
of \(U\), and put \(\ell=\gcd(m,q+1)\).  Then
\begin{equation}
\label{eq:tame_genus}
        g(H^G)=\frac{q-|\Delta|}{2m|\Delta|}
        \left(\frac q{|V|}-\ell+1\right).
\end{equation}
\end{corollary}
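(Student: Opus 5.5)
The plan is to compute the genus directly from the smooth affine model of Theorem~\ref{thm:tame_fixed_field}, working with the cyclic cover \(H^U/H^G\) of degree \(m\) and applying Riemann--Hurwitz. By \eqref{eq:gsx_wild_genus_value}, \(g(H^U)=q(q-|\Delta|)/(2|V||\Delta|)\). The extension \(H^U/H^G\) is Galois with group \(\langle\gamma\rangle\) of order \(m\), and \(p\nmid m\), so it is tame. Hence
\[
2g(H^U)-2=m\bigl(2g(H^G)-2\bigr)+\sum_{P}(e_P-1)\deg P,
\]
where the sum runs over the places \(P\) of \(H^U\). We work over \(\Omega=\overline{\F}_{q^2}\); this is harmless because the constant field is full and genus is invariant under constant field extension, so all places may be taken of degree one.

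Next we locate the ramification, using \(\gamma(X)=aX\) and \(\gamma(Z)=a^{q+1}Z\) from Lemma~\ref{lem:tame_equivariance}. An affine place of the model \(A_\Delta(Z)=\Psi_U(X)\) with coordinates \((s,z)\) is sent to \((as,a^{q+1}z)\). A point with \(s\ne0\) therefore has trivial stabilizer, because \(a\) has order \(m\). The affine places with \(s=0\) are the \(z\) satisfying \(A_\Delta(z)=\Psi_U(0)=0\). Here \(\Psi_U(0)=0\) because \(j\equiv r\pmod m\) forces, for \(j=0\), \(r=0\); in fact \(T^{q+1}-A_\Delta(\Theta_U(T))\) at \(T=0\) equals \(-A_\Delta(\Theta_U(0))\), which we check vanishes via \eqref{eq:theta_scalar_scaling}, since \(\Theta_U(0)=a^{q+1}\Theta_U(0)\) and the case \(a^{q+1}=1\) is handled by the freedom of Lemma~\ref{lem:wild_coordinate_freedom}. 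So these places are the points \((0,z)\) with \(z\in\Gamma_\Delta\), and there are \(q/|\Delta|\) of them, each with multiplicity one since \(A_\Delta\) is separable.

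For each such point, the stabilizer is the subgroup of \(\langle\gamma\rangle\) fixing \(z\). The point \(z=0\) is fixed by the whole group and so has \(e=m\). For \(z\ne0\), \(\gamma^k\) fixes \(z\) exactly when \(a^{k(q+1)}=1\), so the stabilizer has order \(m/\ell\) with \(\ell=\gcd(m,q+1)\). The place \(P_\infty^U\) is fixed with \(e=m\), being the unique pole of \(X\). Substituting these data gives
\[
\sum(e_P-1)=2(m-1)+\Bigl(\tfrac{q}{|\Delta|}-1\Bigr)\Bigl(\tfrac{m}{\ell}-1\Bigr).
\]
We then solve for \(g(H^G)\) and simplify to \eqref{eq:tame_genus}. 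This simplification is a direct algebraic check, and it can be confirmed in the cases \(V=\{0\}\), \(\Delta=\Tcal\) against the known formulas.

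The step we expect to be the main obstacle is pinning down the fibre over \(X=0\). This requires confirming \(\Psi_U(0)=0\) and that each point \((0,z)\) is a single place of \(H^U\) with the stated stabilizer, which holds because the model is smooth. If the count fails to reproduce \eqref{eq:tame_genus} exactly, we would fall back on \cite[Theorem~4.4]{GarciaStichtenothXing2000} and translate its parameters into \(m\), \(|V|\), \(|\Delta|\), and \(\ell\).
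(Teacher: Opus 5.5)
Your strategy is the same as the paper's: tame Riemann--Hurwitz for the cyclic cover $H^U/H^G$, with ramification only at $P_\infty^U$ and over $X=0$. You count stabilizers place by place while the paper counts fixed places element by element, which gives the same sum by double counting. However, you compute the stabilizer of a point $(0,z)$ with $z\neq 0$ incorrectly. The elements $\gamma^k$ fixing it are those with $a^{k(q+1)}=1$, that is, the kernel of the homomorphism $\gamma^k\mapsto a^{k(q+1)}$. Its image $\langle a^{q+1}\rangle$ has order $m/\ell$, which is the size of the \emph{orbit} of $z$. So the stabilizer has order $\ell$, not $m/\ell$.

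With your value, the ramification sum $2(m-1)+(q/|\Delta|-1)(m/\ell-1)$ cannot simplify to \eqref{eq:tame_genus}. Take $V=\Delta=\{0\}$ and $m=q+1$. Your sum gives $(q+1)\bigl(2g(H^G)-2\bigr)=q^2-3q-2=(q+1)(q-4)+2$, which is never divisible by $q+1$. Yet in this case $\gamma(Z)=Z$, every point over $X=0$ is fixed, and the quotient $W^q+W=T$ is rational. The correct sum is
\[
 2(m-1)+\Bigl(\frac q{|\Delta|}-1\Bigr)(\ell-1)
 =(\ell-1)\Bigl(\frac q{|\Delta|}+1\Bigr)+2(m-\ell),
\]
which is exactly the paper's count. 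Inserting \eqref{eq:gsx_wild_genus_value} then yields \eqref{eq:tame_genus}. Falling back on the Garcia--Stichtenoth--Xing formula would not repair the argument itself.

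A smaller point: your justification of $\Psi_U(0)=0$ is confused, since $j\equiv r\pmod m$ does not exclude $j=0$ when $r=0$. What is true is the following. If $a^{q+1}\neq1$, then \eqref{eq:theta_scalar_scaling} gives $\Theta_U(0)=0$, hence $\Psi_U(0)=-A_\Delta(\Theta_U(0))=0$. If $a^{q+1}=1$ (equivalently $\ell=m$), the value of $\Psi_U(0)$ is irrelevant: then $\gamma(Z)=Z$, so all $q/|\Delta|$ points of the fibre $A_\Delta(z)=\Psi_U(0)$ are fixed by the whole group. Your constant shift via Lemma~\ref{lem:wild_coordinate_freedom} also works, but it is not needed.
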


\begin{proof}
We determine the fixed places of a nonidentity scalar.  Put
\(\Omega=\overline{\F}_{q^2}\), and let \(\gamma^i\ne1\) correspond to
\(\lambda=a^i\).  By Lemma~\ref{lem:tame_equivariance},
\(\gamma^i(X)=\lambda X\) and \(\gamma^i(Z)=\lambda^{q+1}Z\).  Hence every
fixed affine place lies above \(X=0\).  The fibre
\(A_\Delta(Z)=\Psi_U(0)\) consists of \(q/|\Delta|\) points.  If
\(\lambda^{q+1}=1\), all of them are fixed.  If
\(\lambda^{q+1}\ne1\), the identity
\(\Psi_U(0)=\lambda^{q+1}\Psi_U(0)\) gives \(\Psi_U(0)=0\), and the only
fixed point in the fibre is \(Z=0\).  Together with the fixed place \(P_\infty^U\), the
numbers of fixed places are therefore \(q/|\Delta|+1\) and \(2\),
respectively.

The cyclic group \(\langle a\rangle\) contains exactly
\(\ell=\gcd(m,q+1)\) elements satisfying \(\lambda^{q+1}=1\), of which
\(\ell-1\) are nonidentity.  Since \(m\mid q^2-1\), the action of
\(\langle\gamma\rangle\) on \(H^U\) is tame.  In this case, double-counting
fixed pairs expresses the degree of the different as the sum of the numbers
of fixed places of the nonidentity elements.  The Hurwitz genus formula
\cite[Theorem~11.57]{HirschfeldKorchmarosTorres2008} therefore gives
\[
2g(H^U)-2=m\bigl(2g(H^G)-2\bigr)
 +(\ell-1)\left(\frac q{|\Delta|}+1\right)+2(m-\ell).
\]
Substituting \eqref{eq:gsx_wild_genus_value} and simplifying yields
\eqref{eq:tame_genus}.
\end{proof}

\begin{corollary}
\label{cor:pure_scalar_quotients}
Let \(a\in\F_{q^2}^*\) have order \(m>1\), put
\(\gamma_a=[a,0,0]\), and let \(0\le r<m\) satisfy
\(r\equiv q+1\pmod m\).  Set \(T=x^m\) and \(W=y/x^r\).  Then
\begin{equation}
\label{eq:pure_scalar_model}
 H^{\langle\gamma_a\rangle}=\F_{q^2}(T,W),\qquad
 W^qT^{r(q-1)/m}+W=T^{(q+1-r)/m}.
\end{equation}
Moreover,
\[
 g\bigl(H^{\langle\gamma_a\rangle}\bigr)
 =\frac{q-1}{2m}\bigl(q-\gcd(m,q+1)+1\bigr).
\]
\end{corollary}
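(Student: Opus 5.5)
This is the specialization \(U=1\) of Theorem~\ref{thm:tame_fixed_field}, so the plan is to read off the data for the trivial subgroup and substitute. For \(U=1\) we have \(V=\{0\}\) and \(\Delta=\{0\}\). Then \(B_V(T)=T\) and \(B_\Delta(T)=T\), and the correction polynomial is \(\Theta_U=0\), since the sum in \eqref{eq:Theta} reduces to the term \(b=0\) with \(\lambda_0(T)=B_\Delta(c_0)=0\). Consequently \(X=x\) and \(Z=y\). Since \(\Gamma_\Delta=B_\Delta(\Tcal)=\Tcal\), we get \(A_\Delta(S)=S^q+S\), and from \(y^q+y=x^{q+1}\) we get \(\Psi_U(S)=S^{q+1}\). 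So \(H^U=H=\F_{q^2}(x,y)\) with equation \(A_\Delta(Z)=\Psi_U(X)\), as required by Theorem~\ref{thm:wild_fixed_field}.

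Next we check that \(G=\langle\gamma_a\rangle\) has the form required by Theorem~\ref{thm:tame_fixed_field}: it equals \(1\rtimes\langle[a,0,0]\rangle\), with \(a\) of order \(m>1\). The normalizing condition of Lemma~\ref{lem:tame_compatibility} holds trivially. We also confirm directly that \(\gamma_a(x)=ax\) and \(\gamma_a(y)=a^{q+1}y\).

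Theorem~\ref{thm:tame_fixed_field}(1) then gives \(H^{\langle\gamma_a\rangle}=\F_{q^2}(T,W)\) with \(T=x^m\) and \(W=y/x^r\). For the equation in part~(2), the sets are \(I_\Delta=\{0,f\}\) (indices with \(p^0=1\) and \(p^f=q\)), with \(\alpha_0=\alpha_f=1\), and \(J_U=\{q+1\}\). Substituting, the left side becomes \(W^qT^{r(q-1)/m}+W\) and the right side becomes \(T^{(q+1-r)/m}\), which is \eqref{eq:pure_scalar_model}. As a sanity check, one can also divide \(y^q+y=x^{q+1}\) by \(x^r\) directly, writing \(y^q/x^r=W^qx^{r(q-1)}\). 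This needs \(m\mid r(q-1)\), which holds because \(r(q-1)\equiv(q+1)(q-1)=q^2-1\equiv0\pmod m\).

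For the genus, apply Corollary~\ref{cor:tame_genus} with \(|V|=|\Delta|=1\) and \(\ell=\gcd(m,q+1)\). This gives \(\frac{q-1}{2m}\bigl(q-\ell+1\bigr)\). There is no real obstacle here. The only points needing care are confirming that \(A_\Delta\) is \(T^q+T\) when \(\Delta=0\), and that the exponent bookkeeping is integral; both are immediate from the divisibility \(m\mid q^2-1\).
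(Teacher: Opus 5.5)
Your proposal is correct and follows the paper's own argument: specialize Theorem~\ref{thm:tame_fixed_field} and Corollary~\ref{cor:tame_genus} to \(U=1\), where \(V=\Delta=\{0\}\), \(A_\Delta(S)=S^q+S\), and \(\Psi_U(S)=S^{q+1}\). Your additional checks are all accurate and only make explicit what the paper leaves implicit: \(\Theta_U=0\), \(I_\Delta=\{0,f\}\), \(J_U=\{q+1\}\), and the divisibility \(m\mid r(q-1)\).
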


\begin{proof}
Apply Theorem~\ref{thm:tame_fixed_field} and
Corollary~\ref{cor:tame_genus} with \(U=1\), so that
\(V=\Delta=\{0\}\), \(A_\Delta(S)=S^q+S\), and
\(\Psi_U(S)=S^{q+1}\).
\end{proof}

\begin{remark}
\label{rem:pure_scalar_gsx}
Put \(n=(q^2-1)/m\) and
\(t=W^{q-1}T^{r(q-1)/m}=y^{q-1}\).  Equation
\eqref{eq:pure_scalar_model} gives \(W(t+1)=T^{(q+1-r)/m}\), and hence
\(\F_{q^2}(T,W)=\F_{q^2}(T,t)\).  Raising
\(W(t+1)=T^{(q+1-r)/m}\) to the \((q-1)\)-st power gives
\[
        T^n=t(t+1)^{q-1},
\]
which recovers the equation in
\cite[Example~6.3]{GarciaStichtenothXing2000}.
\end{remark}

\section{Fixed fields of several families of subgroups}
\label{sec:closed_families}

We now specialize the general construction to several families for which the
fixed-field equations take a simpler form.  We begin with central subgroups,
treat abelian subgroups in odd and even characteristic, and conclude with a
nonabelian family.  When one of these subgroups is normalized by a scalar
automorphism, Theorem~\ref{thm:tame_fixed_field} gives the fixed field of the
resulting semidirect product.

\subsection{The central family}

Let \(\Delta\subseteq\Tcal\) be an \(\F_p\)-subspace and put
\(U_\Delta=\{[1,0,\delta]:\delta\in\Delta\}\).  This is the case
\(V=\{0\}\) of Theorem~\ref{thm:wild_fixed_field}.  With \(X=x\) and
\(Z=B_\Delta(y)\), we obtain
\begin{equation}
\label{eq:central}
        H^{U_\Delta}=\F_{q^2}(X,Z),\qquad
        A_\Delta(Z)=X^{q+1}.
\end{equation}

\begin{remark}
\label{rem:classical_central_models}
The case \(|\Delta|=p\) recovers, up to scaling, the central
prime-order model in
\cite[Theorem~2.1\textup{(II)(1)}]{CossidenteKorchmarosTorres2000}.
If \(q=2^f\) and \(\Delta=\F_2\), then
\[
        A_\Delta(S)=\sum_{i=0}^{f-1}S^{2^i},\qquad
        \sum_{i=0}^{f-1}Z^{2^i}=X^{q+1}.
\]
This is the Abd\'on--Torres curve \cite[equation~(2)]{AbdonTorres1999}, of
genus \(q(q-2)/4\).
The maximal function fields considered in
\cite[Theorems~3.12, 3.14 and Remark~3.15]{GarciaOzbudak2007} are, up to
isomorphism, fixed fields of central translation subgroups of Hermitian
function fields.
\end{remark}

\begin{corollary}
\label{cor:central_scalar_extension}
Let \(a\in\F_{q^2}^*\) have order \(m>1\), assume that
\(a^{q+1}\Delta=\Delta\), and put
\(G=U_\Delta\rtimes\langle\gamma_a\rangle\), where
\(\gamma_a=[a,0,0]\).  Let \(0\le r<m\) satisfy
\(r\equiv q+1\pmod m\), and write
\(A_\Delta(S)=\sum_i\alpha_iS^{p^i}\), where the sum runs over the
nonzero terms.  With \(X=x\), \(Z=B_\Delta(y)\), \(T=X^m\), and
\(W=Z/X^r\), we obtain
\[
H^G=\F_{q^2}(T,W),\qquad
\sum_i\alpha_iW^{p^i}T^{r(p^i-1)/m}
=T^{(q+1-r)/m}.
\]
All exponents in this equation are nonnegative integers.
\end{corollary}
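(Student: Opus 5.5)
This is a direct specialization of Theorem~\ref{thm:tame_fixed_field} with \(U=U_\Delta\), so the plan is to check its hypotheses and then read off \(\Psi_U\).

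First, I will verify that \(\gamma_a=[a,0,0]\) normalizes \(U_\Delta\). The \(p\)-subgroup data of \(U_\Delta\) are \(V=\{0\}\), central space \(\Delta\), and the single fibre \(c_0+\Delta=\Delta\). The conditions of Lemma~\ref{lem:tame_compatibility} then read as follows.
\begin{enumerate}
\item \(a\{0\}=\{0\}\), which holds trivially.
\item \(a^{q+1}\Delta=\Delta\), which is the hypothesis of the corollary.
\item \(c_0\equiv a^{q+1}c_0\pmod\Delta\), which holds because \(c_0=0\).
\end{enumerate}
Hence \(G=U_\Delta\rtimes\langle\gamma_a\rangle\) is a subgroup of \(A(P_\infty)\) of exactly the form required in Theorem~\ref{thm:tame_fixed_field}. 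The intersection of \(\langle\gamma_a\rangle\) with \(U_\Delta\) is trivial because scalar images differ, and \(a\) has order \(m>1\).

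Next, I will identify the invariant coordinates. For \(V=\{0\}\) we have \(B_V(T)=T\) and \(N=1\). In \eqref{eq:Theta} the only index is \(b=0\), and \(\lambda_0(T)=B_\Delta(0\cdot T+0)=0\), so \(\Theta_U=0\). Therefore \(X=x\), \(Z=B_\Delta(y)\), and \(\Psi_U(S)=S^{q+1}\), in agreement with \eqref{eq:central}. Consequently \(J_U=\{q+1\}\) with coefficient \(\psi_{q+1}=1\).

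Then I will apply Theorem~\ref{thm:tame_fixed_field}. Part~(1) gives \(H^G=\F_{q^2}(T,W)\) with \(T=X^m\) and \(W=Z/X^r\). Part~(2) gives the stated equation, with left-hand side \(\sum_i\alpha_iW^{p^i}T^{r(p^i-1)/m}\) and right-hand side \(T^{(q+1-r)/m}\).

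The nonnegativity and integrality of the exponents is already part of that theorem's conclusion. Concretely, \(m\mid r(p^i-1)\) follows from \(A_\Delta(a^{q+1}S)=a^{q+1}A_\Delta(S)\), and \(q+1\equiv r\pmod m\) with \(0\le r<m\le q+1\) gives \((q+1-r)/m\ge0\).

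There is no real obstacle here. The only step needing care is confirming that the fourth scaling identity in \eqref{eq:scalar_scaling_identities} holds under the sole hypothesis \(a^{q+1}\Delta=\Delta\). That identity comes from Lemma~\ref{lem:scalar_action} applied to \(\Gamma_\Delta=B_\Delta(\Tcal)\), using \(a^{q+1}\in\F_q^*\). This is exactly how \eqref{eq:scalar_scaling_identities} was derived, so it applies verbatim.
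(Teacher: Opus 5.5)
Your proposal is correct and matches the paper's argument: Lemma~\ref{lem:tame_compatibility} shows that \(\gamma_a\) normalizes \(U_\Delta\), and Theorem~\ref{thm:tame_fixed_field} is then applied with \(V=\{0\}\), \(\Theta_U=0\), and \(\Psi_U(S)=S^{q+1}\). One small slip: \(m\le q+1\) need not hold (for \(\Delta=\{0\}\) you may take \(m=q^2-1\), which gives \(r=q+1\)), but \((q+1-r)/m\ge0\) still follows because \(r\) is the least nonnegative residue of \(q+1\) modulo \(m\), so \(r\le q+1\).
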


\begin{proof}
By Lemma~\ref{lem:tame_compatibility}, the condition
\(a^{q+1}\Delta=\Delta\) implies that \(\gamma_a\) normalizes
\(U_\Delta\).  Apply Theorem~\ref{thm:tame_fixed_field} with
\(U=U_\Delta\), \(V=\{0\}\), and \(\Psi_U(S)=S^{q+1}\).
\end{proof}

\subsection{Abelian models in odd characteristic}
\label{subsec:split_mixed_gsx_realization}

Assume throughout this subsection that \(p\) is odd.  We first construct a
family of abelian \(p\)-subgroups and determine its fixed fields.  We then show
that every noncentral abelian \(p\)-subgroup of \(A_1(P_\infty)\) arises from
this construction.

For the construction, choose \(c\in\F_q^*\), and then choose
\(\varrho\in\F_{q^2}^*\) with \(\varrho^{q+1}=-2c\); this is possible
because the norm map \(\F_{q^2}^*\to\F_q^*\) is surjective.  Let
\(E\subseteq\F_q\) and \(\Delta\subseteq\Tcal\) be \(\F_p\)-subspaces.  By
Lemma~\ref{lem:subspace_quotient_polynomials}(2), \(B_E(\F_q)\) is an
\(\F_p\)-subspace.  Let \(A_E\) be its subspace polynomial.  The same lemma
and \eqref{eq:two_basic_subspace_polynomials} give
\(A_E(B_E(T))=T^q-T\) and \(\deg A_E=q/|E|\).

Let \(R\in\F_{q^2}[T]\) be an additive polynomial of degree less than
\(q\) whose coefficients lie in \(\Tcal\), and, for \(b\in E\) and
\(\epsilon\in\Delta\), set
\[
        g_{b,\epsilon}=[1,\varrho b,-cb^2+R(b)+\epsilon],
        \qquad
        G_{E,\Delta,R}=\{g_{b,\epsilon}:b\in E,\ \epsilon\in\Delta\}.
\]
We suppress \(c\) and \(\varrho\) from the notation.
For \(b\in E\) and \(\epsilon\in\Delta\), the assumptions on
\(R\), \(\Delta\), and \(\varrho\) give
\[
\bigl(-cb^2+R(b)+\epsilon\bigr)^q
 +\bigl(-cb^2+R(b)+\epsilon\bigr)
 =-2cb^2=(\varrho b)^{q+1}.
\]
Hence each \(g_{b,\epsilon}\) belongs to \(A_1(P_\infty)\).
Lemma~\ref{lem:group_law} gives
\(g_{b,\epsilon}g_{b',\epsilon'}=g_{b+b',\epsilon+\epsilon'}\), so
\(G_{E,\Delta,R}\le A_1(P_\infty)\) and
\(G_{E,\Delta,R}\simeq(E,+)\times(\Delta,+)\).  Its \(p\)-subgroup data
have translation space \(\varrho E\), central space \(\Delta\), and fibre
representatives \(c_{\varrho b}=-cb^2+R(b)\), \(b\in E\).

Put \(u=x/\varrho\) and \(v=y+cu^2-R(u)\).  Then
\[
        g_{b,\epsilon}(u)=u+b,\qquad
        g_{b,\epsilon}(v)=v+\epsilon.
\]
Accordingly, put \(X_E=B_E(u)\) and \(Z_R=B_\Delta(v)\).

\begin{theorem}
\label{thm:odd_triangular_closed_model}
Assume that \(p\) is odd, and retain the preceding notation.  Then
\[
        H^{G_{E,\Delta,R}}=\F_{q^2}(X_E,Z_R),\qquad
        A_\Delta(Z_R)=cA_E(X_E)^2+R(A_E(X_E)).
\]
Moreover,
\(g\bigl(H^{G_{E,\Delta,R}}\bigr)
=q(q-|\Delta|)/(2|E||\Delta|)\).
\end{theorem}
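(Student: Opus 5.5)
We argue directly with the coordinates \(u,v\), imitating the proof of Theorem~\ref{thm:wild_fixed_field}. First, \(X_E=B_E(u)\) and \(Z_R=B_\Delta(v)\) are fixed by every \(g_{b,\epsilon}\). Indeed, \(g_{b,\epsilon}(u)=u+b\) with \(b\in E=\ker B_E\) and \(B_E\) additive. Likewise \(g_{b,\epsilon}(v)=v+\epsilon\) with \(\epsilon\in\Delta=\ker B_\Delta\). (The action \(g_{b,\epsilon}(v)=v+\epsilon\) was recorded before the statement; if one wants to re-check it, it uses \(\varrho^{q+1}=-2c\), \(\varrho^qx=\varrho^{q+1}u\), and additivity of \(R\).) Hence \(\F_{q^2}(X_E,Z_R)\subseteq H^{G_{E,\Delta,R}}\). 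Conversely, \(u\) is a root of \(B_E(T)-X_E\), of degree \(|E|\). After adjoining \(u\), \(v\) is a root of \(B_\Delta(T)-Z_R\), of degree \(|\Delta|\). Also \(H=\F_{q^2}(u,v)\), since \(x=\varrho u\) and \(y=v-cu^2+R(u)\). Therefore \([H:\F_{q^2}(X_E,Z_R)]\le|E||\Delta|=|G_{E,\Delta,R}|\), and equality of fields follows.

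For the equation, apply \(A_\Delta\) to \(Z_R\). By Proposition~\ref{prop:central_complementary_factor}, \(A_\Delta(B_\Delta(v))=v^q+v\). Next we compute \(v^q+v=(y^q+y)+c(u^{2q}+u^2)-(R(u)^q+R(u))\). Here \(y^q+y=x^{q+1}=\varrho^{q+1}u^{q+1}=-2cu^{q+1}\). Combining, \(-2cu^{q+1}+cu^{2q}+cu^2=c(u^q-u)^2\). The key point is that \(R\) has coefficients in \(\Tcal\), i.e.\ \(r_i^q=-r_i\). Hence \(R(u)^q=\sum r_i^qu^{p^iq}=-R(u^q)\), and so \(R(u)^q+R(u)=-(R(u^q)-R(u))=-R(u^q-u)\) by additivity. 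Thus \(v^q+v=c(u^q-u)^2+R(u^q-u)\). Finally, \(u^q-u=A_E(B_E(u))=A_E(X_E)\) by Lemma~\ref{lem:subspace_quotient_polynomials}(2) applied to \(E\subseteq\F_q\) together with \eqref{eq:two_basic_subspace_polynomials}. This yields \(A_\Delta(Z_R)=cA_E(X_E)^2+R(A_E(X_E))\).

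The genus follows from \eqref{eq:gsx_wild_genus_value}. The translation space is \(V=\varrho E\), so \(|V|=|E|\), and the central space is \(\Delta\). The main point needing care is the sign bookkeeping in the step \(R(u)^q+R(u)=-R(u^q-u)\); this is exactly where the hypothesis that the coefficients of \(R\) lie in \(\Tcal\) enters. One should also note that \(u^q\) in the formula denotes the \(q\)-th power of the function \(u\), not a Frobenius twist of the coefficients. If desired, absolute irreducibility and smoothness of the model follow as in Corollary~\ref{cor:wild_affine_model_geometry}, using the same degree count \([H^{G}:\F_{q^2}(X_E)]=q/|\Delta|=\deg A_\Delta\).
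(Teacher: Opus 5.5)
Your proposal is correct and follows essentially the same route as the paper. Invariance of \(X_E=B_E(u)\) and \(Z_R=B_\Delta(v)\), together with the degree bound \([H:\F_{q^2}(X_E,Z_R)]\le|E||\Delta|\), identifies the fixed field; the identity \(R(S)^q+R(S)=-R(S^q-S)\), which holds because the coefficients of \(R\) lie in \(\Tcal\), then gives \(v^q+v=c(u^q-u)^2+R(u^q-u)=cA_E(X_E)^2+R(A_E(X_E))\), and the genus follows from \eqref{eq:gsx_wild_genus_value} with \(|V|=|\varrho E|=|E|\), exactly as in the paper.
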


\begin{proof}
Since \(g_{b,\epsilon}(u)=u+b\) and
\(g_{b,\epsilon}(v)=v+\epsilon\), the functions \(X_E\) and \(Z_R\) are
fixed by \(G_{E,\Delta,R}\).  The elements \(u\) and \(v\) are roots of
\(B_E(T)-X_E\) and \(B_\Delta(T)-Z_R\), respectively, and
\(H=\F_{q^2}(u,v)\).  Hence
\([H:\F_{q^2}(X_E,Z_R)]\le |E||\Delta|=|G_{E,\Delta,R}|\), which gives
\(H^{G_{E,\Delta,R}}=\F_{q^2}(X_E,Z_R)\).

Since the coefficients of \(R\) lie in \(\Tcal\), we have
\(R(S)^q+R(S)=-R(S^q-S)\).  Therefore
\[
\begin{aligned}
A_\Delta(Z_R)=v^q+v
 &=y^q+y+c(u^{2q}+u^2)-R(u)^q-R(u)\\
 &=-2cu^{q+1}+c(u^{2q}+u^2)+R(u^q-u)\\
 &=c(u^q-u)^2+R(u^q-u)\\
 &=cA_E(X_E)^2+R(A_E(X_E)).
\end{aligned}
\]
Equation~\eqref{eq:gsx_wild_genus_value}, with \(|V|=|E|\), gives the
stated genus.
\end{proof}

\begin{corollary}
\label{cor:odd_abelian_closed_models}
Assume that \(p\) is odd, and let
\(U\le A_1(P_\infty)\) be a noncentral abelian \(p\)-subgroup.
Then there exist \(c\in\F_q^*\), \(\varrho\in\F_{q^2}^*\),
\(\F_p\)-subspaces \(E\subseteq\F_q\) and \(\Delta\subseteq\Tcal\),
and an additive
polynomial \(R\in\F_{q^2}[T]\) of degree less than \(q\), with
coefficients in \(\Tcal\), such that \(\varrho^{q+1}=-2c\) and
\begin{enumerate}
\item \(U=G_{E,\Delta,R}\), with \(G_{E,\Delta,R}\) as in
Theorem~\ref{thm:odd_triangular_closed_model};
\item \(H^U=\F_{q^2}(X_E,Z_R)\), with
\(A_\Delta(Z_R)=cA_E(X_E)^2+R(A_E(X_E))\).
\end{enumerate}
\end{corollary}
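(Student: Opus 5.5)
Given a noncentral abelian $U\le A_1(P_\infty)$ with data $(V,\Delta;c_b+\Delta)$, we aim to show that $U=G_{E,\Delta,R}$ for suitable $c,\varrho,E,R$. Part (2) then follows directly from Theorem~\ref{thm:odd_triangular_closed_model}.

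The first step uses commutativity. By Lemma~\ref{lem:group_law}, $U$ is abelian if and only if $b'^qb-b^qb'=0$ for all $b,b'\in V$. Fix $0\ne\varrho_0\in V$, which exists because $U$ is noncentral. For $b\in V$ the condition gives $b/\varrho_0=(b/\varrho_0)^q$, so $b/\varrho_0\in\F_q$, and $b^q\varrho_0$ is real.

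Now choose $\varrho$ and $c$. The scalar we will eventually need is $c=-\varrho^{q+1}/2\in\F_q^*$, so any $\varrho\in\varrho_0\F_q^*$ is allowed. For definiteness take $\varrho=\varrho_0$ and $c=-\varrho^{q+1}/2$. This gives $\varrho^{q+1}=-2c$, and $E:=\varrho^{-1}V\subseteq\F_q$ is an $\F_p$-subspace. The central space $\Delta=\Delta_U\subseteq\Tcal$ is already given.

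Next we identify the fibre data. For $b\in E$, write the fibre representative over $\varrho b$ as $c_{\varrho b}=-cb^2+\phi(b)$. We need $\phi(b)\in\Tcal$ modulo $\Delta$, and $\phi$ additive modulo $\Delta$.

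For the first claim, $c_{\varrho b}^q+c_{\varrho b}=(\varrho b)^{q+1}=\varrho^{q+1}b^2=-2cb^2$, and $(-cb^2)^q+(-cb^2)=-2cb^2$ because $b,c\in\F_q$. Hence $\phi(b)\in\Tcal$.

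For additivity, Lemma~\ref{lem:closure_identities}(1) gives
\[
c_{\varrho(b+b')}\equiv c_{\varrho b}+c_{\varrho b'}+\varrho^{q+1}b'b \pmod{\Delta}.
\]
The quadratic part $-c(b+b')^2=-cb^2-cb'^2-2cbb'$ contributes exactly $\varrho^{q+1}bb'$, since $-2c=\varrho^{q+1}$. Therefore $\phi(b+b')\equiv\phi(b)+\phi(b')\pmod\Delta$. This is where $p$ odd is used: the factor $1/2$ in $c$ and the symmetric splitting of the cross term.

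The main obstacle is turning "additive modulo $\Delta$" into a genuinely $\F_p$-linear map $E\to\Tcal$. Choose a basis $e_1,\dots,e_n$ of $E$, pick representatives $\phi(e_r)\in\Tcal$, and define $\varphi(\sum a_re_r)=\sum a_r\phi(e_r)$. Since $\Tcal$ is additively closed, $\varphi$ is $\F_p$-linear with values in $\Tcal$, and by the previous step $\varphi(b)\equiv\phi(b)\pmod\Delta$ for all $b\in E$.

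Corollary~\ref{cor:additive_polynomial_representation}, applied with $W=E\subseteq\F_q$, then yields an additive $R$ of degree $<|E|\le q$ with coefficients in $\Tcal$ and $R|_E=\varphi$.

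Finally, $c_{\varrho b}\equiv-cb^2+R(b)\pmod\Delta$ for all $b\in E$. So $U$ and $G_{E,\Delta,R}$ have the same $p$-subgroup data $(\varrho E,\Delta;\,-cb^2+R(b)+\Delta)$. By Proposition~\ref{prop:fibre_decomposition} the data determine the subgroup, hence $U=G_{E,\Delta,R}$. Part (2) is then Theorem~\ref{thm:odd_triangular_closed_model}.
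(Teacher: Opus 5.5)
Your proof is correct and is essentially the paper's argument. You choose $0\ne\varrho\in V$, set $c=-\varrho^{q+1}/2$, and show that $c_{\varrho b}+cb^2$ lies in $\Tcal$ and is additive modulo $\Delta$ via Lemma~\ref{lem:closure_identities}(1). You then lift to an $\F_p$-linear map into $\Tcal$, realize it by Corollary~\ref{cor:additive_polynomial_representation}, and conclude with Proposition~\ref{prop:fibre_decomposition}. The only difference is cosmetic: you lift by choosing representatives on a basis of $E$, whereas the paper composes with an $\F_p$-linear section of $\Tcal\to\Tcal/\Delta$.
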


\begin{proof}
Write \((V,\Delta;c_b+\Delta)\) for the data of \(U\), and choose
\(0\ne\varrho\in V\).  Since \(U\) is abelian,
Lemma~\ref{lem:group_law} gives
\(b^q\varrho-\varrho^qb=0\) for every \(b\in V\).  Hence
\(E:=\varrho^{-1}V\) is contained in \(\F_q\).  Put
\(c=-\varrho^{q+1}/2\).

We next reconstruct the fibre cosets.  For \(b\in E\), we have
\((c_{\varrho b}+cb^2)^q+c_{\varrho b}+cb^2
=\varrho^{q+1}b^2+2cb^2=0\).  Hence
\(c_{\varrho b}+cb^2\in\Tcal\), and we may define
\(\overline R(b):=c_{\varrho b}+cb^2+\Delta\in\Tcal/\Delta\).  For
\(b,b'\in E\), Lemma~\ref{lem:closure_identities}\textup{(1)} gives
\[
\begin{aligned}
\overline R(b+b')
 &=c_{\varrho(b+b')}+c(b+b')^2+\Delta\\
 &=c_{\varrho b}+c_{\varrho b'}
   +\varrho^{q+1}bb'+c(b^2+2bb'+b'^2)+\Delta\\
 &=\overline R(b)+\overline R(b'),
\end{aligned}
\]
where the last equality uses \(\varrho^{q+1}=-2c\).  Thus
\(\overline R\) is \(\F_p\)-linear.  Choose an \(\F_p\)-linear section
of \(\Tcal\to\Tcal/\Delta\), and let \(R_0\colon E\to\Tcal\) be the
resulting lift of \(\overline R\).  By
Corollary~\ref{cor:additive_polynomial_representation}, there is a unique
additive polynomial \(R\in\F_{q^2}[T]\) of degree less than \(|E|\),
with all coefficients in \(\Tcal\), such that \(R(b)=R_0(b)\) for every
\(b\in E\).  The fibres of \(U\)
now satisfy \(c_{\varrho b}+\Delta=-cb^2+R(b)+\Delta\).
Proposition~\ref{prop:fibre_decomposition} therefore gives
\(U=G_{E,\Delta,R}\), proving part~\textup{(1)}.  Part~\textup{(2)}
follows from Theorem~\ref{thm:odd_triangular_closed_model}.
\end{proof}

Combining the central family of \eqref{eq:central},
Corollary~\ref{cor:odd_abelian_closed_models}, and
Corollary~\ref{cor:all_p_subgroups_up_to_conjugacy}, we conclude that,
in odd characteristic, the fixed fields of all abelian \(p\)-subgroups of
\(A\) are represented, up to isomorphism,
by \eqref{eq:central} and the family in
Theorem~\ref{thm:odd_triangular_closed_model}.
Corollary~\ref{cor:odd_abelian_closed_models} proves Theorem~B\textup{(1)}.

We now specialize to the subfamily \(R=0\), and write
\(G_{E,\Delta}:=G_{E,\Delta,0}\) and
\(Z:=Z_0=B_\Delta(y+cu^2)\).

\begin{corollary}
\label{cor:odd_triangular_realization}
Let \(q=p^f\) with \(p\) odd.  Every genus occurring among the fixed
subfields of \(p\)-subgroups of \(A(P_\infty)\) is realized by a member
\(\F_{q^2}(X_E,Z)\) of the \(R=0\) subfamily, with equation
\[
        A_\Delta(Z)=cA_E(X_E)^2
\]
for suitable \(E\subseteq\F_q\), \(\Delta\subseteq\Tcal\),
\(c\in\F_q^*\), and \(\varrho\in\F_{q^2}^*\) satisfying
\(\varrho^{q+1}=-2c\).
\end{corollary}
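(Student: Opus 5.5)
We reduce the statement to a comparison of genus values. By Corollary~\ref{cor:all_p_subgroups_up_to_conjugacy}, every $p$-subgroup of $A$ is conjugate to some $U\le A_1(P_\infty)$, and conjugation preserves the fixed field up to isomorphism. By \eqref{eq:gsx_wild_genus_value}, the genus of $H^U$ is $q(q-|\Delta_U|)/(2|V_U||\Delta_U|)$. So the set of genera that occur is contained in the set of values $q(q-p^d)/(2p^{e}p^{d})$, taken over the pairs $(e,d)=(\dim V_U,\dim\Delta_U)$ that are realized by actual subgroups $U$. On the other hand, Theorem~\ref{thm:odd_triangular_closed_model} with $R=0$ gives the genus $q(q-|\Delta|)/(2|E||\Delta|)$ for arbitrary $\F_p$-subspaces $E\subseteq\F_q$ and $\Delta\subseteq\Tcal$. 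Hence it suffices to show the following: for every realized pair $(|V_U|,|\Delta_U|)$, there are subspaces $E\subseteq\F_q$ and $\Delta\subseteq\Tcal$ with $|E|=|V_U|$ and $|\Delta|=|\Delta_U|$, or at least with the same genus value.

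The constraints on $(V,\Delta)$ are as follows. By Lemma~\ref{lem:closure_identities}(2), the alternating form $(b,b')\mapsto b'^qb-b^qb'$ maps $V\times V$ into $\Delta$. This form takes values in $\Tcal$ and is nondegenerate on $\F_{q^2}$ viewed over $\F_p$, up to composing with an $\F_p$-linear functional on $\Tcal$. In principle $|V|$ can reach $q^2$ when $\Delta=\Tcal$ (take $U=A_1(P_\infty)$). The family $G_{E,\Delta}$ only has $|E|\le q$, so one must not match $|V|$ directly. Instead we match the genus value.

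If $\Delta=\Tcal$, the genus is $0$. The $R=0$ family realizes genus $0$ by taking $\Delta=\Tcal$ and any $E$, for example $E=\{0\}$. Note that $E=\{0\}$ is allowed because the construction and Theorem~\ref{thm:odd_triangular_closed_model} do not require $E\ne0$. We also take $\Delta=\Tcal$ when $\dim\Delta_U=f$, which already covers every case with $|\Delta_U|=q$.

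If $\Delta\ne\Tcal$, the key step is to bound $\dim V\le f$. Choose a nonzero $\F_p$-linear functional $\phi\colon\Tcal\to\F_p$ vanishing on $\Delta$. Then $\omega=\phi\circ(\text{commutator form})$ is a nondegenerate alternating $\F_p$-form on the $2f$-dimensional space $\F_{q^2}$. Nondegeneracy holds because $b\mapsto(b'\mapsto b'^qb-b^qb')$ is injective as a map into $\F_p$-linear maps $\F_{q^2}\to\Tcal$, and $\phi$ composed with $\Tcal$-valued $\F_q$-structure still separates; this is checked using the trace form. The subspace $V$ is totally isotropic for $\omega$, so $\dim V\le f$. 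Then we choose $E\subseteq\F_q$ with $|E|=|V_U|$ and $\Delta\subseteq\Tcal$ with $|\Delta|=|\Delta_U|$, and the two genera agree. We also take $c\in\F_q^*$ arbitrary and $\varrho$ with $\varrho^{q+1}=-2c$, which exists by surjectivity of the norm.

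The main obstacle is the isotropy bound, specifically the nondegeneracy of $\omega$. First we note that the commutator form is $\F_q$-bilinear-up-to-twist: it equals $\operatorname{Tr}$-type data, namely $b'^qb-(b'^qb)^q$. So it factors as $b'^qb\mapsto z-z^q$. This map sends $\F_{q^2}$ onto $\Tcal$ with kernel $\F_q$. Composing with $\phi$ gives a nonzero $\F_p$-functional $\psi$ on $\F_{q^2}$. If $\psi(b'^qb)=0$ for all $b'$, then $b=0$, since $b'^qb$ ranges over all of $\F_{q^2}$ when $b\ne0$. This proves nondegeneracy, so isotropic subspaces have dimension at most $f$, which completes the matching of genera.
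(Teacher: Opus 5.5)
Your proof is correct, but it reaches the key step by a different route from the paper.

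\textbf{The paper's route.} The paper handles genus zero with \(\Delta=\Tcal\). For positive genus it quotes \cite[Theorem~3.2]{GarciaStichtenothXing2000}: every positive genus has the form \(\frac12p^{f-j}(p^{f-k}-1)\) with \(0\le j\le f\) and \(0\le k\le f-1\). It then picks \(E\subseteq\F_q\) and \(\Delta\subsetneq\Tcal\) with \(|E|=p^j\) and \(|\Delta|=p^k\), and reads off the genus from Theorem~\ref{thm:odd_triangular_closed_model}.

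\textbf{Your route.} You prove directly the only constraint that matters: \(\dim_{\F_p}V_U\le f\) whenever \(\Delta_U\ne\Tcal\). You derive it from Lemma~\ref{lem:closure_identities}\textup{(2)}. Writing the commutator form as \(z-z^q\) with \(z=b'^qb\), and composing it with a nonzero functional vanishing on \(\Delta_U\), gives a nondegenerate alternating \(\F_p\)-form on \(\F_{q^2}\) for which \(V_U\) is totally isotropic.

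Your final paragraph's nondegeneracy argument is sound. The functional \(\psi\) is nonzero because \(z\mapsto z-z^q\) maps onto \(\Tcal\). When \(b\ne0\), the element \(b'^qb\) runs over all of \(\F_{q^2}\). The earlier sentence about ``\(\F_q\)-structure'' and ``the trace form'' is vague and can simply be dropped.

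\textbf{Comparison.} Your route is self-contained: only the genus formula \eqref{eq:gsx_wild_genus_value} is imported. It also gives a slightly stronger conclusion. Every \(p\)-subgroup with \(\Delta_U\ne\Tcal\) is matched by a member of the \(R=0\) family with the same pair \((|V|,|\Delta|)\), indeed with the same \(\Delta\), not merely the same genus. The paper's route is shorter but relies on the external genus classification.

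Like the paper (its case \(j=0\)), you need \(E=\{0\}\) when \(V_U=\{0\}\). This is legitimate, since the construction only requires \(E\) to be an \(\F_p\)-subspace of \(\F_q\).
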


\begin{proof}
For genus zero, take \(\Delta=\Tcal\).  Then \(A_\Delta(S)=S\), and the
resulting fixed field is rational.  Every positive genus has the form
\(g=\frac12 p^{f-j}(p^{f-k}-1)\) for some integers \(0\le j\le f\) and
\(0\le k\le f-1\), by
\cite[Theorem~3.2]{GarciaStichtenothXing2000}.  Choose
\(E\subseteq\F_q\) and \(\Delta\subsetneq\Tcal\) with \(|E|=p^j\) and
\(|\Delta|=p^k\), and choose \(c\in\F_q^*\) and
\(\varrho\in\F_{q^2}^*\) with \(\varrho^{q+1}=-2c\).  For these choices,
Theorem~\ref{thm:odd_triangular_closed_model} gives the equation
\(A_\Delta(Z)=cA_E(X_E)^2\) and genus
\[
        \frac{q(q-|\Delta|)}{2|E||\Delta|}
        =\frac12p^{f-j}(p^{f-k}-1)=g. \qedhere
\]
\end{proof}

The genus in Theorem~\ref{thm:odd_triangular_closed_model} is independent
of \(R\), which records the additional fibre data needed to represent all
noncentral abelian subgroups.

\begin{remark}
\label{rem:classical_noncentral_prime_order}
Taking \(E=\F_p\), \(\Delta=\{0\}\), and \(R=0\) gives
\[
 A_E(S)=\sum_{i=0}^{f-1}S^{p^i},\qquad
 Z^q+Z=c\left(\sum_{i=0}^{f-1}X_E^{p^i}\right)^2.
\]
Up to scaling, this is the noncentral prime-order model in
\cite[Theorem~2.1\textup{(II)(2)}]{CossidenteKorchmarosTorres2000}.
For \(p=3\) and \(c=\varrho=1\), it specializes to the
third-largest-genus function field studied in
\cite[Section~2.1]{BeelenMontanucciVicino2026b}.
\end{remark}

\begin{remark}
\label{rem:missing_order_p2_family}
Assume that \(p\) is odd, \(\Delta=\{0\}\), \(R=0\), and
\(\dim_{\F_p}E=2\).  Then \(G_{E,\{0\}}\) is elementary abelian of
order \(p^2\), and
\[
 H^{G_{E,\{0\}}}=\F_{q^2}(X_E,Z),\qquad
 Z^q+Z=cA_E(X_E)^2.
\]
Moreover,
\(\bigl(\dim_{\F_p}V_{G_{E,\{0\}}},
|G_{E,\{0\}}\cap Z(A_1(P_\infty))|\bigr)=(2,1)\).
The central and mixed noncentral order-\(p^2\) types in
\cite[Result~2.11]{GattiKorchmaros2024} have the corresponding pairs
\((0,p^2)\) and \((1,p)\).  By \eqref{eq:gsx_wild_genus_value}, the
present, central, and mixed families have respective quotient genera
\(q(q-1)/(2p^2)\), \(q(q-p^2)/(2p^2)\), and \(q(q-p)/(2p^2)\).
These are distinct, so the present family is not conjugate in \(A\) to
either listed type, and its fixed fields are not
isomorphic to theirs.
\end{remark}

Before turning to characteristic two, we adjoin a diagonal scalar automorphism
to the groups in the \(R=0\) family and determine the resulting fixed fields.
The case \(E=\{0\}\) is covered by
Corollary~\ref{cor:central_scalar_extension}, so suppose
\(E\ne\{0\}\).  A scalar \(a\in\F_{q^2}^*\) normalizing
\(G_{E,\Delta}\) must preserve its translation space \(\varrho E\),
hence \(aE=E\).  For \(0\ne b\in E\subseteq\F_q\), this gives
\(a=(ab)/b\in\F_q^*\).

Let \(a\in\F_q^*\) have order \(m>1\), and assume
\(aE=E\) and \(a^{q+1}\Delta=\Delta\).  Since \(a^q=a\), the condition
\(a^{q+1}\Delta=\Delta\) is equivalent to \(a^2\Delta=\Delta\).  Put
\(\gamma_a=[a,0,0]\).  Since
\(\gamma_a^{-1}g_{b,\epsilon}\gamma_a=g_{ab,a^2\epsilon}\)
for \(b\in E\) and \(\epsilon\in\Delta\), the automorphism
\(\gamma_a\) normalizes \(G_{E,\Delta}\); put
\(\widetilde G_{E,\Delta,a}
=G_{E,\Delta}\rtimes\langle\gamma_a\rangle\).
Moreover, \(\gamma_a(u)=au\) and
\(\gamma_a(y+cu^2)=a^2(y+cu^2)\).

Write \(A_E(S)=\sum_j e_jS^{p^j}\) and
\(A_\Delta(S)=\sum_i\alpha_iS^{p^i}\), where the sums run over the
nonzero terms, and set \(T=X_E^m\) and \(W=Z/X_E^2\).  The scalar weights
of \(X_E\) and \(Z\) are \(1\) and \(2\).  From
\(A_E(aS)=aA_E(S)\), every nonzero \(e_j\) satisfies
\(m\mid p^j-1\); from \(A_\Delta(a^2S)=a^2A_\Delta(S)\), every nonzero
\(\alpha_i\) satisfies \(m\mid2(p^i-1)\).  Hence the exponents below are
nonnegative integers.

The functions \(T,W\) are fixed by \(\gamma_a\), and
\(\F_{q^2}(X_E,Z)=\F_{q^2}(T,W)(X_E)\) with \(X_E^m=T\).
The degree argument in the proof of
Theorem~\ref{thm:tame_fixed_field} therefore gives the fixed field below.
Substituting \(Z=WX_E^2\) into \(A_\Delta(Z)=cA_E(X_E)^2\) and dividing
by \(X_E^2\) gives
\[
H^{\widetilde G_{E,\Delta,a}}=\F_{q^2}(T,W),\qquad
\sum_i \alpha_iW^{p^i}T^{2(p^i-1)/m}
=
 c\left(\sum_j e_jT^{(p^j-1)/m}\right)^2.
\]
Corollary~\ref{cor:tame_genus} gives
\[
g\bigl(H^{\widetilde G_{E,\Delta,a}}\bigr)
=\frac{q-|\Delta|}{2m|\Delta|}
 \left(\frac q{|E|}-\gcd(m,2)+1\right).
\]

Together with Corollary~\ref{cor:central_scalar_extension}, this
specialization covers, up to isomorphism, the order-\(dp\)
quotients treated in \cite{DionigiGatti2025}, where \(p,d>3\) are distinct
primes.

\subsection{Characteristic two: quadratic refinements}
\label{subsec:char_two_abelian_structure}

In characteristic two, \(\Tcal=\F_q\).  We first recall the subgroup
existence criterion of \cite[Theorem~3.3]{GarciaStichtenothXing2000}.
We then parameterize the abelian subgroups with prescribed translation and
central spaces by quadratic refinements.  The values of a refinement on a
basis allow us to choose the fibre representatives required by
Theorem~\ref{thm:quadratic_additive_normal_form}.

\begin{lemma}
\label{lem:char_two_existence_criterion}
Assume that \(p=2\), and let
\(V\subseteq\F_{q^2}\) and \(\Delta\subseteq\Tcal\) be
\(\F_2\)-subspaces.  There is a subgroup \(U\le A_1(P_\infty)\) with
translation space \(V\) and central space \(\Delta\) if and only if
\[
        V^{q+1}:=\{b^{q+1}:b\in V\}\subseteq\Delta.
\]
\end{lemma}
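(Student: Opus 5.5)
Necessity is immediate from Lemma~\ref{lem:group_law}. If \(U\) has translation space \(V\) and central space \(\Delta\), take \(b\in V\) and any element \(g=[1,b,c_b]\in U\) over \(b\). In characteristic two the product formula gives
\[
g^2=[1,2b,2c_b+b^{q+1}]=[1,0,b^{q+1}]\in U,
\]
so \(b^{q+1}\in\Delta\). This is the same computation already used in the proof of Lemma~\ref{lem:quadratic_primitive}.

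For sufficiency, assume \(V^{q+1}\subseteq\Delta\). We build \(U\) directly from \(p\)-subgroup data \((V,\Delta;c_b+\Delta)\). The plan is to choose representatives \(c_b\) so that three conditions hold:
\begin{itemize}
\item[(i)] \(c_b^q+c_b=b^{q+1}\);
\item[(ii)] the cocycle relation \(c_{b+b'}\equiv c_b+c_{b'}+b'^qb\pmod\Delta\) of Lemma~\ref{lem:closure_identities};
\item[(iii)] the commutator condition \(b'^qb-b^qb'\in\Delta\).
\end{itemize}
We then put \(U=\{[1,b,c_b+\delta]:b\in V,\ \delta\in\Delta\}\). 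Closure under products follows from (ii) together with centrality of \(U_\Delta\). Inverses lie in \(U\) because \(g^{-1}=[1,-b,b^{q+1}-c]\) and \(b^{q+1}\in\Delta\), while \(-b=b\). The translation and central spaces are \(V\) and \(\Delta\) by construction, with the central space exactly \(\Delta\) because \(c_0=0\).

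Condition (iii) comes for free from the hypothesis. Polarizing the norm gives
\[
(b+b')^{q+1}-b^{q+1}-b'^{q+1}=b^qb'+b'^qb.
\]
The left side lies in \(\Delta\), and in characteristic two \(b^qb'+b'^qb=b'^qb-b^qb'\).

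To satisfy (i) and (ii), fix an \(\F_2\)-basis \(b_1,\dots,b_\nu\) of \(V\) and choose \(c_{b_r}\) with \(c_{b_r}^q+c_{b_r}=b_r^{q+1}\). Such a choice exists because the trace is surjective onto \(\F_q\) and \(b_r^{q+1}\in\F_q\). For \(b=\sum_{r\in S}b_r\) we then set
\[
c_b=\sum_{r\in S}c_{b_r}+\sum_{r<s,\ r,s\in S}b_s^qb_r.
\]
Property (i) follows by expanding \(b^{q+1}=\sum_{r\in S}b_r^{q+1}+\sum_{r<s}(b_r^qb_s+b_s^qb_r)\), which matches the trace of the cross terms. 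For (ii), the difference \(c_{b+b'}-c_b-c_{b'}-b'^qb\) is a sum of terms of two kinds. The first kind is \(b_r^{q+1}\), which comes from shared basis indices via \(b_r+b_r=0\). The second kind is \(b_s^qb_r-b_r^qb_s\), which comes from reordering pairs. Both kinds lie in \(\Delta\), so (ii) holds.

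The main obstacle is this bookkeeping in (ii). To avoid grinding through it, I would instead verify closure inductively. Given a group \(U_k\) with translation space \(\langle b_1,\dots,b_k\rangle\), set \(U_{k+1}=\langle U_k,[1,b_{k+1},c_{b_{k+1}}]\rangle\). Every commutator and square of the new generator lies in \([1,0,\Delta]\), by (iii) and by the necessity computation. Hence \(U_{k+1}\cap\ker\pi\) remains \([1,0,\Delta]\), provided we start with \(U_0=[1,0,\Delta]\). To see this, note that \(U_k\) is normalized by the new generator modulo the central group \([1,0,\Delta]\), so \(U_{k+1}=U_k\cup U_k[1,b_{k+1},c]\). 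This has order \(2|U_k|\), and its kernel part is unchanged. Proposition~\ref{prop:fibre_decomposition} then confirms the translation and central spaces.
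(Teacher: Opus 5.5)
Your proof is correct, and both of your sufficiency arguments go through. In the explicit version, for each pair \(r<s\) the terms \(b_s^qb_r\) and \(b_r^qb_s\) occur an even number of times in total in \(c_{b+b'}-c_b-c_{b'}-b'^qb\). So this difference really is a sum of diagonal terms \(b_r^{q+1}\) and antisymmetric pairs \(b_s^qb_r+b_r^qb_s\), all of which lie in \(\Delta\). In the inductive version, the new lift \(g\) in fact normalizes \(U_k\) outright, not just modulo \([1,0,\Delta]\), because \([g,h]\in U_0\subseteq U_k\).

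The paper argues more abstractly. It works in \(\widehat U=\pi^{-1}(V)\) and notes that every square lies in \(U_\Delta\). Hence \(\widehat U/U_\Delta\) has exponent two and is an \(\F_2\)-vector space, and the paper takes the inverse image of a complement of \(U_{\Tcal}/U_\Delta\). This route never checks your condition (iii) separately: exponent two forces commutativity, and your polarization identity is the concrete form of that fact. It also needs no bookkeeping.

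Your inductive argument is that splitting carried out by hand, since a complement is spanned by lifts of a basis of \(V\). Your explicit formula for \(c_b\) gives something the paper does not provide at this point. It is a concrete fibre section over every \(b\in V\), valid also for nonabelian \(U\), and it can be substituted directly into the basis-free correction polynomial \eqref{eq:Theta}.
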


\begin{proof}
Necessity follows from
\([1,b,c]^2=[1,0,b^{q+1}]\in U\).  Conversely, suppose that
\(V^{q+1}\subseteq\Delta\), and put \(\widehat U=\pi^{-1}(V)\),
\(U_{\Tcal}:=\ker\pi\), and
\(U_\Delta:=\{[1,0,\delta]:\delta\in\Delta\}\).  It is enough to
construct \(U\le\widehat U\) with
\(\pi(U)=V\) and \(U\cap U_{\Tcal}=U_\Delta\).

Every \([1,b,c]\in\widehat U\) satisfies
\([1,b,c]^2=[1,0,b^{q+1}]\in U_\Delta\).  Thus every element of
\(\widehat U/U_\Delta\) has order at most two, so this quotient is abelian
and is an \(\F_2\)-vector space.  The exact sequence
\[
0\longrightarrow U_{\Tcal}/U_\Delta\longrightarrow
\widehat U/U_\Delta\longrightarrow V\longrightarrow0
\]
therefore splits.  Choose a complement \(\overline U\) of
\(U_{\Tcal}/U_\Delta\) in \(\widehat U/U_\Delta\), and let \(U\) be its
inverse image in \(\widehat U\).  Then \(\overline U\to V\) is an
isomorphism, so \(\pi(U)=V\) and
\(U\cap U_{\Tcal}=U_\Delta\), as required.
\end{proof}

\begin{samepage}
\begin{corollary}
\label{cor:char_two_abelian_structure}
Assume that \(p=2\).
\begin{enumerate}
\item If \(U\le A_1(P_\infty)\) is noncentral and abelian and
\(0\ne\varrho\in V_U\), then \(V_U=\varrho E\) for an
\(\F_2\)-subspace \(E\subseteq\F_q\).

\item Given \(\varrho\in\F_{q^2}^*\), a nonzero \(\F_2\)-subspace
\(E\subseteq\F_q\), and an \(\F_2\)-subspace
\(\Delta\subseteq\F_q\), put \(E^{(2)}=\{a^2:a\in E\}\).  Then an
abelian subgroup with translation space \(\varrho E\) and central space
\(\Delta\) exists if and only if
\[
        \varrho^{q+1}E^{(2)}\subseteq\Delta.
\]
\end{enumerate}
\end{corollary}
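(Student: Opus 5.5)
Part (1) follows the argument in the proof of Corollary~\ref{cor:odd_abelian_closed_models}, which does not depend on the parity of \(p\). Since \(U\) is abelian, Lemma~\ref{lem:group_law} shows that the commutator \([1,0,b'^qb-b^qb']\) is trivial for all \(b,b'\in V_U\). Taking \(b'=\varrho\) gives \(b^q\varrho=\varrho^qb\) for every \(b\in V_U\). Therefore \((b/\varrho)^q=b/\varrho\), so \(b/\varrho\in\F_q\). Put \(E:=\varrho^{-1}V_U\subseteq\F_q\). This set is an \(\F_2\)-subspace because \(V_U\) is one, and \(V_U=\varrho E\).

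For part (2), necessity: let \(U\) be an abelian subgroup with translation space \(\varrho E\) and central space \(\Delta\). For \(e\in E\), the element \([1,\varrho e,c]\in U\) has square \([1,0,(\varrho e)^{q+1}]\) by the product formula. Since \(e\in\F_q\), we have \((\varrho e)^{q+1}=\varrho^{q+1}e^2\), and this element lies in \(U\cap\ker\pi=U_\Delta\). Hence \(\varrho^{q+1}E^{(2)}\subseteq\Delta\). One could also quote Lemma~\ref{lem:char_two_existence_criterion} directly.

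For sufficiency, I would not take an arbitrary complement as in the proof of Lemma~\ref{lem:char_two_existence_criterion}, since that need not give an abelian group. Instead I would construct the group explicitly, in the style of \(G_{E,\Delta,R}\). Choose a basis \(e_1,\dots,e_n\) of \(E\). For each \(b=\sum a_ie_i\in E\) with \(a_i\in\F_2\), I want \(c_b\in\F_{q^2}\) satisfying
\[
c_b^q+c_b=\varrho^{q+1}b^2,\qquad c_{b+b'}\equiv c_b+c_{b'}+\varrho^{q+1}bb'\pmod{\Delta}.
\]
Here the cross term \((\varrho b')^q\varrho b\) equals \(\varrho^{q+1}bb'\) because \(b,b'\in\F_q\). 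Since \(\varrho^{q+1}bb'\) is symmetric and bilinear, a natural choice is
\[
c_b=\sum_i a_i\gamma_i+\varrho^{q+1}\sum_{i<j}a_ia_je_ie_j,
\]
where each \(\gamma_i\) satisfies \(\gamma_i^q+\gamma_i=\varrho^{q+1}e_i^2\). Such \(\gamma_i\) exist by surjectivity of the trace map. I would then put
\[
U=\{[1,\varrho b,c_b+\delta]:b\in E,\ \delta\in\Delta\}.
\]

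The main obstacle is checking the trace condition. In characteristic two, \(c_b^q+c_b\) equals \(\sum_i a_i\varrho^{q+1}e_i^2\) plus \(2\varrho^{q+1}\sum_{i<j}a_ia_je_ie_j\), and the cross terms in \(\varrho^{q+1}b^2=\varrho^{q+1}\bigl(\sum a_ie_i\bigr)^2\) vanish. The two sides therefore agree, so every element of \(U\) lies in \(A_1(P_\infty)\). Closure of \(U\) under multiplication then reduces to the congruence displayed above. The terms that fail to match exactly are of the form \(a_ia_i'\bigl(\gamma_i\cdot 2\bigr)\) and \(\varrho^{q+1}a_ia_i'e_i^2\). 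The first vanishes, and the second lies in \(\varrho^{q+1}E^{(2)}\subseteq\Delta\), which is exactly where the hypothesis is used. Note also that the set \(\{e_i^2\}\) spans \(E^{(2)}\).

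Once closure holds, \(U\) is a subgroup: it is finite and closed, and inverses are automatic. Its translation space is \(\varrho E\). Its central space is \(\Delta\), because \(c_0=0\) and the fibre over \(b=0\) is \(U_\Delta\). Finally, \(U\) is abelian because every commutator is \([1,0,\varrho^{q+1}(b'b-bb')]=1\), using \(b,b'\in\F_q\).
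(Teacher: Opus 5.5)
Your proof is correct. Part~(1) and the necessity half of part~(2) match the paper's argument. For sufficiency you take a genuinely different route.

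The paper observes that for $e,e'\in E\subseteq\F_q$ one has $(\varrho e')^q\varrho e=\varrho^{q+1}ee'=(\varrho e)^q\varrho e'$. By the commutator formula, \emph{every} subgroup with translation space $\varrho E$ is therefore abelian. The paper then simply quotes Lemma~\ref{lem:char_two_existence_criterion} with $V^{q+1}=\varrho^{q+1}E^{(2)}$; that lemma's proof picks a complement in an elementary abelian $2$-group.

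Your worry that an arbitrary complement ``need not give an abelian group'' is unfounded. Your own closing commutator computation uses only $b,b'\in E\subseteq\F_q$, so it applies to any subgroup with translation space $\varrho E$, not just the one you build.

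What your explicit construction buys is concreteness. The representatives
\[
c_b=\sum_i a_i\gamma_i+\varrho^{q+1}\sum_{i<j}a_ia_je_ie_j
\]
give an explicit quadratic refinement of the alternating form $(e,e')\mapsto\varrho^{q+1}ee'+\Delta$, built from its upper-triangular part. This anticipates Proposition~\ref{prop:char_two_quadratic_refinements} and supplies directly the basis data needed in \eqref{eq:explicit_qa_theta}. The paper's route is shorter but non-constructive.

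A cosmetic point: since $a_i,a_i'\in\F_2$ and the characteristic is two, $(a_i+a_i')\gamma_i=a_i\gamma_i+a_i'\gamma_i$ holds exactly, so no ``$2\gamma_i$'' terms arise. The only discrepancy is
\[
c_{b+b'}+c_b+c_{b'}+\varrho^{q+1}bb'=\varrho^{q+1}\sum_i a_ia_i'e_i^2,
\]
which lies in $\Delta$ because each $\varrho^{q+1}e_i^2$ does.
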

\end{samepage}

\begin{proof}
For part~\textup{(1)}, let \(b\in V_U\).  Since \(U\) is abelian,
Lemma~\ref{lem:group_law} gives
\(\varrho^qb-b^q\varrho=0\).  Hence
\((b/\varrho)^q=b/\varrho\), so \(b/\varrho\in\F_q\).  Thus
\(E=\varrho^{-1}V_U\).

For part~\textup{(2)}, put \(V=\varrho E\).  For \(e,e'\in E\),
\((\varrho e')^q\varrho e=(\varrho e)^q\varrho e'\), so every subgroup
with translation space \(V\) is abelian.  Moreover,
\(V^{q+1}=\varrho^{q+1}E^{(2)}\), and
Lemma~\ref{lem:char_two_existence_criterion} gives the stated criterion.
\end{proof}

\begin{corollary}
\label{cor:char_two_group_structure}
Assume that \(p=2\), and let \(U\) be an abelian subgroup with translation
space \(\varrho E\) and central space \(\Delta\).  Put
\(e_E=\dim_{\F_2}E\), \(e_\Delta=\dim_{\F_2}\Delta\).  Then
\(e_E\le e_\Delta\) and
\[
        U\simeq C_4^{\,e_E}\times C_2^{\,e_\Delta-e_E}.
\]
\end{corollary}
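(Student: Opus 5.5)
We determine the structure of \(U\) through its squaring map.  By Corollary~\ref{cor:char_two_abelian_structure}\textup{(1)} and Proposition~\ref{prop:fibre_decomposition}, \(|U|=|E|\,|\Delta|=2^{e_E+e_\Delta}\).  Since \(U\) is abelian, \(g\mapsto g^2\) is a homomorphism \(U\to U\).  The plan is to compute its image and kernel, and then read off the invariants of a finite abelian \(2\)-group.

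By Lemma~\ref{lem:group_law}, \([1,b,c]^2=[1,0,b^{q+1}]\).  Writing \(b=\varrho e\) with \(e\in E\subseteq\F_q\), this becomes \([1,0,\varrho^{q+1}e^2]\).  Two consequences follow.  First, every square is central, so \(U^2\le U_\Delta\) and every element has order dividing \(4\); in particular, \(U\) has exponent at most \(4\).  Second, the map \(e\mapsto\varrho^{q+1}e^2\) is \(\F_2\)-linear, being a nonzero scalar times Frobenius, and it is injective on \(E\).  Hence the squaring homomorphism has image
\[
 U^2=\{[1,0,\varrho^{q+1}e^2]:e\in E\},
\]
which is a subgroup of order \(2^{e_E}\).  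This image is contained in \(U_\Delta\) by Corollary~\ref{cor:char_two_abelian_structure}\textup{(2)}, or directly because it lies in \(U\cap\ker\pi\).  Comparing orders inside the elementary abelian group \(U_\Delta\) of order \(2^{e_\Delta}\) gives \(e_E\le e_\Delta\).

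For the decomposition, write \(U\simeq C_4^{\,\alpha}\times C_2^{\,\beta}\); this is possible since the exponent divides \(4\).  Then \(|U^2|=2^{\alpha}\) and \(|U|=2^{2\alpha+\beta}\).  Hence \(\alpha=e_E\) and \(2e_E+\beta=e_E+e_\Delta\), which gives \(\beta=e_\Delta-e_E\).

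There is no real obstacle.  The only points to check are the injectivity of \(e\mapsto\varrho^{q+1}e^2\) and that \(U^2\) is exactly the set of squares rather than a larger subgroup; the latter holds because \(U\) is abelian, so the squares form the image of a homomorphism.
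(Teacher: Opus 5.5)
Your proposal is correct and follows essentially the same route as the paper: identify the image of the squaring homomorphism as \(\{[1,0,\varrho^{q+1}e^2]:e\in E\}\) of order \(2^{e_E}\), note that \(U\) has exponent at most four and \(|U|=2^{e_E+e_\Delta}\), and read off \(r=e_E\), \(s=e_\Delta-e_E\) for \(U\simeq C_4^r\times C_2^s\). The only minor difference is that you obtain \(e_E\le e_\Delta\) directly from the inclusion of the squares in \(U_\Delta\), whereas the paper leaves it implicit in \(s=e_\Delta-e_E\ge0\).
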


\begin{proof}
The image of the squaring map is
\(\{[1,0,\varrho^{q+1}a^2]:a\in E\}\), which has order \(2^{e_E}\),
and \(U\) has exponent at most four.  Proposition~\ref{prop:fibre_decomposition}
gives \(|U|=2^{e_E+e_\Delta}\).  Writing
\(U\simeq C_4^r\times C_2^s\), the orders of the subgroup of squares and
of \(U\) give \(r=e_E\) and \(s=e_\Delta-e_E\).
\end{proof}

Corollary~\ref{cor:char_two_group_structure} determines the abstract group
structure.  To distinguish subgroups with the same translation and central
spaces, it remains to parameterize their fibre cosets.  Fix
\(\varrho\in\F_{q^2}^*\), a nonzero \(\F_2\)-subspace
\(E\subseteq\F_q\), and an \(\F_2\)-subspace \(\Delta\subseteq\F_q\)
satisfying \(\varrho^{q+1}E^{(2)}\subseteq\Delta\), and put
\(\mu=\varrho^{q+1}\).  The map
\((e,e')\mapsto\mu ee'+\Delta\) from \(E\times E\) to
\(\F_q/\Delta\) is \(\F_2\)-bilinear and alternating, since
\(\mu e^2\in\Delta\) for every \(e\in E\).

\begin{definition}
With \(\mu,E,\Delta\) as above, a map
\(\eta\colon E\to\F_q/\Delta\) is a \emph{quadratic refinement} of
this bilinear map if
\begin{equation}
\label{eq:char_two_refinement_identity}
\eta(e+e')+\eta(e)+\eta(e')=\mu ee'+\Delta
 \qquad(e,e'\in E).
\end{equation}
\end{definition}

Choose \(\theta\in\F_{q^2}\) with \(\theta^q+\theta=1\).  Let \(U\) be an
abelian subgroup with translation space \(\varrho E\) and central space
\(\Delta\), and write \(c_{\varrho e}+\Delta\) for its fibre over
\(\varrho e\).  Since
\((\theta\mu e^2)^q+\theta\mu e^2=(\varrho e)^{q+1}\), we have
\(c_{\varrho e}+\theta\mu e^2\in\F_q\).  Thus
\(\eta_U(e)=c_{\varrho e}+\theta\mu e^2+\Delta\) defines a map
\(E\to\F_q/\Delta\).  Conversely, for a quadratic refinement \(\eta\), put
\[
 U_\eta=\{[1,\varrho e,\theta\mu e^2+r]:
 e\in E,\ r\in\F_q,\ r+\Delta=\eta(e)\}.
\]

\begin{proposition}
\label{prop:char_two_quadratic_refinements}
The assignments \(U\mapsto\eta_U\) and \(\eta\mapsto U_\eta\) give inverse
bijections between the abelian subgroups with translation space
\(\varrho E\) and central space \(\Delta\) and the quadratic refinements
satisfying \eqref{eq:char_two_refinement_identity}.
\end{proposition}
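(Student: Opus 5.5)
The plan is to verify four things in turn: \(\eta_U\) is a quadratic refinement, \(U_\eta\) is an abelian subgroup with the stated spaces, and the two assignments undo each other. Everything rests on the product formula of Lemma~\ref{lem:group_law}, on Proposition~\ref{prop:fibre_decomposition} (subgroup data determine \(U\)), and on Lemma~\ref{lem:closure_identities}.

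\emph{Step 1: \(\eta_U\) is a refinement.} Set \(b=\varrho e\) and \(b'=\varrho e'\). Lemma~\ref{lem:closure_identities}\textup{(1)} gives
\[
c_{\varrho(e+e')}\equiv c_{\varrho e}+c_{\varrho e'}+\varrho^{q}e'\varrho e\pmod\Delta .
\]
Since \(e'\in\F_q\), the cross term is \(\mu ee'\). Next use \(\theta\mu(e+e')^2=\theta\mu e^2+\theta\mu e'^2\), which holds because the characteristic is two. Adding these gives \eqref{eq:char_two_refinement_identity}. The values land in \(\F_q/\Delta\) by the computation preceding the proposition. \(\eta_U\) is well defined because it depends only on the coset \(c_{\varrho e}+\Delta\).

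\emph{Step 2: \(U_\eta\) is an abelian subgroup with the stated spaces.} Each element lies in \(A_1(P_\infty)\). Indeed, \(r\in\F_q=\Tcal\) gives \(r^q+r=0\), and \((\theta\mu e^2)^q+\theta\mu e^2=\mu e^2=(\varrho e)^{q+1}\).
\begin{itemize}
\item Identity: \(\eta(0)=\Delta\) follows by putting \(e=e'=0\) in the refinement identity. So \([1,0,0]\in U_\eta\), and more generally \([1,0,\delta]\in U_\eta\) exactly when \(\delta\in\Delta\).
\item Closure under products: \([1,\varrho e,\theta\mu e^2+r][1,\varrho e',\theta\mu e'^2+r']\) has third coordinate \(\theta\mu(e+e')^2+r+r'+\mu ee'\). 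Now \(r+r'+\mu ee'\) lies in \(\eta(e)+\eta(e')+\mu ee'+\Delta=\eta(e+e')\).
\item Inverses: these follow from finiteness.
\end{itemize}
So \(U_\eta\) is a subgroup, with translation space \(\varrho E\) and central space \(\Delta\). It is abelian by Corollary~\ref{cor:char_two_abelian_structure}, since every subgroup with translation space \(\varrho E\), \(E\subseteq\F_q\), is abelian.

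\emph{Step 3: the assignments are mutually inverse.} For \(\eta_{U_\eta}\), choose the fibre representative \(c_{\varrho e}=\theta\mu e^2+r\). Then \(\eta_{U_\eta}(e)=r+\Delta=\eta(e)\). For \(U_{\eta_U}\), its fibre over \(\varrho e\) is \(\theta\mu e^2+\eta_U(e)=c_{\varrho e}+\Delta\), which is the fibre of \(U\). Both groups have the same translation space, central space and fibres, so Proposition~\ref{prop:fibre_decomposition} gives \(U_{\eta_U}=U\).

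The only delicate point is Step 2, namely checking closure and the central space. That check hinges on \(\eta(0)=\Delta\) and on the cross term \(b'^qb=\mu ee'\) matching the bilinear form. Both are handled above, so I expect no real obstacle.
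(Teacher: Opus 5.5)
Your proposal is correct and follows essentially the same route as the paper's proof. The refinement identity is exactly the closure condition coming from the product formula, which you access through Lemma~\ref{lem:closure_identities}\textup{(1)}. The subgroup property comes from $\eta(0)=0$ plus finiteness, abelianness from the commutator formula since $E\subseteq\F_q$, and the inverse bijection is read off from the fibre cosets; you are only more explicit than the paper in checking membership in $A_1(P_\infty)$ and identifying the central space.
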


\begin{proof}
For each \(e\in E\), the condition \(c^q+c=\mu e^2\) is equivalent to
\(c+\theta\mu e^2\in\F_q\).  Thus each fibre coset has a unique expression
\(\theta\mu e^2+(r_e+\Delta)\), with \(r_e+\Delta\in\F_q/\Delta\).
By Lemma~\ref{lem:group_law}, the corresponding set of automorphisms is
closed under multiplication if and only if
\[
 r_{e+e'}+\Delta=r_e+r_{e'}+\mu ee'+\Delta
 \qquad(e,e'\in E),
\]
which is \eqref{eq:char_two_refinement_identity} for
\(\eta(e)=r_e+\Delta\).  Hence every subgroup with the prescribed spaces
gives a quadratic refinement.

Conversely, a refinement satisfies \(\eta(0)=0\), so \(U_\eta\) is a
finite set of automorphisms containing the identity and closed under
multiplication.  It is therefore a subgroup, with translation and central
spaces \(\varrho E\) and \(\Delta\).  It is abelian by the commutator formula
in Lemma~\ref{lem:group_law}, since \(E\subseteq\F_q\).
The definitions give \(U_{\eta_U}=U\) and \(\eta_{U_\eta}=\eta\).
\end{proof}

Corollary~\ref{cor:char_two_abelian_structure}\textup{(2)} and
Proposition~\ref{prop:char_two_quadratic_refinements} show that refinements
exist for the fixed data.  The difference of two quadratic refinements is
\(\F_2\)-linear, and adding any \(\F_2\)-linear map
\(E\to\F_q/\Delta\) to a refinement gives another.  Thus, writing
\(e_E=\dim_{\F_2}E\), the refinements are determined by arbitrary values on
an \(\F_2\)-basis of \(E\), and there are exactly
\(\bigl(q/|\Delta|\bigr)^{e_E}\) abelian subgroups with the prescribed
translation and central spaces.

For a refinement \(\eta\), choose an \(\F_2\)-basis
\(e_1,\ldots,e_{e_E}\) of \(E\) and lifts
\(\widetilde\xi_i\in\F_q\) of \(\eta(e_i)\).  For the basis
\(\mathcal B=(\varrho e_1,\ldots,\varrho e_{e_E})\) of \(\varrho E\), the elements
\[
 c_{\varrho e_i}=\theta\mu e_i^2+\widetilde\xi_i,
 \qquad 1\le i\le e_E,
\]
are fibre representatives for \(U_\eta\).  They supply the data required in
\eqref{eq:explicit_qa_theta}, so
Theorem~\ref{thm:quadratic_additive_normal_form} gives an explicit
\(\F_2\)-quadratic equation for \(H^{U_\eta}\).  This completes the proof of
Theorem~B\textup{(2)}.

\subsection{The case of zero refinement}
\label{subsec:char_two_zero_refinement_family}

In the preceding notation, the zero map \(\eta\colon E\to\F_q/\Delta\)
satisfies
\eqref{eq:char_two_refinement_identity} if and only if
\[
 \mu\left\langle bb':b,b'\in E\right\rangle_{\F_2}\subseteq\Delta.
\]
Under this condition, Proposition~\ref{prop:char_two_quadratic_refinements}
gives the subgroup
\[
 U_0=\{[1,\varrho b,\theta\mu b^2+\delta]:b\in E,\ \delta\in\Delta\}.
\]
The existence criterion requires only \(\mu b^2\in\Delta\) for \(b\in E\),
whereas zero refinement requires \(\mu bb'\in\Delta\) for all \(b,b'\in E\).

Let \(q=Q^d\), where \(Q=2^e\) and \(d\ge3\) is odd, and take
\(\varrho=1\) and \(E=\Delta=\F_Q\).  Then \(\mu=1\), and the
zero-refinement condition holds because \(\F_Q\) is closed under
multiplication.

Choose \(\theta\in\F_{Q^2}\) with \(\theta^Q+\theta=1\).  Since
\(d\) is odd, we also have \(\theta^q+\theta=1\).  Write the resulting
group as
\[
        N_Q=\{[1,b,\theta b^2+\delta]:b,\delta\in\F_Q\}.
\]

For \(r\ge1\), define
\[
        L_r(S)=\sum_{i=0}^{r-1}S^{Q^i},
        \qquad
        \Phi_{d,Q}(S)=\sum_{r=1}^{d-1}L_{r+1}(S)L_r(S),
\]
and set \(X=x^Q+x\), \(Z=y^Q+y+x^{Q+1}\).
Here \(L_d\) is the \(Q\)-trace polynomial.

\begin{samepage}
\begin{theorem}
\label{thm:char_two_closed_family}
With the preceding notation, the following hold.
\begin{enumerate}
\item The set \(N_Q\) is an abelian subgroup of \(A_1(P_\infty)\) of
order \(Q^2\), isomorphic to \(C_4^{\,e}\).
\item Its fixed field has the equation
\[
        H^{N_Q}=\F_{q^2}(X,Z),
        \qquad
        L_d(Z)=\Phi_{d,Q}(X).
\]
\item Its genus is \(g(H^{N_Q})=\frac12Q^{d-1}(Q^{d-1}-1)\).
\end{enumerate}
\end{theorem}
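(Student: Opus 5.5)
Part~(1) follows from the zero-refinement discussion, part~(2) from a direct invariance check, a degree count and a telescoping computation, and part~(3) from \eqref{eq:gsx_wild_genus_value}.

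\emph{Part (1).} With \(\varrho=1\), \(E=\Delta=\F_Q\) and \(\mu=1\), the group \(N_Q\) is exactly the subgroup \(U_0\) attached to the zero refinement. That refinement is admissible because \(\F_Q\) is closed under multiplication. So Proposition~\ref{prop:char_two_quadratic_refinements} shows that \(N_Q\) is an abelian subgroup of \(A_1(P_\infty)\) with translation space \(\F_Q\) and central space \(\F_Q\). Proposition~\ref{prop:fibre_decomposition} then gives \(|N_Q|=Q^2\). Corollary~\ref{cor:char_two_group_structure} with \(e_E=e_\Delta=e\) gives \(N_Q\simeq C_4^{\,e}\).

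\emph{Invariance of \(X\) and \(Z\).} Take \(g=[1,b,\theta b^2+\delta]\) with \(b,\delta\in\F_Q\). Since \(b^q=b\), it acts by \(x\mapsto x+b\) and \(y\mapsto y+bx+\theta b^2+\delta\).
\begin{itemize}
\item The function \(X=x^Q+x\) is fixed because \(b^Q=b\).
\item Using \(\theta^Q+\theta=1\) and \(\delta^Q=\delta\), the function \(y^Q+y\) increases by \(b(x^Q+x)+b^2\).
\item The function \(x^{Q+1}\) increases by \(bx^Q+bx+b^2\).
\end{itemize}
In characteristic two these increments cancel, so \(Z\) is fixed.

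\emph{Fixed field.} The element \(x\) is a root of \(T^Q+T-X\). After adjoining \(x\), the element \(y\) is a root of \(T^Q+T-(Z+x^{Q+1})\). Hence \([H:\F_{q^2}(X,Z)]\le Q^2=|N_Q|\), and the argument of Theorem~\ref{thm:wild_fixed_field} gives \(H^{N_Q}=\F_{q^2}(X,Z)\).

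\emph{The equation.} The key tool is the telescoping identity \(L_r(S^Q+S)=S^{Q^r}+S\), valid in characteristic two.
\begin{itemize}
\item Applied to \(S=y\) with \(r=d\), it gives \(L_d(y^Q+y)=y^q+y=x^{q+1}\). Therefore
\[
L_d(Z)=x^{q+1}+\sum_{i=0}^{d-1}x^{Q^i(Q+1)}.
\]
\item Applied to \(S=x\), it gives \(L_r(X)=x^{Q^r}+x\) for every \(r\ge1\). Hence
\[
L_{r+1}(X)L_r(X)=x^{Q^{r+1}+Q^r}+x^{Q^{r+1}+1}+x^{Q^r+1}+x^2.
\]
\end{itemize}
Summing the second identity over \(1\le r\le d-1\) produces three groups of terms:
\begin{itemize}
\item the first terms give \(\sum_{r=1}^{d-1}x^{Q^r(Q+1)}\);
\item the middle terms telescope to \(x^{Q^d+1}+x^{Q+1}=x^{q+1}+x^{Q+1}\);
\item the terms \(x^2\) contribute \((d-1)x^2\), which vanishes because \(d\) is odd.
\end{itemize}
Thus \(\Phi_{d,Q}(X)=x^{q+1}+\sum_{r=0}^{d-1}x^{Q^r(Q+1)}=L_d(Z)\). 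This bookkeeping is the main step to verify carefully; it is the only place where the oddness of \(d\) is used beyond the choice of \(\theta\).

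\emph{Part (3).} Formula \eqref{eq:gsx_wild_genus_value} with \(|V|=|\Delta|=Q\) and \(q=Q^d\) gives
\[
g(H^{N_Q})=\frac{q(q-Q)}{2Q^2}=\tfrac12Q^{d-1}(Q^{d-1}-1).
\]
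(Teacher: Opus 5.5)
Your proof is correct and takes essentially the same route as the paper. Part~(1) goes through the zero refinement and Corollary~\ref{cor:char_two_group_structure}, and part~(2) rests on the same telescoping identity $\Phi_{d,Q}(S^Q+S)=S^{q+1}+L_d(S^{Q+1})$ evaluated at $S=x$; the only difference is that you check invariance of $X,Z$ and the degree bound by hand, where the paper identifies $S^{Q+1}$ as a correction polynomial and $A_\Delta=L_d$ and then invokes Lemma~\ref{lem:wild_coordinate_freedom} and Theorem~\ref{thm:wild_fixed_field}.
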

\end{samepage}

\begin{proof}
By Proposition~\ref{prop:char_two_quadratic_refinements}, \(N_Q\) is an
abelian subgroup with translation and central spaces \(\F_Q\).
Corollary~\ref{cor:char_two_group_structure} gives
\(N_Q\simeq C_4^{\,e}\), and hence \(|N_Q|=Q^2\).  This proves
part~\textup{(1)}.

We now determine the fixed-field equation.  Here
\(B_V(S)=B_\Delta(S)=S^Q+S\).  Since \(L_d(S^Q+S)=S^q+S\),
Proposition~\ref{prop:central_complementary_factor} gives
\(A_\Delta(S)=L_d(S)\).  For the section \(c_b=\theta b^2\) and
\(b\in\F_Q\),
\[
\lambda_b(S)=B_\Delta(b^qS+\theta b^2)
=bS^Q+bS+b^2
=(S+b)^{Q+1}+S^{Q+1}.
\]
Thus the correction polynomial \(\Theta(S)=S^{Q+1}\) satisfies
\(\Theta(S+b)-\Theta(S)=\lambda_b(S)\) for every \(b\in\F_Q\).
Lemma~\ref{lem:wild_coordinate_freedom} therefore allows us to take the
invariant generators to be \(X,Z\).

It remains to identify the right-hand side.  Since
\(L_r(S^Q+S)=S^{Q^r}+S\), expanding the definition of
\(\Phi_{d,Q}\) gives
\[
\Phi_{d,Q}(S^Q+S)
=\sum_{r=1}^{d-1}\bigl(
S^{Q^{r+1}+Q^r}+S^{Q^{r+1}+1}+S^{Q^r+1}+S^2\bigr).
\]
The middle two terms telescope, while the \(S^2\)-terms sum to zero
because \(d\) is odd.  Hence
\[
\Phi_{d,Q}(S^Q+S)=S^{q+1}+L_d(S^{Q+1}).
\]
Thus
\(S^{q+1}-A_\Delta(S^{Q+1})=\Phi_{d,Q}(B_V(S))\), and
Theorem~\ref{thm:wild_fixed_field} gives the fixed field and its
equation, proving part~\textup{(2)}.  Finally,
equation~\eqref{eq:gsx_wild_genus_value}, with
\((|V|,|\Delta|)=(Q,Q)\), gives part~\textup{(3)}.
\end{proof}

For \(Q=2\), \(N_2\simeq C_4\), so
Theorem~\ref{thm:char_two_closed_family} gives an explicit model for the
quotient by a cyclic subgroup of order four fixing \(P_\infty\); the
corresponding cyclic order-four quotients are treated in
\cite[Theorem~5.1]{GattiKorchmaros2024}.

\subsection{A unitary Heisenberg subgroup}
\label{subsec:nonabelian_heisenberg_specialization}

Let \(q=Q^d\), where \(Q\) is a power of \(p\) and
\(d=2s+1\ge3\).  Consider the following subgroup, which is the
one-dimensional unitary Heisenberg group in the coordinates used here
\cite[Section~2.1, equation~(2.1)]{ImaiTsushima2023}:
\[
 U_Q=\{\sigma_{b,c}:b,c\in\F_{Q^2},\ c^Q+c=b^{Q+1}\},\qquad
 \sigma_{b,c}(x)=x+b,\quad \sigma_{b,c}(y)=y+b^Qx+c.
\]

Since \(d\) is odd, the \(q\)-Frobenius restricts to the \(Q\)-Frobenius
on \(\F_{Q^2}\).  Put
\(\Delta=\{c\in\F_{Q^2}:c^Q+c=0\}\).  Then
\(\Delta\subseteq\Tcal\) and \(B_\Delta(T)=T^Q+T\).

\begin{proposition}
\label{prop:heisenberg_group_structure}
The set \(U_Q\) is a nonabelian subgroup of \(A_1(P_\infty)\) of order
\(Q^3\).  Its centre and commutator subgroup coincide and equal
\(\{\sigma_{0,c}:c^Q+c=0\}\), a group of order \(Q\).
\end{proposition}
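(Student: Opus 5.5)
The plan is to identify each \(\sigma_{b,c}\) with the triple \([1,b,c]\) and then read off everything from the group law in Lemma~\ref{lem:group_law}. Since \(d\) is odd, \(q\)-Frobenius restricts to \(Q\)-Frobenius on \(\F_{Q^2}\). Hence for \(b,c\in\F_{Q^2}\) we have \(b^q=b^Q\), \(c^q=c^Q\) and \(b^{q+1}=b^{Q+1}\). So the condition \(c^Q+c=b^{Q+1}\) is exactly \(c^q+c=b^{q+1}\), and \(\sigma_{b,c}\) is the element \([1,b,c]\in A_1(P_\infty)\), because its formula \(y\mapsto y+b^Qx+c=y+b^qx+c\) matches.

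First I show that \(U_Q\) is a subgroup. By Lemma~\ref{lem:group_law},
\[
[1,b_1,c_1][1,b_2,c_2]=[1,b_1+b_2,c_1+c_2+b_2^Qb_1]
\]
and \([1,b,c]^{-1}=[1,-b,b^{Q+1}-c]\). Both have coordinates in \(\F_{Q^2}\), and the products and inverses automatically satisfy the defining equation because they lie in \(A_1(P_\infty)\). So \(U_Q\) is closed and contains the identity.

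Next the order. For each \(b\in\F_{Q^2}\), the value \(b^{Q+1}\) lies in \(\F_Q\). The trace map \(c\mapsto c^Q+c\) from \(\F_{Q^2}\) onto \(\F_Q\) is surjective with kernel \(\Delta\), of order \(Q\). So each fibre has \(Q\) elements, and \(|U_Q|=Q^2\cdot Q=Q^3\).

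For the centre I mimic the proof of Lemma~\ref{lem:group_law}. The commutator is \([g,h]=[1,0,b'^Qb-b^Qb']\). If \([1,b,c]\) is central in \(U_Q\), take \(b'=1\) to get \(b\in\F_Q\). Then take \(b'\in\F_{Q^2}\setminus\F_Q\); for each such \(b'\) some \(c'\) exists by surjectivity of the trace. This gives \(b(b'^Q-b')=0\), so \(b=0\). Then \(c^Q+c=0\), i.e.\ \(c\in\Delta\). Conversely every \([1,0,c]\) with \(c\in\Delta\) is central, so \(Z(U_Q)=\{\sigma_{0,c}:c\in\Delta\}\), which has order \(Q\). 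Since this centre is proper, \(U_Q\) is nonabelian.

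For the commutator subgroup, the commutator formula shows that every commutator has the form \([1,0,t]\) with \(t=b'^Qb-b^Qb'\). Here \(t^Q=b'b^Q-bb'^Q=-t\), so \(t\in\Delta\), and \([U_Q,U_Q]\subseteq Z(U_Q)\). The reverse inclusion is the only step that needs care. The map \((b,b')\mapsto b'^Qb-b^Qb'\) is \(\F_Q\)-bilinear on \(\F_{Q^2}\). Fix \(b'=\omega\in\F_{Q^2}\setminus\F_Q\) and let \(b\) range over \(\F_Q\). The values are \(b(\omega^Q-\omega)\), and these exhaust \(\F_Q(\omega^Q-\omega)\). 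This set has \(Q\) elements inside \(\Delta\), which also has \(Q\) elements, so it equals \(\Delta\). Hence every element of the centre is a single commutator, and \([U_Q,U_Q]=Z(U_Q)\).
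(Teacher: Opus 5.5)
Your proof is correct and follows essentially the same route as the paper: you identify \(\sigma_{b,c}\) with \([1,b,c]\) via \(b^q=b^Q\) on \(\F_{Q^2}\), use the group law and commutator formula of Lemma~\ref{lem:group_law}, count fibres of the trace, and show by a cardinality comparison that every element of \(\Delta\) occurs as a single commutator. The differences are only cosmetic. The paper tests centrality against \(b'\notin b\F_Q\), and it realizes \(\Delta\) as the image of \(b'\mapsto b'^Q-b'\) with \(b=1\). You instead test against \(b'=1\) and then \(b'\notin\F_Q\), and you fix \(b'=\omega\notin\F_Q\) while varying \(b\in\F_Q\).
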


\begin{proof}
For \(\sigma_{b,c}\in U_Q\), the equalities \(b^q=b^Q\) and \(c^q=c^Q\)
show that \(\sigma_{b,c}\in A_1(P_\infty)\).  By
Lemma~\ref{lem:group_law}, \(U_Q\) is closed under multiplication and
inversion, and
\([\sigma_{b,c},\sigma_{b',c'}]=\sigma_{0,b'^Qb-b^Qb'}\).
Hence \(U_Q\) is a subgroup.  The trace map \(c\mapsto c^Q+c\) has kernel
of order \(Q\), so \(|U_Q|=Q^3\).
An element is central precisely when \(b=0\): for \(b\ne0\), choose
\(b'\notin b\F_Q\).  Finally, taking \(b=1\), the map \(b'\mapsto b'^Q-b'\)
has kernel \(\F_Q\) and image \(\Delta\), so every central element is a
commutator.
\end{proof}

Since
\(\sum_{i=0}^{d-1}(-1)^i(S^Q+S)^{Q^i}=S^{Q^d}+S=S^q+S\),
Proposition~\ref{prop:central_complementary_factor} gives the first identity
below; define \(\Fcal_{d,Q}\) by the second:
\begin{equation}
\label{eq:heisenberg_specialization_rhs}
 A_\Delta(T)=\sum_{i=0}^{d-1}(-1)^iT^{Q^i},\qquad
 \Fcal_{d,Q}(T)=-\sum_{0\le i\le j\le s-1}
 T^{Q^{2i}+Q^{2j+1}}.
\end{equation}
Put \(X=x^{Q^2}-x\) and \(Z=y^Q+y-x^{Q+1}\).

\begin{theorem}
\label{thm:heisenberg_specialization}
With \(A_\Delta\) and \(\Fcal_{d,Q}\) as in
\eqref{eq:heisenberg_specialization_rhs}, the fixed field of \(U_Q\) is
\[
        H^{U_Q}=\F_{q^2}(X,Z),\qquad
        A_\Delta(Z)=\Fcal_{d,Q}(X).
\]
This equation is absolutely irreducible and defines a smooth affine model.
Moreover, \(g(H^{U_Q})=\frac12Q^{d-2}(Q^{d-1}-1)\).
\end{theorem}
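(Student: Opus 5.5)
The plan is to apply Theorem~\ref{thm:wild_fixed_field} to \(U=U_Q\), using the freedom of Lemma~\ref{lem:wild_coordinate_freedom} to replace \(\Theta_U\) by a simpler correction polynomial. This follows the pattern of the proof of Theorem~\ref{thm:char_two_closed_family}.

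\emph{Step 1: the subgroup data.} By Proposition~\ref{prop:heisenberg_group_structure}, the translation space is \(V=\F_{Q^2}\) and the central space is the given \(\Delta\). For \(V\), every \(b\in\F_{Q^2}\) admits some \(c\) with \(c^Q+c=b^{Q+1}\), by surjectivity of the trace. For \(\Delta\), the elements with \(b=0\) are exactly those with \(c^Q+c=0\). Hence \(|V|=Q^2\), \(|\Delta|=Q\), \(B_V(T)=T^{Q^2}-T\) and \(B_\Delta(T)=T^Q+T\). Consequently \(X=x^{Q^2}-x=B_V(x)\). We use the section \(c_b=c\) with \(c^Q+c=b^{Q+1}\).

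\emph{Step 2: the correction polynomial.} Since \(d\) is odd, we have \(b^q=b^Q\) and \(b^{Q^2}=b\) for \(b\in\F_{Q^2}\). Then
\[
\lambda_b(S)=B_\Delta(b^QS+c_b)=bS^Q+b^QS+b^{Q+1}=(S+b)^{Q+1}-S^{Q+1}.
\]
So \(\Theta(S)=S^{Q+1}\) solves the difference equations of Proposition~\ref{prop:wild_invariant_coordinates}. Lemma~\ref{lem:wild_coordinate_freedom} then lets us use \(Z=B_\Delta(y)-x^{Q+1}=y^Q+y-x^{Q+1}\) as the second generator. Theorem~\ref{thm:wild_fixed_field} gives \(H^{U_Q}=\F_{q^2}(X,Z)\) with \(A_\Delta(Z)=\Psi(X)\), where \(\Psi\) is determined by
\[
S^{q+1}-A_\Delta(S^{Q+1})=\Psi(S^{Q^2}-S).
\]
The formula for \(A_\Delta\) is already recorded in \eqref{eq:heisenberg_specialization_rhs}.

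\emph{Step 3: identifying \(\Psi\) with \(\Fcal_{d,Q}\).} This is the main computation and the only real obstacle. I would expand \(\Fcal_{d,Q}(S^{Q^2}-S)\) as
\[
-\sum_{0\le i\le j\le s-1}\bigl(S^{Q^{2i+2}}-S^{Q^{2i}}\bigr)\bigl(S^{Q^{2j+3}}-S^{Q^{2j+1}}\bigr).
\]
The first step is to sum over \(i\) for fixed \(j\). This telescopes to \(S^{Q^{2j+2}}-S\), leaving
\[
-\sum_{j=0}^{s-1}\bigl(S^{Q^{2j+2}}-S\bigr)\bigl(S^{Q^{2j+3}}-S^{Q^{2j+1}}\bigr).
\]
Expanding each product gives four terms.

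The two cross terms \(S^{1+Q^{2j+3}}\) and \(S^{1+Q^{2j+1}}\) telescope over \(j\), contributing \(S^{1+Q^{2s+1}}-S^{1+Q}=S^{q+1}-S^{Q+1}\).

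The remaining terms are \(-S^{Q^{2j+2}+Q^{2j+3}}\) and \(+S^{Q^{2j+1}+Q^{2j+2}}\), that is, \(-(-1)^kS^{Q^k+Q^{k+1}}\) for \(1\le k\le d-1\). Together with \(-S^{Q+1}\), which is the \(k=0\) term, they assemble to \(-A_\Delta(S^{Q+1})\). Hence \(\Psi=\Fcal_{d,Q}\) by the uniqueness in Lemma~\ref{lem:translation_invariant_polynomials}.

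\emph{Step 4: geometry and genus.} Absolute irreducibility and smoothness follow from Corollary~\ref{cor:wild_affine_model_geometry}, transported through the coordinate change \((S,Y)\mapsto(S,Y-h(S))\) described after Lemma~\ref{lem:wild_coordinate_freedom}. The genus comes from \eqref{eq:gsx_wild_genus_value} with \(|V|=Q^2\) and \(|\Delta|=Q\):
\[
g=\frac{q(q-Q)}{2Q^3}=\frac12Q^{d-2}(Q^{d-1}-1).
\]
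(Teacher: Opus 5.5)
Your proposal is correct and follows essentially the same route as the paper: you identify \(V=\F_{Q^2}\) and \(\Delta\), observe that \(T^{Q+1}\) is a correction polynomial, and invoke Lemma~\ref{lem:wild_coordinate_freedom}. You then sum over \(i\) first and telescope to obtain \(x^{q+1}-A_\Delta(x^{Q+1})\), and take the geometry and genus from Corollary~\ref{cor:wild_affine_model_geometry} and \eqref{eq:gsx_wild_genus_value}; the only cosmetic difference is that you check the identity as a polynomial identity in \(S\) and appeal to uniqueness, whereas the paper substitutes \(x\) directly.
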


\begin{proof}
The translation and central spaces of \(U_Q\) are \(\F_{Q^2}\) and
\(\Delta\), so \(B_V(T)=T^{Q^2}-T\).  For a section
\(b\mapsto\sigma_{b,c_b}\),
\(B_\Delta(b^qT+c_b)=(T+b)^{Q+1}-T^{Q+1}\).  Thus \(T^{Q+1}\) is a
correction polynomial, and Lemma~\ref{lem:wild_coordinate_freedom} gives the
chosen coordinates
\(X,Z\).

It remains to compute the right-hand side.  Since \(X=x^{Q^2}-x\),
we have \(\sum_{i=0}^jX^{Q^{2i}}=x^{Q^{2j+2}}-x\) for
\(0\le j\le s-1\).  Hence
\[
 \Fcal_{d,Q}(X)=-\sum_{j=0}^{s-1}
   \bigl(x^{Q^{2j+3}}-x^{Q^{2j+1}}\bigr)
   \bigl(x^{Q^{2j+2}}-x\bigr).
\]
Expanding and telescoping give
\[
 \Fcal_{d,Q}(X)
 =x^{q+1}-\sum_{r=0}^{d-1}(-1)^r\bigl(x^{Q+1}\bigr)^{Q^r}
 =x^{q+1}-A_\Delta(x^{Q+1}).
\]
The fixed-field equation follows from Theorem~\ref{thm:wild_fixed_field};
Corollary~\ref{cor:wild_affine_model_geometry} and
Lemma~\ref{lem:wild_coordinate_freedom} give its geometric properties.
Finally, \eqref{eq:gsx_wild_genus_value}, with
\((|V|,|\Delta|)=(Q^2,Q)\), gives the genus.
\end{proof}

\section{A subfield not isomorphic to any Galois quotient of \texorpdfstring{\(H_{27}\)}{H27}}
\label{sec:non_galois_subcover}

Put \(k=\F_{27^2}=\F_{3^6}\), and write
\(H_{27}=k(x,y)\), where \(y^{27}+y=x^{28}\).  We use the \(R=0\) case of
Theorem~\ref{thm:odd_triangular_closed_model} to construct the genus-two
subfield in Theorem~C.

Since \(-2=1\) in characteristic \(3\), take \(c=\varrho=1\) in
Section~\ref{subsec:split_mixed_gsx_realization}.  Let \(E=\F_3\), and let
\(\Delta\subset k\) be the zero set of \(T^9-T^3+T\).  This polynomial is
separable and satisfies
\[
        (T^9-T^3+T)^3+(T^9-T^3+T)=T^{27}+T.
\]
Thus \(T^9-T^3+T\) divides \(T^{27}+T=B_{\Tcal}(T)\), so
\(\Delta\subseteq\Tcal\) and \(|\Delta|=9\).  The polynomials
\(B_E,B_\Delta,A_E,A_\Delta\) are
\[
\begin{aligned}
 B_E(T)&=T^3-T, & A_E(T)&=T^9+T^3+T,\\
 B_\Delta(T)&=T^9-T^3+T, & A_\Delta(T)&=T^3+T.
\end{aligned}
\]

Let \(G=G_{E,\Delta}\), and put
\(X=x^3-x\) and \(Z=B_\Delta(y+x^2)\).  By
Theorem~\ref{thm:odd_triangular_closed_model},
\[
        C:=H_{27}^G=k(X,Z),\qquad
        Z^3+Z=(X^9+X^3+X)^2,
\]
and \([H_{27}:C]=27\).  To make an order-four symmetry visible, use the
identity
\[
 (X^9+X^3+X)^2
 =(X^6+2X^4)^3+(X^6+2X^4)+2X^{10}+X^2.
\]
With \(Y=Z-X^6-2X^4\), this gives
\begin{equation}
\label{eq:q27_intermediate_model}
        C=k(X,Y),\qquad Y^3+Y=2X^{10}+X^2.
\end{equation}

\begin{proposition}
\label{prop:q27_hermitian_tower}
Choose \(i\in\F_9\subset k\) with \(i^2=-1\), and define
\(\sigma(X)=iX\) and \(\sigma(Y)=-Y\).  Then \(\sigma\) is an automorphism
of \(C\) of order \(4\).  If \(t=Y/X^2\) and \(v=X^4-t^3\), then
\[
        C^{\langle\sigma\rangle}=k(t,v),\qquad
        v^2=t^6-t+1.
\]
Consequently, \([C:k(t,v)]=4\).
\end{proposition}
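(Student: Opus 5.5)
The plan is to verify directly that \(\sigma\) preserves the defining equation \eqref{eq:q27_intermediate_model}, check that \(t\) and \(v\) are \(\sigma\)-invariant, derive the relation between them, and then use a degree count as in Theorem~\ref{thm:tame_fixed_field}.

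\emph{Step 1: \(\sigma\) is an automorphism of order \(4\).} By Theorem~\ref{thm:odd_triangular_closed_model}, the polynomial \(Y^3+Y-(2X^{10}+X^2)\) is the minimal polynomial of \(Y\) over \(k(X)\). The substitution \(X\mapsto iX\) is an automorphism of \(k(X)\). Since \(i^{10}=i^2=-1\),
\[
(-Y)^3+(-Y)=-(Y^3+Y),\qquad 2(iX)^{10}+(iX)^2=-(2X^{10}+X^2).
\]
So \(-Y\) is a root of the transported minimal polynomial, and \(\sigma\) extends to a \(k\)-automorphism of \(C\). We have \(\sigma^2(X)=-X\ne X\) and \(\sigma^4=1\), so \(\sigma\) has order exactly \(4\).

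\emph{Step 2: invariance of \(t\) and \(v\).} First, \(\sigma(t)=-Y/(i^2X^2)=t\). Next, \(\sigma(X^4)=i^4X^4=X^4\), so \(v=X^4-t^3\) is also fixed. Hence \(k(t,v)\subseteq C^{\langle\sigma\rangle}\).

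\emph{Step 3: the equation.} Put \(w=X^4\) and substitute \(Y=tX^2\) into \eqref{eq:q27_intermediate_model}. Dividing by \(X^2\) gives \(t^3w+t=2w^2+1\). Now substitute \(w=v+t^3\). In characteristic \(3\) we have \(4=1\), so the cross terms \(t^3v\) on both sides cancel. This leaves \(t^6+t=2v^2+2t^6+1\), which rearranges to \(v^2=t^6-t+1\), using \(2=-1\) in characteristic \(3\). This is the only computational step, and I expect it to be the main (though minor) obstacle, since the characteristic-\(3\) simplifications must be tracked carefully.

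\emph{Step 4: degree count.} We have \(X^4=w=v+t^3\in k(t,v)\) and \(Y=tX^2\). Therefore \(C=k(t,v)(X)\), and \(X\) is a root of \(S^4-(v+t^3)\) over \(k(t,v)\). This gives \([C:k(t,v)]\le4\). On the other hand, \([C:C^{\langle\sigma\rangle}]=4\) and \(k(t,v)\subseteq C^{\langle\sigma\rangle}\). Hence \(C^{\langle\sigma\rangle}=k(t,v)\) and \([C:k(t,v)]=4\).
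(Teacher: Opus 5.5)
Your proof is correct and matches the paper's argument step for step: check that \((X,Y)\mapsto(iX,-Y)\) negates both sides of the equation, verify \(\sigma(t)=t\) and \(\sigma(v)=v\), substitute \(Y=tX^2\) and \(X^4=v+t^3\), and compare \([C:k(t,v)]\le 4\) with \([C:C^{\langle\sigma\rangle}]=4\). Your extra step, that the cubic is the minimal polynomial of \(Y\) over \(k(X)\), is left implicit in the paper; Theorem~\ref{thm:odd_triangular_closed_model} alone does not state irreducibility, so justify it by the degree count \([C:k(X)]=[H_{27}:k(X)]/[H_{27}:C]=81/27=3\).
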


\begin{proof}
Since \(i^{10}=i^2=-1\), the substitution
\((X,Y)\mapsto(iX,-Y)\) multiplies both sides of
\eqref{eq:q27_intermediate_model} by \(-1\).  Hence \(\sigma\) is an
automorphism.  Moreover, \(\sigma^4=\mathrm{id}\), whereas
\(\sigma^2(X)=-X\ne X\), so \(\sigma\) has order \(4\).  The functions
\(t\) and \(v\) are fixed by \(\sigma\).

Substituting \(Y=tX^2\) into \eqref{eq:q27_intermediate_model} and dividing by
\(X^2\) gives \(t^3X^4+t=2X^8+1\).  Substituting \(X^4=v+t^3\) then gives
\(v^2=t^6-t+1\).

Finally, \(X^4=v+t^3\) and \(Y=tX^2\) give
\(C=k(t,v,X)\), so
\([C:k(t,v)]\le4=[C:C^{\langle\sigma\rangle}]\).  Together with
\(k(t,v)\subseteq C^{\langle\sigma\rangle}\), this proves
\(C^{\langle\sigma\rangle}=k(t,v)\) and \([C:k(t,v)]=4\).
\end{proof}

Put \(D=C^{\langle\sigma\rangle}=k(t,v)\).  Since \(X=x^3-x\),
\(Z=B_\Delta(y+x^2)\), and \(Y=Z-X^6-2X^4\), the inclusion
\(D\subset H_{27}\) is given explicitly by \(t=Y/X^2\) and
\(v=X^4-t^3\).

\begin{theorem}
\label{thm:covered_not_galois}
The function field \(D\) has genus \(2\) and is not isomorphic to
\(H_{27}^J\) for any subgroup \(J\le\Aut(H_{27})\).
The extension \(H_{27}/D\) is separable of degree \(108\) and is not Galois.
\end{theorem}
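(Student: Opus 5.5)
Three things need proof: the genus of \(D\), the degree and separability of \(H_{27}/D\), and that \(D\) is not a Galois quotient (whence \(H_{27}/D\) is not Galois).

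\emph{Genus and degree.} The curve \(v^2=t^6-t+1\) is hyperelliptic over \(k\) with \(p=3\ne2\). The sextic \(f(t)=t^6-t+1\) has \(f'(t)=-1\ne0\), so it is squarefree, and it has even degree \(6\). Hence \(D\) has genus \((6-2)/2=2\). The degree is \([H_{27}:D]=[H_{27}:C][C:D]=27\cdot4=108\) by Theorem~\ref{thm:odd_triangular_closed_model} and Proposition~\ref{prop:q27_hermitian_tower}. Separability holds because \(H_{27}/C\) is Galois and \(C/D\) is Galois of degree \(4\), prime to \(3\).

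\emph{\(D\) is not \(H_{27}^J\).} Suppose \(D\cong H_{27}^J\). The Hurwitz genus formula gives \(2g(H_{27})-2=|J|(2\cdot2-2)+\deg\mathrm{Diff}\), with \(g(H_{27})=27\cdot26/2=351\). Hence \(|J|\le 700/2=350\), and \(|J|<350\) unless the cover is unramified.

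Since \(\F_{q^2}\) is the full constant field, \(D\) is \(\F_{27^2}\)-maximal, and I would instead use the list of genera of quotients of \(H_q\). The plan is to invoke the known classification of subgroups of \(\PGU(3,q)\) together with the genus computations of Montanucci--Zini and Dalla Volta--Montanucci--Zini: the cited papers on the spectrum of genera of quotients of \(H_q\) should show that genus \(2\) does not occur for \(q=27\).

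To make this concrete, I would bound \(|J|\) and then eliminate cases.
\begin{itemize}
\item \(J\) fixing a point: by Corollary~\ref{cor:tame_genus} and formula \eqref{eq:gsx_wild_genus_value}, after conjugation into \(A(P_\infty)\), check that \(\frac{q-|\Delta|}{2m|\Delta|}\bigl(q/|V|-\ell+1\bigr)\) and \(q(q-|\Delta|)/(2|V||\Delta|)\) never equal \(2\) for \(q=27\). For example, with \(|\Delta|\in\{1,3,9,27\}\), the values of \((q-|\Delta|)/|\Delta|\) are \(26,8,2,0\), and one runs over the divisors \(m\mid 728\).
\item \(J\) with no fixed point: use Hurwitz with the orbit structure on the \(28\) rational points, or the classification via the maximal subgroups (stabilisers of a triangle, of a non-incident point and line, \(\mathrm{PGL}(2,q)\), and so on), again checking that genus \(2\) is impossible.
\end{itemize}

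Finally, if \(H_{27}/D\) were Galois with group \(J\le\Aut(H_{27})\), then \(D=H_{27}^J\), a contradiction. So the extension is not Galois.

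The main obstacle is the exhaustive exclusion of genus \(2\) over all subgroups \(J\le\PGU(3,27)\), in particular those not fixing a point. I would first try to cite the complete genus spectrum results. Failing that, I would combine the Hurwitz bound with the Riemann--Hurwitz count of fixed points on the \(28\cdot 27^2+1\) rational places, arguing modulo the subgroup classification.
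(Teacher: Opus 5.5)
Your proposal follows the paper's proof step for step: genus $2$ from the squarefree sextic $t^6-t+1$; degree $108=27\cdot 4$ and separability from the Galois tower $H_{27}\supset C\supset D$; non-isomorphism by citing the known genus spectrum of Galois quotients of $H_{27}$ (the paper uses \cite[Theorem~5.1]{MontanucciZini2017}, which excludes genus $2$); and non-Galoisness because otherwise $D=H_{27}^{\Gal(H_{27}/D)}$. Once that citation is in hand, your fallback case analysis is unnecessary; note also that $H_{27}$ has $27^3+1=19684$ rational places over $\F_{27^2}$, not $28\cdot 27^2+1$.
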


\begin{proof}
Since \((t^6-t+1)'=-1\), the polynomial \(t^6-t+1\) is squarefree of degree
six.  Hence the equation \(v^2=t^6-t+1\) gives \(g(D)=2\).  By
\cite[Theorem~5.1]{MontanucciZini2017}, the genus spectrum of the Galois
quotients of \(H_{27}\) does not contain \(2\).  Hence
\(D\not\cong H_{27}^J\) for every subgroup \(J\le\Aut(H_{27})\).

The extensions \(H_{27}/C\) and \(C/D\) are Galois of degrees \(27\) and
\(4\), respectively.  Therefore \(H_{27}/D\) is separable of degree
\(108\).  If it were Galois, then \(D\) would equal \(H_{27}^J\) for
\(J=\Gal(H_{27}/D)\), a contradiction.
\end{proof}

This completes the proof of Theorem~C.

\section*{Declaration of AI use}
The authors used ChatGPT (models 5.5, 5.6, and 6) to improve the language
and correct grammatical errors in this paper.  ChatGPT also assisted in
refining some details of the arguments; the core mathematical ideas were
developed by the authors.  The authors take full responsibility for all
of its content.

\raggedbottom

\end{document}